\newcommandx{\set}[2][2=\empty]{\{#1\ifx#2\empty\else\,|\,#2\fi\}}% set builder notation
\newcommand{\card}[1]{\lvert#1\rvert}% cardinality
\newcommand{\nats}{\mathbb{N}}% natural numbers
\newcommand{\ints}{\mathbb{Z}}% integers
\newcommand{\rats}{\mathbb{Q}}% rational numbers
\newcommand{\reals}{\mathbb{R}}% real numbers
\newcommand{\complex}{\mathbb{C}}% complex numbers
\newcommand{\finfield}{\mathbb{F}}% finite field
\newcommand{\abs}[1]{\lvert#1\rvert}% absolute value
\newcommand{\ceil}[1]{\lceil#1\rceil}% ceiling operator
\DeclareMathOperator{\rchar}{char}% ring characteristic
\DeclareMathOperator{\ord}{ord}% order of an element
\DeclareMathOperator{\tr}{tr}% trace
\DeclareMathOperator{\N}{N}% norm
\DeclareMathOperator{\id}{id}% identity
\DeclareMathOperator{\diag}{diag}% diagonal matrix
\DeclareMathOperator{\rk}{rk}% rank of a matrix
\DeclareMathOperator{\codim}{codim}% codimension
\newcommand{\rest}[1]{\left.#1\right\rvert}% restrict a map
\DeclareMathOperator{\im}{im}% image of a homomorphism
\newcommandx{\scprod}[2][2=\empty]{\langle#1\ifx#2\empty\else\,|\,#2\fi\rangle}% scalar product
\DeclareMathOperator{\sgn}{sgn}% sign function
\DeclareMathOperator{\supp}{supp}% support
\DeclareMathOperator{\End}{End}% endomorphisms of an object
\DeclareMathOperator{\rad}{rad}% radical
\newcommandx{\grp}[2][2=\empty]{{\langle#1\ifx#2\empty\else\,|\,#2\fi\rangle}}% group builder notation
\newcommandx{\gensubgrp}[2][2=\empty]{\langle#1\ifx#2\empty\else\,|\,#2\fi\rangle}% generated subgroup
\newcommandx{\gennorsubgrp}[2][2=\empty]{\langle\!\langle#1\ifx#2\empty\else\,|\,#2\fi\rangle\!\rangle}% generated normal subgroup
\newcommandx{\gensubsp}[2][2=\empty]{\langle#1\ifx#2\empty\else\,|\,#2\fi\rangle}% generated subgroup
\newcommandx{\gensubalg}[2][2=\empty]{\langle#1\ifx#2\empty\else\,|\,#2\fi\rangle}% generated subgroup
\DeclareMathOperator{\Cycgrp}{C}% cyclic group
\DeclareMathOperator{\Sym}{Sym}% symmetric group
\DeclareMathOperator{\Sgrp}{S}% symmetric group (\S used for section symbol)
\DeclareMathOperator{\GL}{GL}% general linear group
\DeclareMathOperator{\Sp}{Sp}% symplectic group
\DeclareMathOperator{\GO}{GO}% general orthogonal group
\DeclareMathOperator{\U}{U}% complex unitary group
\DeclareMathOperator{\GU}{GU}% general unitary group
\DeclareMathOperator{\SL}{SL}% special linear group
\DeclareMathOperator{\SU}{SU}% special unitary group
\DeclareMathOperator{\PSL}{PSL}% projective special linear group
\DeclareMathOperator{\UT}{UT}% group of upper unitriangular matrices
\newcommand{\freegrp}{\mathbf{F}}% free group
\DeclareMathOperator{\Aut}{Aut}% group of automorphisms of an object
\DeclareMathOperator{\Cay}{Cay}% Cayley graph of a group with subset of elements
\newcommand{\Proj}{\mathbb{P}}% projective space
\theoremstyle{plain}
\newtheorem{lemma}{Lemma}
\newtheorem{theorem}{Theorem}
\newtheorem{corollary}{Corollary}
\newtheorem{proposition}{Proposition}
\newtheorem{fact}{Fact}
\theoremstyle{definition}
\newtheorem{remark}{Remark}
\title[Word images are dense]{Word images in symmetric and classical groups\\ of Lie type are dense}
\author{Jakob Schneider}
\address{J.~Schneider, TU Dresden, 01062 Dresden, Germany}
\email{jakob.schneider@tu-dresden.de}
\author{Andreas Thom}
\address{A.~Thom, TU Dresden, 01062 Dresden, Germany}
\email{andreas.thom@tu-dresden.de}
\begin{document}
\begin{abstract}
	Let $w\in\freegrp_k$ be a non-trivial word and denote by $w(G)\subseteq G$ the image of the associated word map $w\colon G^k\to G$. Let $G$ be one of the finite groups $\Sgrp_n,\GL_n(q),\Sp_{2n'}(q),\GO_{2n'}^\pm(q),\GO_{2n'+1}(q),\GU_n(q)$ ($q$ a prime power, $n\geq 2$, $n'\geq 1$), or the unitary group $\U_n$ over $\complex$. Let $d_G$ be the normalized Hamming metric resp.\ the normalized rank metric on $G$ when $G$ is a symmetric group resp.\ one of the other classical groups and write $n(G)$ for the degree of $G$.

	For $\varepsilon>0$, we prove that there exists an integer $N(\varepsilon,w)$ such that $w(G)$ is $\varepsilon$-dense in $G$ with respect to the metric $d_G$ if $n(G)\geq N(\varepsilon,w)$. This confirms metric versions of conjectures by Shalev and Larsen.
	Equivalently, we prove that any non-trivial word map is surjective on a metric ultraproduct of groups $G$ from above such that $n(G)\to\infty$ along the ultrafilter.

	As a consequence of our methods, we also obtain an alternative proof of the result of Hui, Larsen, and Shalev that $w_1(\SU_n)w_2(\SU_n)=\SU_n$ for non-trivial words $w_1,w_2\in\freegrp_k$ and $n$ sufficiently large.
\end{abstract}

\maketitle

\tableofcontents

\section{Introduction}
 
Recently, there has been increasing interest in word maps on finite, algebraic, and topological groups \cite{avnigelanderkassabovshalev2013word,bandmangariongrunewald2012surjectivity,elkasapythom2014goto,garionshalev2009commutator,gordeevkunyavskiiplotkin2016word,gordeevkunyavskiiplotkin2018word,guralnickliebeckobrienshalevtiep2018surjective,guralnicktiep2015effective,huilarsenshalev2015waring,klyachkothom2017new,larsen2004word,larsenshalev2009word,larsenshalev2016distribution,larsenshalev2017words,larsenshalevtiep2012waring,lubotzky2014images}. Recall that, for a word $w\in\freegrp_k$, where $\freegrp_k$ denotes the free group of rank $k$ freely generated by $x_1,\ldots,x_k$,  and a group $G$, the symbol $w(g_1,\ldots,g_k)$ denotes the evaluation at $w$ of the homomorphism $\freegrp_k\to G$ which is defined by $x_i\mapsto g_i$ for $i=1,\ldots,k$. We call the map $G^k\to G$ which sends $(g_1,\ldots,g_k)\in G^k$ to $w(g_1,\ldots,g_k)\in G$ the word map associated to $w$ and write $w(G)\subseteq G$ for its image. 

Subsequently, fix a non-trivial word $w\in\freegrp_k$. For a fixed finite group $G$ the word image $w(G)$ can just be $\set{1_G}$ if $w$ is a law for $G$ -- when $G$ is a finite simple group, the possible word images were characterized by Lubotzky in \cite{lubotzky2014images}. In analogy, for a fixed compact group $G$, the word image $w(G)$ can be contained in any neighborhood of the identity as was proved by the second author in \cite[Corollary~1.2]{thom2013convergent}. However, examples show that for fixed $w$ and a family $\mathcal G$ of finite simple groups resp.\ compact connected simple Lie groups, for $G\in\mathcal G$, one should expect $w(G)$ to be \emph{large} in $G$ if the order resp.\ dimension or rank of $G$ is large.

There are two intriguing conjectures regarding this observation.
Letting $\mathcal G$ be the class of finite non-abelian simple groups, Shalev conjectured \cite[Conjecture 8.3]{bandmangariongrunewald2012surjectivity} that if $w$ is not a proper power, the associated word map on $G$ is surjective if the order of $G$ is sufficiently large. 
Similarly, if $\mathcal G$ is the class of simple connected compact groups, Larsen conjectured at the 2008 Meeting of the AMS in Bloomington that $w$ is surjective on $G\in\mathcal G$ if the rank of $G$ is sufficiently large.

Shalev's conjecture was disproved for groups of type $\PSL_2(q)$ in \cite{jamborliebeckobrien2013some} using trace polynomials, however it remains plausible that such word maps are surjective once the rank is large enough (as conjectured in \cite[Conjecture~4.6]{liebeck2015width}). Remarkably, Lyndon proved this for infinite symmetric groups, see~\cite{doughertymycielski1999representations}. A weak form of Larsen's conjecture (surjectivity on $\SU_n$ for infinitely many $n \in \mathbb N$) was proved by Elkasapy and the second author in \cite{elkasapythom2014goto} for all words $w\in\freegrp_2\setminus [[\freegrp_2,\freegrp_2],[\freegrp_2,\freegrp_2]]$. 

In this note, we prove that metric versions of Shalev's and Larsen's conjectures are true. Let us explain what we mean by this.
For $\varepsilon>0$ and a subset $Y\subseteq X$ of a metric space $(X,d)$, say $Y$ is $\varepsilon$-dense in $X$ if $d(x,Y)\coloneqq\inf_{y\in Y}{d(x,y)}\leq\varepsilon$ for all $x\in X$.
Throughout let $\Sgrp_n=\Sym(\underline{n})$ denote the symmetric group acting on the set $\underline{n}=\set{1,\dots,n}$. Denote by $d_{\rm H}\colon \Sgrp_n\times \Sgrp_n\to[0,1]$ the normalized Hamming metric, i.e.,
$$
d_{\rm H}(\sigma,\tau)\coloneqq\frac{1}{n}\card{\set{x\in\underline{n}}[x.\sigma\neq x.\tau]}
$$
for $\sigma,\tau\in \Sgrp_n$. 
The following metric analog of Shalev's conjecture holds for symmetric groups.

\begin{theorem}\label{thm:main_sym_grps}
Let $w\in\freegrp_k$ be a non-trivial word and $\varepsilon>0$. There exists an integer $N(\varepsilon,w)$ such that $w(\Sgrp_n)$ is $\varepsilon$-dense in $\Sgrp_n$ with respect to the normalized Hamming metric if $n \geq N(\varepsilon,w)$.
\end{theorem}

For a classical group $G$ of Lie type write $n=n(G)$ for the dimension of its natural module (i.e., its degree). Also define the normalized rank metric $d_{\rk}\colon G\times G\to[0,1]$ on $G$ by $d_{\rk}(g,h)\coloneqq\rk(g-h)/n$ for $g,h\in G$.
In analogy to Theorem~\ref{thm:main_sym_grps}, we then have the following.

\begin{theorem}\label{thm:main_thm_fin_grp_lt}
	Let $w\in\freegrp_k$ be a non-trivial word and $\varepsilon>0$. Let $G$ be one of the groups $\GL_n(q)$, $\Sp_{2n'}(q)$, $\GO_{2n'+1}(q)$, $\GO_{2n'}^{\pm}(q)$, or $\GU_n(q)$ ($q$ a prime power, $n\geq 2$, $n'\geq 1$). There exists an integer $N(\varepsilon,w)$ such that $w(G)$ is $\varepsilon$-dense in $G$ with respect to the normalized rank metric if $n=n(G)\geq N(\varepsilon,w)$.
\end{theorem}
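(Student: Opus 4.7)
The plan is to reduce the approximation of an arbitrary $g \in G$ to a collection of ``single-Galois-degree'' blocks via an orthogonal primary decomposition, then dispatch each block with a key lemma and assemble by direct product. I would first decompose the natural module $V$ of $G$ according to the rational canonical form of $g$: grouping primary components by the degree $d$ of their associated irreducible polynomial over $\mathbb{F}_q$ (and pairing with Frobenius-dual polynomials of the same degree for $\Sp,\GO,\GU$), I obtain a $g$-invariant, form-orthogonal decomposition $V=\bigoplus_d W_d$, with $g=\bigoplus_d g_d$ and a block-diagonal subgroup $\prod_d G(W_d) \le G$. Since $w(\prod_d G(W_d)) = \prod_d w(G(W_d))$, it suffices to approximate each $g_d$ in $G(W_d)$ under its own normalized rank metric.

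Next I truncate. Let $N' = N'(\varepsilon/2,w)$ be the threshold from the key lemma below, and set $M = 2N'$ to absorb the form-dependent ratio between $\dim W_d$ and the Lie rank of $G(W_d)$. Call $W_d$ \emph{small} if $d \le M$ and $\dim W_d \le M$; since every nonzero $W_d$ satisfies $\dim W_d \ge d$, small blocks only occur for $d \le M$, so there are at most $M$ of them, each of dimension $\le M$, contributing at most $M^2$ total dimension. For $n \ge 2M^2/\varepsilon$, replacing $g_d$ by the identity on each small $W_d$ therefore costs at most $\varepsilon/2$ in $d_{\rk}$. Every remaining (\emph{large}) block then satisfies $n(G(W_d)) \ge N'$, and I invoke the following \emph{key lemma}: for every non-trivial $w \in \freegrp_k$ and every $\varepsilon'>0$ there exists $N'(\varepsilon',w)$ such that for every classical $G$ of the above type with $n(G) \ge N'$, every $g \in G$ whose minimal polynomial has all irreducible factors of a common degree admits some $h \in w(G)$ with $d_{\rk}(g,h) \le \varepsilon'$. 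Applying this with $\varepsilon' = \varepsilon/2$ yields $h_d \in w(G(W_d))$ with $d_{\rk}(g_d,h_d) \le \varepsilon/2$ for each large $W_d$. Because $d_{\rk}$ on $G$ is the convex combination of the $d_{\rk}$ on the factors weighted by $\dim W_d/n$, the assembled $h = \bigoplus_d h_d \in w(G)$ satisfies $d_{\rk}(g,h) \le \varepsilon$.

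The main obstacle is the key lemma itself. In the single-degree setting the $\mathbb{F}_q$-algebra $\mathbb{F}_q[g_s]$ generated by the semisimple part is a product of copies of $\mathbb{F}_{q^d}$, the centralizer of $g_s$ in $G$ is a smaller classical group over $\mathbb{F}_{q^d}$, and $g$ is close in rank metric to a central element of this centralizer. The bulk of the work is to realize such centralizer-central elements as approximate word values in $G$; I would attempt this by combining a Borel-type dominance statement for the word map on the ambient algebraic group (so that $w(G)$ meets every relevant character fiber on a maximal torus over $\mathbb{F}_{q^d}$) with a multiplicity-absorption argument that exploits the large Lie rank to confine the remaining discrepancy between $g_d$ and the constructed word value to a subspace of codimension at most $\varepsilon' n(G)$. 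Unipotent contributions are handled by the same truncation mechanism applied to the non-fixed space of the unipotent Jordan part, since that space is $g$-invariant and orthogonal-compatible inside the relevant primary block.
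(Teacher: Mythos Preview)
Your reduction to a ``key lemma'' via block decomposition and truncation is reasonable in spirit, but the proof sketch of the key lemma contains a genuine gap. You assert that in the single-degree setting $g$ is close in rank metric to a central element of $C_G(g_s)$; this is false. Take $d=1$ and $g$ a single unipotent Jordan block of size $n$: then $g_s=1$, the only central elements of $C_G(g_s)=G$ are scalars, and $\rk(g-\lambda\cdot\id)\geq n-1$ for every scalar $\lambda$. Your proposed remedy, truncating the non-fixed space of the unipotent part, fails for the same reason: that space can have dimension $n-1$, so it cannot be discarded at cost $\varepsilon/2$. More generally, whenever the primary decomposition of $g$ contains large Jordan blocks (nilpotent parts of high rank), neither ``close to semisimple'' nor ``unipotent part is small'' holds, and your scheme produces no approximant.

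Even on the semisimple side, the appeal to a ``Borel-type dominance'' statement does not do the work you need. Dominance tells you that $w$ hits a Zariski-dense subset of a torus, but to approximate a \emph{specified} element $g$ in rank metric you must realize prescribed eigenvalues with prescribed (large) multiplicities; dominance over $\overline{\finfield}_q$ gives no control over multiplicities or over $\finfield_q$-rationality with the required uniformity in $n$ and $q$. The paper's proof avoids this entirely: it does not pass through $C_G(g_s)$ or Borel dominance. Instead it writes $g$ in Frobenius normal form $\bigoplus_k F(\chi_k)^{\oplus c_k}$ (grouped by degree $k$ of the invariant factor, not of the irreducible factor), and for each isotypic piece uses two explicit constructions: for small $k$ it works inside the monomial subgroup $(k[X]/(\chi(X^c)))^\times\wr{\rm S}_r$ and a cohomological estimate (Corollary~\ref{cor:bd_dim_rt_idl_ints}) to hit $F(\chi)^{\oplus cr}$ up to controlled error; for large $k$ it replaces $F(\chi)$ by the permutation matrix of a $k$-cycle at rank cost $1/k$ and invokes Theorem~\ref{thm:main_sym_grps}. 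The form-preserving cases then require an additional analysis of orthogonally indecomposable modules and a construction of almost-invariant totally isotropic subspaces (Lemma~\ref{lem:alm_inv_tot_sing_subsp_approx}). None of this is captured by your centralizer-plus-dominance outline, and the unipotent case in particular needs the large-$k$ argument rather than truncation.
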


Write $\U_n$ for the unitary group (over the complex numbers) of degree $n\in\ints_+$. Define the normalized rank metric on $\U_n$ as above. We will also prove the following metric version of Larsen's conjecture.

\begin{theorem}\label{thm:main_un_grps}
	Let $w\in\freegrp_k$ be a non-trivial word and $\varepsilon>0$.
	There exists an integer $N(\varepsilon,w)$ such that $w(\U_n)$ is $\varepsilon$-dense in $\U_n$ with respect to the normalized rank metric for all $n\geq N(\varepsilon,w)$.
\end{theorem}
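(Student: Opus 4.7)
The plan is to transfer the argument used for the finite unitary groups $\GU_n(q)$ in Theorem~\ref{thm:main_thm_fin_grp_lt} into the continuous setting of $\U_n$. Structurally, $\U_n$ and $\GU_n(q)$ are both unitary groups preserving a Hermitian form, and the normalized rank metric has the same geometric meaning in both cases (codimension of the subspace on which two operators agree), so the arguments should transfer almost verbatim, with finite-field parameter choices replaced by continuous choices in the circle $\U_1$ and counting arguments replaced by Haar-measure considerations.

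First, using conjugation invariance of $w(\U_n)$ and the spectral theorem for unitary matrices, one reduces to $\varepsilon$-approximating any diagonal target $u = \diag(\lambda_1, \ldots, \lambda_n)$ by an element of $w(\U_n)$ in rank metric. Fix a block size $\ell = \ell(w, \varepsilon)$, decompose $\complex^n = \bigoplus_{i=1}^N V_i \oplus V'$ with $\dim V_i = \ell$ and $\dim V' < \ell$, and look for block-diagonal $a_j = \bigoplus_i a_j^{(i)} \oplus a_j'$; then $w(a_1, \ldots, a_k) = \bigoplus_i w(a_1^{(i)}, \ldots, a_k^{(i)}) \oplus w(a_1', \ldots, a_k')$. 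The problem reduces to showing that, per block, the restriction $u|_{V_i}$ can be approximated within rank defect $\delta < \varepsilon/2$ by an element of $w(\U_\ell)$; summing over the $N \approx n/\ell$ blocks and adding the boundary contribution from $V'$ gives a total rank defect at most $\delta + \ell/n < \varepsilon$ for $n$ large enough.

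The per-block realization splits into two cases. If the abelianization of $w$ is non-trivial, a diagonal choice of the $a_j^{(i)}$ reduces the problem to a multiplicative equation $\prod_j \mu_j^{e_j} = \lambda$ on $\U_1$ with at least one exponent $e_j$ nonzero, which is trivially solvable. The main obstacle is the case $w \in [\freegrp_k, \freegrp_k]$: here all-diagonal constructions yield only the identity, so one needs a genuinely non-abelian construction inside each block. The strategy is to exploit non-triviality of $w$ on the free group to produce non-central elements of $w(\U_\ell)$ with controlled spectrum (as in Thom~\cite{thom2013convergent} and Elkasapy--Thom~\cite{elkasapythom2014goto}), then to combine their conjugates and commutators across sub-blocks to approximate arbitrary diagonal targets. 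This step mirrors the analogous treatment in the $\GU_n(q)$ case of Theorem~\ref{thm:main_thm_fin_grp_lt}, the only real change being that counting estimates over $\finfield_{q^2}$ are replaced by parameter choices governed by Haar measure on $\U_\ell$.
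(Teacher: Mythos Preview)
Your proposal has a genuine gap at the decisive point. The block decomposition and the case $w\notin[\freegrp_k,\freegrp_k]$ are correct but elementary; the entire content of the theorem lies in the case $w\in[\freegrp_k,\freegrp_k]$, and here you offer only a sketch: ``produce non-central elements of $w(\U_\ell)$ with controlled spectrum \ldots\ then combine their conjugates and commutators across sub-blocks to approximate arbitrary diagonal targets.'' This is not an argument. First, $w(\U_\ell)$ is a union of conjugacy classes but is \emph{not} closed under products or commutators, so ``combining'' word values does not produce new word values. Second, the Elkasapy--Thom method you cite applies only to words $w\notin\freegrp_2''=[[\freegrp_2,\freegrp_2],[\freegrp_2,\freegrp_2]]$; extending beyond that restriction is precisely the new contribution here. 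Third, your block reduction merely restates the theorem for a single fixed $\ell$: if you could show $w(\U_\ell)$ is $\varepsilon/2$-dense for some $\ell=\ell(w,\varepsilon)$, you would already be done, so nothing has been reduced. Finally, appealing to the $\GU_n(q)$ case of Theorem~\ref{thm:main_thm_fin_grp_lt} as a template is circular in this paper's logic: Section~\ref{sec:fin_grps_lt} is proved \emph{after} Section~\ref{sec:un_grps} and explicitly reuses the method developed for $\U_n$.

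The paper's actual proof supplies exactly the missing mechanism. It works inside the monomial subgroup $\U_1\wr {\rm S}_n\subseteq\U_n$ and interprets a choice of diagonal parts for $M_g,M_h$ as a real $1$-cochain on the quotient of the Cayley complex of $K=\freegrp_2/\gennorsubgrp{w}$ by a finite quotient $\pi\colon K\twoheadrightarrow G$; the rank defect of $w(M_g,M_h)$ against a prescribed diagonal target is then bounded by $\dim H^2(X(\pi),\reals)/|G|$ (Lemma~\ref{lem:cohom_lem}). The point is to choose $G$ so that this normalized second Betti number is small: one takes $G=H(p)$, a finite $p$-quotient of the free nilpotent group $H=\freegrp_2/\gamma_{c+1}(\freegrp_2)$ (where $w\in\gamma_{c+1}(\freegrp_2)\setminus\gamma_{c+2}(\freegrp_2)$) obtained via Jennings' embedding into $\UT_d(\ints)$, and bounds the image of $d^2(\pi(p))$ from below by estimating the dimension of a principal right ideal in $k[H(p)]$ (Lemma~\ref{lem:bd_dim_rt_idl}). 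This gives $\varepsilon(\pi(p))\leq hf/p$, and a short induction using Bertrand's postulate patches these $p^h$-blocks together. None of this cohomological structure --- the wreath-product ansatz, the Cayley complex, the nilpotent quotients, or the group-ring ideal estimate --- appears in your outline, and it is not replaceable by the vague ``conjugates and commutators'' step.
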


First of all, note that in the metric context there is no notable difference between ${\rm A}_n$ and $\Sgrp_n$, $\GL_n(q)$ and $\SL_n(q)$, $\GO_{2n'+1}(q)$ resp.\ $\GO_{2n'}^\pm(q)$ and $\Omega_{2n'+1}$ resp.\ $\Omega^{\pm}_{2n'}(q)$, $\GU_n(q)$ and $\SU_n(q)$, and similarly between $\U_n$ and $\SU_n$ when $n$ is large, so that we are essentially talking about families of quasisimple compact groups. We believe that analogous results hold for other families of compact Lie groups of increasing rank. Also note that density with respect to the normalized rank metric implies density with respect to the normalized Hilbert--Schmidt metric on $\U_n$ -- this was also unknown to the best of our knowledge.

\vspace{0.1cm}

Let us now say some words about the proofs of Theorem~\ref{thm:main_sym_grps},~\ref{thm:main_thm_fin_grp_lt}, and~\ref{thm:main_un_grps}. First observe that it suffices to prove both results for $k=2$. Indeed, if $w\in\freegrp_k$ for $k\geq 2$, then, via a suitable embedding $\freegrp_k\subseteq\freegrp_2=\gensubgrp{x,y}$, we can view $w$ as a non-trivial word in the two variables $x,y$. Hence we shall restrict to the case $w\in\freegrp_2=\gensubgrp{x,y}$. We may also assume that $w$ is cyclically reduced, since $w(G)$ is a characteristic subset for any group $G$, so that, up to a change of variables, $w=x^a$ or $w=x^{a_1}y^{b_1}\cdots x^{a_ l}y^{b_l}$ for some $l\in\ints_+$.

Now let $(G,d_G)$ be one of the groups from Theorem~\ref{thm:main_sym_grps},~\ref{thm:main_thm_fin_grp_lt}, or~\ref{thm:main_un_grps} together with the corresponding metric. Write $n=n(G)$ for its degree.
In all three cases, instead of proving the existence of $N(w,\varepsilon)$, we will prove the equivalent statement (including the corresponding quantitative bounds for $N(w,\varepsilon)$ mentioned below) that there exists a function $d\colon [0,1]\to\reals$ of type $d(x)=Cx^{1/e}$, where $e=e(w)\geq 1$ only depends on $w$ and $C>0$ depends on the choice of $e$, such that $d_G(g,w(G))\leq d(1/n)$ for all $g\in G$. This just means that $N(w,\varepsilon)=O_w((1/\varepsilon)^{e(w)})$, where now the implied constant in the $O$ notation may still depend on $w$.

\vspace{0.1cm}

We start out by giving a brief outline of the proofs of Theorem~\ref{thm:main_sym_grps},~\ref{thm:main_thm_fin_grp_lt}, and \ref{thm:main_un_grps}. For the convenience of the reader, we will prove Theorem~\ref{thm:main_un_grps} before proving Theorem~\ref{thm:main_thm_fin_grp_lt}, as their proofs follow the same idea, but the details of the latter are more involved.

We start with Theorem~\ref{thm:main_sym_grps}. First of all, we need to fix some notation. We say that an element $\sigma\in \Sgrp_n$ is of \emph{cycle type} $(k^{c_k})_{k\in\ints_+}$ if it has exactly $c_k$ cycles of length $k$. Then $\sigma$ is called $k$-\emph{isotypic} if it has only cycles of length $k$, i.e., $n=c_k k$. We call $\sigma$ \emph{isotypic} if it is $k$-isotypic for some $k\in\ints_+$. 
To prove Theorem~\ref{thm:main_sym_grps} we first settle the case where $\sigma$ is isotypic using the cycle structure of elements of $\PSL_2(q)$ acting on the projective line $L_q$ (see Subsections~\ref{subsec:cyc_str_el_psl_2} and \ref{subsec:eff_surj_tr}), and then deduce the general case using an application of Jensen's inequality and the fact that a permutation $\sigma\in \Sgrp_n$ has less than $\sqrt{2n}$ distinct cycle types (see Subsection~\ref{subsec:prf_mn_thm_sym_grps}).
Note that the idea used in the proof of partitioning the set $\underline{n}$ into copies of projective lines $L_q$ and letting copies of groups $\PSL_2(q)$ act on them already appears in \cite[Proof of Proposition~8]{larsen2004word}.		
In this case, one needs a number theoretic result by Linnik \cite{linnik1944least} to prove the existence of the constant $e=e(w)\geq 1$. However, the qualitative statement of Theorem~\ref{thm:main_sym_grps} remains true without this assumption. Assuming a conjecture of Chowla \cite{chowla1934} one can show that any $e>2(l+1)$ works. 

We proceed by giving an overview of the proof of Theorem~\ref{thm:main_un_grps}. The proof of the results in \cite{elkasapythom2014goto} relied on the analysis of a certain algebraic condition on the abelianized Fox derivative of $w$ and our new strategy is a generalization of this -- see Subsection~\ref{subsec:con_rmk} for details. We use monomial matrices and draw a connection to the normalized dimension of the second cohomology group of finite quotients of the Cayley complex of the one-relator group $K=\freegrp_2/\gennorsubgrp{w}$. 
As an intermediate step, we consider the largest free nilpotent group $H$ which is a quotient of $K$, i.e., when $c=c(w)\geq 0$ is determined by $w\in\gamma_{c+1}(\freegrp_2)\setminus\gamma_{c+2}(\freegrp_2)$, where $(\gamma_i(L))_{i\geq 1}$ denotes the lower central series of the group $L$,
then $H=\freegrp_2/\gamma_{c+1}(\freegrp_2)$. Using Jennings' embedding theorem, for all sufficiently large primes $p$, we find arbitrary large finite $p$-groups $H(p)$ of composition length $h=h(w)$ equal to the Hirsch length of $H$, which are quotients of $K$ and where the above normalized dimension gets arbitrarily small. A quantitative analysis then reveals that one can take any number greater than $h(w)$ for the exponent $e=e(w)$. In the worst case we get that $c \leq 2l$ by a result of Fox \cite{fox1953free} and then $h(w)\leq\sum_{k=1}^c 2^k<2^{2l+1}$.

In Subsection~\ref{subsec:furth_impl}, we point out that our method of proof together with a fact on the linearized permutation representation of $\Sgrp_n$ (see Lemma~\ref{lem:bd_wdth_perm_rep}) implies as well that $w_1(\SU_n)w_2(\SU_n)=\SU_n$ for non-trivial words $w_1,w_2\in\freegrp_k$ and large $n$, providing an alternative proof for \cite[Theorem~2.3]{huilarsenshalev2015waring}. However, it still remains unclear how to prove surjectivity of single words $w$ in general.

Finally, in Section~\ref{sec:fin_grps_lt} we use the same cohomological method of Section~\ref{thm:main_un_grps} but with coefficient groups $k[X]/(\chi)^\times$ for $\chi\in k[X]$ a polynomial instead of the circle $\U_1$ and a modified version of Lemma~\ref{lem:bd_dim_rt_idl} (namely Corollary~\ref{cor:bd_dim_rt_idl_ints}) to settle Theorem~\ref{thm:main_thm_fin_grp_lt}. We remark here that our proof for $\GL_n(k)$ works for all fields $k$ (not only finite ones) and we conjecture that the same is true for the other Lie types.

After finishing a first version of this text, we noted that there is an alternative root to the proof of Theorem~\ref{thm:main_sym_grps}, which uses the ideas from Section~\ref{sec:un_grps} and~\ref{sec:fin_grps_lt}, but with finite cyclic groups $\Cycgrp_k$ instead of coefficients in $\U_1$. It can be found in Subsection~3.4.3 of \cite{schneider2019phd}.

\vspace{0.1cm}

We do not want to say much about metric ultraproducts. However, the statement that $w$ has dense word image on a suitable class of groups carrying bi-invariant metrics (as in Theorems~\ref{thm:main_sym_grps},~\ref{thm:main_thm_fin_grp_lt}, and~\ref{thm:main_un_grps}) is equivalent to surjectivity of $w$ on metric ultraproducts of those groups, where $n(G)\to\infty$ along the ultrafilter. 
Forming such a metric ultraproduct of symmetric groups leads to a so-called universal sofic group, whereas for complex unitary groups we obtain a group which surjects continuously on a universal hyperlinear group. For more on this subject, see Pestov's survey \cite{pestov2008hyperlinear} and the references therein or \cite{nikolovschneiderthom2017some, thomwilson2018some}.

\vspace{0.1cm}

Let us end by drawing some connections to related questions and other articles. To prove that the cardinality of the word image $w(G)$ for $G$ a quasisimple group is large (which was done in \cite[Theorem~2]{larsen2004word} and \cite[Theorems~1.9 and~1.11]{larsenshalev2009word}), Shalev and Larsen approximate an element which has a logarithmically large conjugacy class (e.g., in $\Sgrp_n$ an $n$-cycle and in a classical group of Lie type an element admitting a cyclic vector) and exploit that the cardinality of conjugacy classes is continuous in the normalized Hamming metric resp.\ the normalized rank metric, i.e., $\card{g^G}/\card{h^G}\leq\card{(gh^{-1})^G}$ which is bounded by $\card{G}^{Ld_{\rm H}(g,h)}$ resp.\ $\card{G}^{Ld_{\rk}(g,h)}$ for some constant $L>0$ (this is used, e.g., in \cite{liebeckshalev2001diameters}; see also~\cite[Corollary~2.14 and Theorem~2.15]{stolzthom2014lattice}). Hence metric density also implies that $\log_{\card{G}}\card{w(G)}\to 1$ when $n(G)\to\infty$. In private communication with Shalev, he conjectured, because of the above connection, that the word image $w(\Sgrp_n)$ is actually $C/n$-dense, for a fixed constant $C>0$, as indicated by \cite[Theorem~1.9]{larsenshalev2009word} stating that $\card{w(\Sgrp_n)}\geq n^{-4-\varepsilon}n!$ for $n$ sufficiently large. To prove the latter fact, it is enough to find one conjugacy class in the image $w(\Sgrp_n)$ which comes from an element being $C$-close to an $n$-cycle, as such a class has a centralizer of order polynomially bounded in $n$. However, in \cite{larsen1997often} it is shown that for power words the even better estimate $\card{w(\Sgrp_n)}\geq Cn^{-1}n!$ holds, but Proposition~\ref{prop:pwr_wrds_est} of Section~\ref{subsec:prf_mn_thm_sym_grps} demonstrates, that though the word image misses some large $\varepsilon$-balls, i.e., $\varepsilon=\Omega(1/\sqrt{n})$. Hence the word image can be distributed quite non-uniformly in the metric sense.

Similarly, using results of \cite{liebeckshalev2001diameters} or \cite{dowerkthom2019bounded}, if one can approximate a conjugacy class of large cardinality resp.\ norm, one gets bounded width of the word image. In any case, maybe both questions about cardinality and width of the word image have the same simple answer, namely that $w$ is eventually surjective (when $w$ is not a proper power in the case of finite simple groups).

The rest of this article is structured as follows. In Section~\ref{sec:sym_grps}, we give the proof of Theorem~\ref{thm:main_sym_grps}, Section~\ref{sec:un_grps} presents the proof of Theorem~\ref{thm:main_un_grps}, and in Section~\ref{sec:fin_grps_lt} we prove Theorem~\ref{thm:main_thm_fin_grp_lt}.

\section{Symmetric groups}\label{sec:sym_grps}

This section is devoted to the proof of Theorem~\ref{thm:main_sym_grps}. We start by collecting some basic facts about the groups $\PSL_2(q)$ for $q$ a prime power.

\subsection{Cycle structure of elements from $\PSL_2(q)$}\label{subsec:cyc_str_el_psl_2}

In this subsection, we recall some well-known facts about the cycle structure of the elements from $\PSL_2(q)\subseteq\Sym(L_q)$ acting on the projective line $L_q=\Proj^1(\finfield_q)$, where $q=p^e$ is a power of the prime $p$. The key observation, which we will exploit in Subsection~\ref{subsec:prf_mn_thm_sym_grps} to prove Theorem~\ref{thm:main_sym_grps}, is here that these elements are all almost isotypic.

Consider an element $g\in\SL_2(q)$ and write $\overline{g}\in\PSL_2(q)$ for the corresponding permutation on $L_q$. Then $g$ has two eigenvalues $\lambda,\lambda^{-1}\in\finfield_{q^2}$. Denote by $o\coloneqq\ord(\lambda)$ the multiplicative order of $\lambda$. We have the following trichotomy.

\emph{Case 1:} If $\lambda=\pm 1$, then $g$ has at least one eigenvector. If it has a second eigenvector not contained in the span of the first one, we have $g=\pm\id$, so $\overline{g}=\id_{L_q}$. If this is not the case, in a suitable basis
$$
g=\pm\begin{pmatrix}
1 & 1\\
0 & 1
\end{pmatrix},
$$
so $\overline{g}$ has precisely one fixed point $[1:0]$ and the remaining $q/p$ cycles are all of length $p$.

\emph{Case 2:}
In the case when $\lambda\in\finfield_q\setminus\set{\pm 1}$, we see that $g$ is diagonalizable over $\finfield_q$, whence $o=\ord(g)$ divides $q-1$. So choose coordinates such that $g=\diag(\lambda,\lambda^{-1})$. Then $\overline{g}$ has the two fixed points $[1\!:\!0],[0\!:\!1]$ on $L_q$ corresponding to the eigenvectors of $g$ over $\finfield_q$. Take any other point $x=[a\!:\!b]\in L_q$ (i.e., $ab\neq 0$). Then the orbit of $x$ under $\gensubgrp{\overline{g}}\subseteq\PSL_2(q)$ has length $k=o/2$ resp.\ $k=o$ when $o$ is even respectively odd. Namely, $x.\overline{g}^l=x$ is equivalent to $[\lambda^la\!:\!\lambda^{-l}b]=[\lambda^{2l}a\!:\!b]=[a\!:\!b]$, which is equivalent to $\lambda^{2l}=1$, meaning that $o/2\mid l$ for $o$ even and $o\mid l$ for $o$ odd.

\emph{Case 3:}
In the last case, $\lambda\in\finfield_{q^2}\setminus\finfield_q$. Since $(X-\lambda)(X-\lambda^{-1})=\chi_g(X)\in\finfield_q[X]$, it follows that $\lambda^{-1}=\lambda^q$ is the Galois conjugate to $\lambda$ in $\finfield_{q^2}$, so $o=\ord(g)$ divides $q+1$. 
Moreover, $\overline{g}$ has no fixed points as $g$ has no eigenvector over $\finfield_q$. However, embedding into $\SL_2(q^2)$ we can again assume $g=\diag(\lambda,\lambda^{-1})$. Then the same argument as above shows that all cycles of $\overline{g}$ have length $k=o/2$ resp.\ $k=o$ when $o$ is even resp.\ odd.

To summarize our observations, let us state the following corollary.

\begin{corollary}\label{cor:cyc_tp_dp_evals}
The cycle type of $\overline{g}$ acting on $L_q$ only depends on $o=\ord(\lambda)$ and $q$ if $o>2$. Namely, if $2<o$ and $o\mid q-1$ (Case~2), then it is $(1^2,(o/2)^{2(q-1)/o})$ resp.\ $(1^2,o^{(q-1)/o})$ when $o$ is even resp.\ odd, and if $2<o$ and $o \mid q+1$ (Case~3), it is $((o/2)^{2(q+1)/o})$ resp.\ $(o^{(q+1)/o})$ when $o$ is even resp.\ odd.
\end{corollary}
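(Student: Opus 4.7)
The corollary is essentially a bookkeeping summary of the three-case analysis that immediately precedes it, so my plan is to extract and organize the numerical data already produced in Cases 2 and 3, verifying that Case 1 is correctly excluded.

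First I would observe that the hypothesis $o > 2$ rules out $\lambda \in \{\pm 1\}$, which is precisely Case~1; so $\overline{g}$ must fall into Case~2 or Case~3. This immediately gives the dichotomy between $o \divs q - 1$ (diagonalisable over $\finfield_q$) and $o \divs q + 1$ (the Galois-conjugate eigenvalue case). The fact that the cycle type only depends on $(o, q)$ follows because the preceding analysis describes orbit lengths purely in terms of $o$, and the number of orbits is then fixed by $q$.

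Next I would carry out the elementary counting separately in the two cases. In Case~2, the two eigenvectors of $g$ over $\finfield_q$ produce the two fixed points $[1\!:\!0]$ and $[0\!:\!1]$ on $L_q$; the remaining $q - 1$ points fall into orbits all of common length $k$, where $k = o/2$ if $o$ is even and $k = o$ if $o$ is odd, as already shown. Dividing $q - 1$ by $k$ yields $2(q-1)/o$ respectively $(q-1)/o$ orbits of that length, producing the claimed cycle types $(1^2, (o/2)^{2(q-1)/o})$ and $(1^2, o^{(q-1)/o})$. In Case~3 there are no fixed points, and the same orbit-length analysis (performed after base change to $\finfield_{q^2}$) shows all orbits have length $k$, so dividing $q + 1$ by $k$ gives the cycle types $((o/2)^{2(q+1)/o})$ and $(o^{(q+1)/o})$.

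Since this is a direct recapitulation, there is no real obstacle beyond a sanity check that the divisibilities $o \divs q - 1$ in Case~2 and $o \divs q + 1$ in Case~3 make each quotient $2(q \mp 1)/o$ or $(q \mp 1)/o$ a positive integer, which is immediate once one notes that $o$ even forces $o/2$ to still divide $q \mp 1$ (as both $q - 1$ and $q + 1$ are even for $q$ odd, while $q$ even forces the $o \divs q - 1$ or $o \divs q + 1$ case to give $o$ odd anyway except for small boundary cases). With this verified, the corollary follows by direct assembly.
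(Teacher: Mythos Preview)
Your proposal is correct and matches the paper's approach: the corollary is stated there as a direct summary of the preceding three-case analysis with no separate proof, and what you have written is precisely the bookkeeping needed to read off the cycle types from Cases~2 and~3 once Case~1 is excluded by the hypothesis $o>2$. Your final divisibility sanity check is slightly more than the paper bothers with, but it is harmless.
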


\subsection{Effective surjectivity of word maps over finite fields}\label{subsec:eff_surj_tr}

In this subsection, using the facts from Subsection~\ref{subsec:cyc_str_el_psl_2}, we demonstrate that permutations of certain cycle type are attained as $w$-values inside groups of type $\PSL_2(q)$, thus providing the crucial ingredient for the proof of Theorem~\ref{thm:main_sym_grps} in Subsection~\ref{subsec:prf_mn_thm_sym_grps}.

As in the previous subsection, let $q=p^e$ be a power of the prime $p$.
The map $\tr_w\colon \SL_2(\overline{\finfield}_q)^2\to\overline{\finfield}_q$ defined by $(g,h)\mapsto\tr(w(g,h))$ is surjective. Indeed, if $p$ is sufficiently large, this can be seen from the existence of trace polynomials and the theorem of Borel \cite{borel1983free} that the word map associated to $w$ on $\SL_2(\overline{\finfield}_q)$ is dominant -- but it follows also from direct inspection as explained below. 
Here we show surjectivity of $\tr_w$ for $p$ large enough but in an effective way. Throughout this subsection, assume that $w$ is not a power word (i.e., not of the form $x^a$ or $y^b$) and $p\nmid a_i,b_i$ for $i=1,\ldots,l$, where $w=x^{a_1}y^{b_1}\cdots x^{a_l}y^{b_l}$ as in the introduction.

\begin{lemma}\label{lem:eff_surj}
	For any $t\in\finfield_q$ there exist a positive integer $m\leq l$ and unipotent elements $g,h\in\SL_2(q^m)$ such that $\tr_w(g,h)=t$.	
\end{lemma}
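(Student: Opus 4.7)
My plan is to use a concrete two-parameter family of unipotent elements and reduce the lemma to finding a root of a single-variable polynomial.

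Specifically, I would set
\[
g = \begin{pmatrix} 1 & 1 \\ 0 & 1 \end{pmatrix}, \qquad h(s) = \begin{pmatrix} 1 & 0 \\ s & 1 \end{pmatrix},
\]
both unipotent, and study $P(s) \defeq \tr_w(g,h(s))$ as a polynomial in the indeterminate $s$ over $\finfield_q$. Since $g^{a_i} = \begin{pmatrix} 1 & a_i \\ 0 & 1 \end{pmatrix}$ and $h(s)^{b_i} = \begin{pmatrix} 1 & 0 \\ b_i s & 1 \end{pmatrix}$, the matrix $w(g,h(s)) = \prod_{i=1}^l g^{a_i} h(s)^{b_i}$ has entries that are polynomials in $s$ of degree at most $l$, so $\deg P \leq l$. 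Once $P$ is known to have degree exactly $l$, the polynomial $P(s) - t$ has a root $s_0$ in some extension $\finfield_{q^m}$ of $\finfield_q$ with $m \leq l$, and then $g, h(s_0) \in \SL_2(q^m)$ are the desired unipotent elements.

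The heart of the argument is therefore the computation of the leading coefficient of $P$. To extract it, I would look at the $s^l$-contribution obtained by replacing each factor $h(s)^{b_i}$ by its strictly linear-in-$s$ part $\bigl(\begin{smallmatrix} 0 & 0 \\ b_i & 0 \end{smallmatrix}\bigr)$. A direct computation gives
\[
\begin{pmatrix} 1 & a_i \\ 0 & 1 \end{pmatrix}\begin{pmatrix} 0 & 0 \\ b_i & 0 \end{pmatrix} = \begin{pmatrix} a_i b_i & 0 \\ b_i & 0 \end{pmatrix},
\]
and multiplying $l$ such lower-triangular matrices with zero second column yields a matrix whose $(1,1)$-entry is $\prod_{i=1}^l a_i b_i$ and whose $(2,2)$-entry is $0$. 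Hence the coefficient of $s^l$ in $P(s)$ equals $\prod_{i=1}^l a_i b_i$, which is nonzero in $\finfield_q$ precisely because of the standing hypothesis $p \ndivs a_i, b_i$. Thus $\deg P = l$, and the lemma follows.

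The main (minor) obstacle is the bookkeeping in the leading-coefficient calculation; once one recognises that the $s^l$-term is captured by collapsing each $h(s)^{b_i}$ to its top-degree part, the non-vanishing is immediate from the no-$p$-torsion hypothesis. The assumption that $w$ is not a power word is used implicitly in that $w$ must involve both generators (otherwise $l$ would not be defined as in the introduction), so that the one-parameter deformation in $s$ genuinely produces a non-constant polynomial.
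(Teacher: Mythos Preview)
Your proof is correct and follows the same strategy as the paper's: both use the same pair of opposite unipotent one-parameter subgroups (with the roles of upper and lower triangular swapped), show that the resulting trace is a polynomial in the parameter of degree exactly $l$ with leading coefficient $\prod_{i=1}^l a_i b_i$ (nonzero since $p\nmid a_i,b_i$), and then pick a root of $P-t$ in an extension of degree at most $l$. The only difference is cosmetic: the paper packages the leading-coefficient claim via the Fricke--Klein trace polynomial and simply asserts $f_l(2,2)=\prod_i a_ib_i$, whereas you compute this coefficient directly by collapsing each $h(s)^{b_i}$ to its top-degree part, making your version slightly more self-contained.
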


\begin{proof}
	A classical result (going back to \cite{vogt1889sur}; see also \cite{frickeklein1965vorlesungen,horowitz1972characters,magnus1980rings,traina1980trace,goldman2009trace}) says that 
	$$
	\tr_w(g,h)=f(\tr(g),\tr(h),\tr(gh))
	$$ 
	is a polynomial in $\tr(g)$, $\tr(h)$, and $\tr(gh)$, where $$f(X,Y,Z)=f_l(X,Y)Z^l+\cdots+f_0(X,Y)\in\ints[X,Y,Z]$$ is uniquely determined and called the \emph{trace polynomial} of $w$. Now we define $g(U,V),h(U,V)\in\SL_2(\ints[U,V])$ by
	$$
	g(U,V)\coloneqq\begin{pmatrix}
	1 & 0\\
	U & 1
	\end{pmatrix} \text{ and }
	h(U,V)\coloneqq\begin{pmatrix}
	1 & V\\
	0 & 1
	\end{pmatrix}.
	$$
	Then $\tr(g(U,V))=\tr(h(U,V))=2$ and $\tr(gh)=UV+2$ in $\ints[U,V]$. Computing $f_l(2,2)$ from above using \cite[Theorem~1]{traina1980trace} gives $f_l(2,2)=a_1b_1\cdots a_lb_l$, in particular $f$ is non-trivial.
	
	Now, substituting $1$ for $V$ and reducing modulo $p$ gives a polynomial $r(U)\coloneqq\tr_w(g(U,1),h(U,1))=a_1b_1\ldots a_lb_lU^l+s(U)\in\finfield_p[U]$, with $\deg(s)<l$, of degree $l$ by assumption on $w$, as $p\nmid a_1b_1\cdots a_lb_l$. Hence the equation $r(U)-t=0$ is an equation over $\finfield_q$ of degree $l$, so has a solution in one of the fields $\finfield_{q^i}$ for $i=1,\ldots,l$. 
\end{proof}

\begin{remark}\label{rmk:odd_deg_ext}
	If $l$ is odd, then $m$ can also be chosen odd, since then at least one irreducible factor of $r(U)-t$ must be of odd degree.
\end{remark}

As a consequence of Lemma~\ref{lem:eff_surj} together with the facts mentioned in Subsection~\ref{subsec:cyc_str_el_psl_2}, for any fixed integer $k>1$, we get a word value in some groups of the form $\PSL_2(q^{im})$ for all $i\in\ints_+$, where $q$ depends on $k$, consisting only of $k$-cycles up to two fixed points. We conclude the following corollary.

\begin{corollary}\label{cor:tr_to_cyc_tp}
	Let $k>1$ be an integer. Assume that $2k\mid q-1$ resp.\ $k\mid q-1$ when $k$ is even resp.\ odd. Then there exists $m\leq l$ such that there is an element $\sigma\in w(\PSL_2(q^{im}))\subseteq \Sgrp_{q^{im}+1}$ of cycle type $(1^2,k^{(q^{im}-1)/k})$ for all $i\in\ints_+$.
\end{corollary}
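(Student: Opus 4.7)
The plan is to combine Lemma~\ref{lem:eff_surj} with the case analysis of Corollary~\ref{cor:cyc_tp_dp_evals}. First I would reverse-engineer which eigenvalue order $o$ forces the target cycle type $(1^2,k^{(q^{im}-1)/k})$ on $L_{q^{im}}$. Reading off Case~2 of Corollary~\ref{cor:cyc_tp_dp_evals} (where $o\mid q-1$), the cycle type matches when $o=2k$ in the even-$k$ case and when $o=k$ in the odd-$k$ case. The hypothesis ``$2k\mid q-1$ resp.~$k\mid q-1$'' is precisely what guarantees the existence of such an $o$ dividing $q-1$, and then automatically $o\mid q^{im}-1$ for every $i\in\ints_+$.

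Next I would pick $\lambda\in\finfield_q^\times$ of multiplicative order exactly $o$ (available by the divisibility hypothesis) and set
$$
t\defeq\lambda+\lambda^{-1}\in\finfield_q.
$$
Applying Lemma~\ref{lem:eff_surj} with this particular trace value yields some $m\leq l$ and matrices $g,h\in\SL_2(q^m)$ with $\tr(w(g,h))=t$. Since $w(g,h)\in\SL_2(q^m)$, its characteristic polynomial is $X^2-tX+1=(X-\lambda)(X-\lambda^{-1})$, which has the two distinct roots $\lambda,\lambda^{-1}\in\finfield_q$ (distinct because $k>1$ implies $o\geq 3$, so $\lambda^2\neq 1$). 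Hence $w(g,h)$ is diagonalizable over $\finfield_q$ with an eigenvalue of order $o$.

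Finally, for every $i\in\ints_+$ I would view $w(g,h)$ inside $\SL_2(q^{im})$ via the natural inclusion $\SL_2(q^m)\subseteq\SL_2(q^{im})$. Because $\lambda\in\finfield_q\subseteq\finfield_{q^{im}}$ has order $o>2$ with $o\mid q^{im}-1$, we are squarely in Case~2 of Corollary~\ref{cor:cyc_tp_dp_evals} applied in $\SL_2(q^{im})$, which immediately gives the cycle type $(1^2,k^{(q^{im}-1)/k})$ for $\overline{w(g,h)}=w(\overline{g},\overline{h})$ acting on $L_{q^{im}}$, handling both parities in one stroke.

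The main bookkeeping obstacle is simply making the parity choice of $o$ match the parity of $k$ in Corollary~\ref{cor:cyc_tp_dp_evals}, and verifying $o>2$ so that the degenerate Case~1 of that corollary is avoided; both are straightforward from $k>1$. Note that the value of $m$ produced by Lemma~\ref{lem:eff_surj} is uniform in $i$, which is what allows the same pair $(g,h)\in\SL_2(q^m)$ (embedded into successive extensions) to produce the desired element in every $\PSL_2(q^{im})$ simultaneously.
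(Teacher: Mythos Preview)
Your proposal is correct and follows essentially the same route as the paper: choose $\lambda\in\finfield_q^\times$ of order $2k$ resp.~$k$, set $t=\lambda+\lambda^{-1}$, apply Lemma~\ref{lem:eff_surj} to obtain $m\leq l$ and $g,h\in\SL_2(q^m)$ with $\tr(w(g,h))=t$, and then read off the cycle type of $\overline{w(g,h)}$ on $L_{q^{im}}$ from Corollary~\ref{cor:cyc_tp_dp_evals}. Your explicit verification that $o\geq 3$ (so Case~1 is avoided) and your observation that $m$ is independent of $i$ are both accurate and make the argument slightly more transparent than the paper's version.
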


\begin{proof}
	Let $i\in\ints_+$ be arbitrary. Choose $\lambda\in\finfield_q^\times$ of order $2k$ resp.\ $k$ when $k$ is even resp.\ odd. Then apply Lemma~\ref{lem:eff_surj} to $t\coloneqq\lambda+\lambda^{-1}\in\finfield_q$ to get $g,h\in\SL_2(q^m)\subseteq\SL_2(q^{im})$ for some $m\leq l$ with $\tr_w(g,h)=\tr(w(g,h))=t$. Note that $w(g,h)\in\SL_2(q^m)\subseteq\SL_2(q^{im})$ is diagonalizable with eigenvalues $\lambda,\lambda^{-1}\in\finfield_q^\times\subseteq\finfield_{q^{im}}^\times$.
	Setting $\sigma\coloneqq\overline{w(g,h)}\in\PSL_2(q^{im})\subseteq \Sgrp_{q^{im}+1}$, Corollary~\ref{cor:cyc_tp_dp_evals} of Subsection~\ref{subsec:cyc_str_el_psl_2} immediately implies the claim.
\end{proof}

\begin{remark}
	If $l$ is odd, using Remark~\ref{rmk:odd_deg_ext}, one can even remove the two fixed points from the above element $\sigma$. Indeed, assuming $2k\mid q+1$ resp.\ $k\mid q+1$ when $k$ is even resp.\ odd and going through the proof of Corollary~\ref{cor:tr_to_cyc_tp} together with the fact that $m$ can be chosen odd, one gets $\sigma\in\Sgrp_{q^{im}+1}$ of cycle type $(k^{(q^{im}+1)/k})$ for all odd $i\in\ints_+$.
\end{remark}

The next result shows that there is also a word value in $\PSL_2(q)\subseteq\Sym(L_q)\cong \Sgrp_{q+1}$, which is close to a $(q+1)$-cycle in $\Sgrp_{q+1}$. It can be considered as a weak version of \cite[Theorem~4.1]{larsenshalev2009word} which permits an elementary proof.

\begin{lemma}\label{lem:approx_lng_cyc}
	Assume that $q>4l$. Then there exists $\sigma\in w(\PSL_2(q))\subseteq\Sym(L_q)\cong \Sgrp_{q+1}$, which differs in less than $2+\sqrt{lq}$ points of $L_q$ from a $(q+1)$-cycle in $\Sgrp_{q+1}$.
\end{lemma}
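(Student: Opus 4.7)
Plan. The strategy is to find $\sigma\in w(\PSL_2(q))$ with very few cycles on $L_q$, since any permutation of $L_q$ with $k$ cycles lies within Hamming distance $k$ of some $(q+1)$-cycle (obtained by concatenating its cycles in any order into one). By Corollary~\ref{cor:cyc_tp_dp_evals}, if $\mu$ is an eigenvalue of $w(g,h)\in\SL_2(q)$ with $o=\ord(\mu)>2$, then $\sigma=\overline{w(g,h)}$ has at most $2+2(q+1)/o$ cycles on $L_q$, the leading $2$ contributing only in Case~2 of Subsection~\ref{subsec:cyc_str_el_psl_2}. Hence it suffices to realise $w(g,h)\in\SL_2(q)$ with $\mu$ of large multiplicative order.

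To do this I will reuse the set-up of Lemma~\ref{lem:eff_surj}: with $g(U,1),h(U,1)$ the unipotent pair, the trace polynomial $r(U)=\tr_w(g(U,1),h(U,1))\in\finfield_p[U]$ has degree exactly $l$ with leading coefficient $a_1b_1\cdots a_lb_l\not\equiv 0\pmod p$, so $|r(\finfield_q)|\geq q/l$. For each achievable $t\in r(\finfield_q)$ the eigenvalue $\mu_t$ determined by $\mu_t+\mu_t^{-1}=t$ lies either in $\finfield_q^{\times}$ (Case~2) or in $\ker(N\colon\finfield_{q^2}^{\times}\to\finfield_q^{\times})$ (Case~3), two cyclic groups of orders $q-1$ and $q+1$ respectively. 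The number of elements of order at most $D$ in either of these cyclic groups is bounded by $\sum_{d\leq D}\varphi(d)\leq D^2$, so after identifying $\mu_t\leftrightarrow\mu_t^{-1}$ there are at most $D^2$ traces $t\in\finfield_q$ with $\ord(\mu_t)\leq D$. Choosing $D$ just below $\sqrt{q/l}$ (possible because $q>4l$), pigeonhole against $|r(\finfield_q)|\geq q/l$ forces some $t\in r(\finfield_q)$ with $\ord(\mu_t)>D$; any preimage $U\in\finfield_q$ under $r$ then supplies $g=g(U,1),h=h(U,1)\in\SL_2(q)$ and hence $\sigma=\overline{w(g,h)}\in w(\PSL_2(q))$ with at most $2+2(q+1)/D$ cycles on $L_q$.

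The main obstacle I foresee is closing the factor-of-$l$ gap between this crude counting (which yields a cycle count of order $(q+1)/D\sim\sqrt{ql}$) and the sharper bound $2+\sqrt{q/l}$ stated in the lemma. To get the precise bound one likely needs either a tighter enumeration of small-order elements in $C_{q\pm 1}$ (the estimate $\sum_{d\leq D}\varphi(d)\leq D^2$ is rarely tight, since one should really sum only over $d\mid q\pm 1$), a Weil-type improvement showing $|r(\finfield_q)|\geq q-O_l(\sqrt{q})$ rather than just $q/l$, or a careful case analysis of Corollary~\ref{cor:cyc_tp_dp_evals} that routes through the favourable Case~3-odd regime where $\sigma$ has exactly $(q+1)/o$ cycles with no $+2$. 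Either way, the high-level strategy---using Lemma~\ref{lem:eff_surj} to pull a high-order eigenvalue back to $\SL_2(q)$ and then reading off a short cycle decomposition via Corollary~\ref{cor:cyc_tp_dp_evals}---stays the same.
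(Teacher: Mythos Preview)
Your approach is exactly the one the paper takes: realise many trace values via the degree-$l$ polynomial $r(U)$ from Lemma~\ref{lem:eff_surj}, convert to at least $2(q/l-1)$ eigenvalues in $\finfield_{q^2}^\times$, and use the bound $\sum_{i<b}\varphi(i)\leq b^2/2$ to force an eigenvalue of order $o\geq 2\sqrt{q/l}$, then read off the cycle structure via Corollary~\ref{cor:cyc_tp_dp_evals}. The only cosmetic difference is that the paper counts eigenvalues in $\finfield_{q^2}^\times$ in one go rather than splitting into the two cyclic subgroups $C_{q\pm1}$, and uses the sharper constant $\sum_{i<b}\varphi(i)\leq (b-1)^2/2-2$ (valid once $b\geq 5$, guaranteed by $q>4l$) rather than your $D^2$.

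Regarding the ``factor-of-$l$ gap'' you flagged: you have not missed anything. The paper's own proof concludes that $\sigma$ differs from a $(q+1)$-cycle in fewer than $2+\sqrt{lq}$ points, and this is the bound actually used in Subsection~\ref{subsec:prf_mn_thm_sym_grps} (where Lemma~\ref{lem:approx_lng_cyc} is invoked with the estimate $2+\sqrt{q^i l}$). The $\sqrt{q/l}$ in the lemma statement is a typo for $\sqrt{ql}$; your computation $(q+1)/D\sim\sqrt{ql}$ with $D\sim\sqrt{q/l}$ is correct and no sharper bound is needed for what follows.
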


\begin{proof}
	Using the same trick as in the proof of Lemma~\ref{lem:eff_surj}, one sees that the map $\tr_w\colon \SL_2(q) \times \SL_2(q)\to\finfield_q$ meets at least $q/l$ points (as $\tr_w(g,h)$ is a polynomial of degree $l$ in the traces $\tr(g)$, $\tr(h)$, and $\tr(gh)$). This implies that the set $\Lambda\subseteq\finfield_{q^2}^\times$ of eigenvalues of elements from $w(\SL_2(q))$ has cardinality at least $2(q/l-1)$ (two eigenvalues for each trace value apart from the trace values $\pm 2$; if $2\mid q$ one can take $2q/l-1$). Now assume that the multiplicative order of each of these eigenvalues is less than $b\coloneqq 2\sqrt{q/l}$. Then we obtain the contradiction
	$$
	\card{\Lambda}\leq\sum_{i=1}^{\ceil{b}-1}{\varphi(i)}\leq \frac{(\ceil{b}-1)^2}{2}-2<2(q/l-1),
	$$
	where the second inequality holds since $\varphi(i)\leq i-1$ and by assumption $\ceil{b}-1\geq 4$.
	Hence $\Lambda$ contains an element $\lambda$ of order $o\geq 2\sqrt{q/l}>4$. Let $f\in w(\SL_2(q))$ with eigenvalues $\lambda,\lambda^{-1}\in\finfield_{q^2}$. Then by Corollary~\ref{cor:cyc_tp_dp_evals}, $\sigma\coloneqq\overline{f}\in\PSL_2(q)$ consists apart from zero or two fixed points only of cycles of type $o/2$ resp.\ $o$ when $o$ is even resp.\ odd. This implies that $\sigma$ differs in less than $2+\sqrt{lq}$ points from a $(q+1)$-cycle in $\Sym(L_q)\cong \Sgrp_{q+1}$.
\end{proof}

\begin{remark}
	For even $q$, one can improve the estimate by a factor $1/2$, since $o$ will always be odd.
\end{remark}

\subsection{Proof of Theorem~\ref{thm:main_sym_grps}}\label{subsec:prf_mn_thm_sym_grps}

In this subsection, we use the facts provided by Subsection~\ref{subsec:eff_surj_tr} to establish Theorem~\ref{thm:main_sym_grps}. Hence let us first assume that $w=x^{a_1}y^{b_1}\cdots x^{a_l}y^{b_l}$ as in Subsection~\ref{subsec:eff_surj_tr} and postpone the case that $w$ is a power word to the end of this subsection.
We start of with the isotypic case and prove the general case as a consequence.

\emph{The isotypic case.} At first let $\sigma\in \Sgrp_n$ be $k$-isotypic, i.e., $n=c_k k$. We can certainly restrict to $k>1$, since the identity is always in $w(\Sgrp_n)$. Subsequently, we prove two estimates for the quantity $d_{\rm H}(\sigma,w(\Sgrp_n))$. The first estimate will be suitable for small $k$, whereas the second will be better for large $k$.

\emph{Estimate for small $k$.}
Let $p$ be the smallest prime such that $p\nmid a_i,b_i$ for $i=1,\ldots,l$ and $2k\mid p-1$ resp.\ $k\mid p-1$ when $k$ is even resp.\ odd. Apply Corollary~\ref{cor:tr_to_cyc_tp} to $q\coloneqq p$ to get the integer $m\leq l$. Set $q\coloneqq p^m$ and write 
$$
n=\sum_{i=1}^s{n_i(q^i+1)}+n_0
$$ 
so that $\sum_{i=1}^j{n_i(q^i+1)}+n_0\leq q^{j+1}$ for all $0\leq j\leq s$ (i.e., use a greedy algorithm to produce such a representation, starting with the biggest summand $q^s+1$). 

Then $n_i\leq q-1$ for $i\geq 1$ and $n_0\leq q$. Moreover, using a standard estimate for the $q$-ary representation of positive integers, one obtains $\sum_{i=0}^s{n_i}<q(\log_q(n)+1)$.

Write $\underline{n}=\bigsqcup_{i=1}^s{n_iL_{q^i}}\sqcup\underline{n_0}$ as a disjoint union of $n_i$ copies of the projective lines $L_{q^i}$ for $i=1,\ldots,s$ and $n_0$ singletons. Using Corollary~\ref{cor:tr_to_cyc_tp}, let $g,h\in \Sgrp_n$ be permutations, which restrict to maps $\overline{g},\overline{h}\in\PSL_2(q^i)$ acting on the copies of $L_{q^i}$ so that $w(\overline{g},\overline{h})$ has cycle type $(1^2,k^{(q^i-1)/k})$, and which fix the remaining $n_0$ points. Then, if we label the points in an optimal way, we get $$nd_{\rm H}(\sigma,w(g,h))=n_0+2\sum_{i=1}^m{n_i}\leq 2\sum_{i=0}^m{n_i}\leq 2q(\log_q(n)+1).$$

By a celebrated result of Linnik \cite{linnik1944least}, one has that the least prime which is congruent to $1$ modulo $2k$ resp.\ $k$ is bounded by $D_1k^{D_2}$ for some constants $D_1>0$, $D_2\geq 1$. Choosing $D_1$ large enough, one can also ensure that $p\nmid a_i,b_i$ for $i=1,\ldots,l$, e.g., take $p$ congruent to $1$ modulo $2ka_1b_1\cdots a_lb_l$. Hence $q\leq D_1^l k^{D_2 l}$, so that
$$
d_{\rm H}(\sigma,w(g,h))\leq 2D_1^l k^{D_2 l}(\log_2(n)+1)/n.
$$

\begin{remark}
	The logarithmic term in this argument can be removed if $l$ is odd. Namely, then we require $2k\mid p+1$ resp.\ $k\mid p+1$ for $k$ even resp.\ odd, and can choose $m\leq l$ odd, so that we are in Case~3 of Subsection~\ref{subsec:cyc_str_el_psl_2}, where no fixed points occur. However, then we may only use the odd $i$ and thus get a bigger constant.
	
	It is probably also true that, when $w$ is not a square, $\sigma\in w(\Sgrp_n)$ if $k$ is fixed and $c_k$ is even and large enough in terms of $k$ (Lemma~3.42(ii) in Subsection~3.4.3 of \cite{schneider2019phd} can be seen as a weak form of this conjecture which is true). But this also would not improve our estimate.
	
	The result of Linnik is not necessary for the qualitative statement of Theorem~\ref{thm:main_sym_grps}. We only need it to get a nice function $d$, which is mentioned in the introduction.
	There are weaker versions of Linnik's result available with an elementary proof, e.g., see \cite{thangaduraivatwani2011least}.
\end{remark}

\emph{Estimate for large $k$.}
Let $p$ be the smallest prime such that $p\nmid a_i,b_i$ for $i=1,\ldots,l$ and $p>4l$. Set $q\coloneqq p$ and write $n=\sum_{i=1}^s{n_i(q^i+1)}+n_0$ and $\underline{n}=\bigsqcup_{i=1}^s{n_iL_{q^i}}\sqcup\underline{n_0}$ as above. Using Lemma~\ref{lem:approx_lng_cyc}, let $g,h\in \Sgrp_n=\Sym(\underline{n})$ be permutations, which restrict to maps $\overline{g},\overline{h}\in\PSL_2(q^i)$ acting on the copies of $L_{q^i}$ so that $w(\overline{g},\overline{h})$ differs in less than $2+\sqrt{lq^i}$ points from a $(q^i+1)$-cycle for $i=1,\ldots,s$, and which fix the remaining $n_0$ points.

Again, under an optimal labeling, an $n$-cycle differs from $\sigma$ in at most $c_k$ points. Hence, using the triangle inequality
$$
d_{\rm H}(\sigma,w(\Sgrp_n))\leq \frac{1}{k}+\frac{1}{n}\left(\sum_{i=1}^s{n_i(\sqrt{lq^i}+2)}+n_0\right).
$$
The second term can be estimated by $D_3/\sqrt{n}$ for suitable $D_3>0$ depending on $q$ and $l$.

\begin{remark}
	By \cite[Theorem~1.3]{larsenshalev2009word} there exists a constant $D_4>0$ such that there are elements $g,h\in \Sgrp_n$ which restrict to permutations on the support of each $k$-cycle of $\sigma$ such that $d_{\rm H}(\sigma,w(g,h))\leq D_4/k$. However, the proof presented there uses some algebraic geometry and the weak Goldbach conjecture, and using it instead of the above estimate would not improve the exponent $e$ mentioned in the introduction. Note here, that we found an alternative proof of this result of Larsen and Shalev after having finished a first version of this article, which is presented in Lemma~3.42(iii) of \cite{schneider2019phd}.
\end{remark}

\emph{Global estimate for isotypic elements.}
Using the first estimate for
$$
k\leq\left(\frac{n}{2D_1^l(\log_2(n)+1)}\right)^{\frac{1}{D_2l+1}}
$$
and the second one in the opposite case, one obtains that 
$$
d_{\rm H}(\sigma,w(\Sgrp_n))\leq C_{\rm it} n^{-1/e_{\rm it}}
$$
for any $e_{\rm it}>D_2l+1\geq 2$ and $C_{\rm it}$ appropriately. 
Assuming a conjecture by Chowla~\cite{chowla1934}, we can take $D_2$ arbitrarily close to one, so that $e_{\rm it}$ can be taken arbitrarily close to $l+1$.

\vspace{0.1cm}

We will now use our knowledge about the isotypic case to conclude the proof of the theorem in the general case.

\emph{The general case.} Now we are ready to establish Theorem~\ref{thm:main_sym_grps}. 
A basic ingredient we need is the elementary fact that a permutation on $n$ letters has less than $\sqrt{2n}$ different cycle types.

\begin{proof}[Proof of Theorem~\ref{thm:main_sym_grps}]
	Let us restrict to the case when $w$ is not a power word, so that we can use the above estimates. The opposite case is clarified below.
	
	Set $d_{\rm it}(x)\coloneqq C_{\rm it} x^{1/e_{\rm it}}$ and note that $d_{\rm it}$ is monotone and concave.
	Let $\Omega_k\coloneqq\Omega_k(\sigma)$ be the support of all $k$-cycles of $\sigma\in \Sgrp_n$ and $n_k\coloneqq\card{\Omega_k}$ for $k\in\ints_+$. Let $S$ be the set of positive integers $k$, such that $n_k >0$ and note that $\card{S}<\sqrt{2n}$. Then for $n\geq 2$
	\begin{align*}
	d_{\rm H}(\sigma,w(\Sgrp_n)) &\leq\sum_{k\geq 1}{ \frac{n_k}{n}d_{\rm H}(\rest{\sigma}_{\Omega_k}\!,w(\Sym(\Omega_k)))}\\
	&\leq\sum_{k \in S}{\frac{n_k}{n}d_{\rm it}\left(\frac{1}{n_k}\right)}\\	
	&\leq d_{\rm it}\left(\sum_{k\in S}{\frac{1}{n}}\right)\leq d_{\rm it}\left(\frac{\sqrt{2n}}{n}\right)=d_{\rm it}\left(\sqrt{2/n}\right),
	\end{align*}
	where the second last inequality is implied by Jensen's inequality applied to the concave function $d_{\rm it}$, and the last one by monotonicity of $d_{\rm it}$. We can now set $d(x)\coloneqq d_{\rm it}(\sqrt{2x})=\sqrt{2}C_{\rm it} x^{1/(2e_{\rm it})}$. This finishes the proof.
\end{proof}

	For power words $w=x^a$ with $a>1$ one can show that 
	$$
	d_{\rm H}(\sigma,w(\Sgrp_n))\leq a/n
	$$ 
	for all $n\in\ints_+$ and $\sigma\in \Sgrp_n$ isotypic with equality for infinitely many $n$ and suitable $\sigma$ (cf.\ Lemma~3.4 of \cite{schneider2019phd}). Hence, in view of Proposition~\ref{prop:pwr_wrds_est} below, the argument from above using Jensen's inequality produces the optimal bound $N(w,\varepsilon)=O((1/\varepsilon)^2)$ in this case. To state this proposition, we need the following terminology.
	For $x,y\in\ints_+$, define the \emph{$y$-part} $\pi_y(x)$ of $x$ as largest divisor of $x$ which is a product of powers of primes which divide $y$.

\begin{proposition}[cf.\ Lemma~3.5 and 3.6 of \cite{schneider2019phd}]\label{prop:pwr_wrds_est}
	Let $w=x^a$ for $a>1$ and $\varepsilon>0$. 
	Set 
	$$
	D=\frac{1}{a}\sum_{i=1}^a{(\pi_k(i)-1)}.
	$$
	We have that
	$$
	 (1-\varepsilon) \sqrt{\frac{D}{4n}} <\sup_{\sigma\in \Sgrp_n}d_{\rm H}(\sigma,w(\Sgrp_n))< (1+\varepsilon) \sqrt{\frac{2D}{n}}
	$$ for $n$ sufficiently large.
\end{proposition}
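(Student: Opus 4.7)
The first step is to characterize $w({\rm S}_n) = \{g^a : g \in {\rm S}_n\}$: since an $m$-cycle of $g$ decomposes into $\gcd(m,a)$ cycles of length $m/\gcd(m,a)$ in $g^a$, one verifies that $\tau \in w({\rm S}_n)$ iff $\pi_k(a) \mid c_k(\tau)$ for every $k \geq 1$, where $c_k(\tau)$ counts the $k$-cycles of $\tau$. For $\sigma \in {\rm S}_n$ set $r_k := c_k(\sigma) \bmod \pi_k(a) \in \{0, \ldots, \pi_k(a)-1\}$ for the local obstructions, so $\sigma \in w({\rm S}_n)$ iff all $r_k = 0$. For the upper bound I would construct a nearby $a$-th power by local cycle surgery: splitting one $k$-cycle into two fragments of lengths coprime to $a$ (possible for every $k$ outside a finite exceptional set depending only on $a$) costs two Hamming coordinates and clears the obstruction at $k$, while careful pairing of merges discharges two (sometimes three) obstructions per operation. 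After amortization this yields $d_{\rm H}(\sigma, w({\rm S}_n)) \leq (1+o(1)) \cdot (1/n) \sum_k r_k$.

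Bounding $\sup_\sigma \sum_k r_k$ reduces to a knapsack: maximize $\sum_{k \in S} r_k$ subject to $\sum_{k \in S} k r_k \leq n$ and $r_k \leq \pi_k(a) - 1$. Using that $\pi_k(a)$ is periodic in $k$ modulo a power of $a$, one gets $\sum_{k=1}^N (\pi_k(a)-1) \sim N D$ and $\sum_{k=1}^N k(\pi_k(a)-1) \sim N^2 D / 2$ as $N \to \infty$. The greedy choice $S = \{k \leq N : \pi_k(a) > 1\}$ with $N \sim \sqrt{2n/D}$ thus attains $\sup_\sigma \sum_k r_k \sim \sqrt{2nD}$, yielding $d_{\rm H}(\sigma, w({\rm S}_n)) < (1+\varepsilon) \sqrt{2D/n}$ for all $\sigma$ and large $n$.

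For the lower bound I would exhibit $\sigma^* \in {\rm S}_n$ with $c_k(\sigma^*) = 1$ on the knapsack-optimal set $S \subseteq \{k : \pi_k(a) \geq 2\}$ of size $m \sim \sqrt{2nD}$ satisfying $\sum_{k \in S} k = n$, so that every $k \in S$ is obstructed. For any $\tau \in w({\rm S}_n)$, let $\rho = (\sigma^*)^{-1}\tau$; then $n \cdot d_{\rm H}(\sigma^*, \tau) = |\supp(\rho)| \geq T(\rho)$, the Cayley length of $\rho$. Each transposition in a minimal Cayley factorization of $\rho$ acts on $\sigma^*$ by one split or merge, modifying at most three of the $c_k$ and hence at most three obstruction indicators; since $\sigma^*$ has $m$ obstructions while $\tau$ has none, $T(\rho) \geq m/3$ and so $d_{\rm H}(\sigma^*, \tau) \geq m/(3n)$, which after adjusting the constant in $|S|$ yields the $(1-\varepsilon) \sqrt{D/(4n)}$ bound.

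The main obstacle is obtaining the exact constants $\sqrt{2}$ and $1/2$ rather than just the order $\sqrt{D/n}$. For the upper bound, the coefficient $\sqrt{2}$ requires a careful amortized surgery plan spending on average one Hamming coordinate per obstruction removed; the finitely many small $k$ where the coprime-splitting trick fails contribute a bounded additive error absorbable into the $(1+\varepsilon)$ slack once $n$ is large. For the lower bound the inequality $T(\rho) \geq m/3$ is loose, and reaching $1/2$ in the constant requires showing that in a minimal factorization of $\rho$ essentially no transposition discharges three obstructions at once, because a merge of two cycles of lengths in $S$ typically produces a cycle whose length does not lie in $S$ and so contributes a fresh obstruction rather than cancelling a pre-existing one.
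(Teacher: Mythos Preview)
The paper does not actually prove this proposition; immediately after stating it the authors write ``Since the argument is quite lengthy and rather straightforward, we omit it here and refer to~\cite{schneider}'' (the first author's thesis in preparation). So there is no in-paper proof to compare against, and I can only assess your sketch on its own merits.

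Your overall architecture is correct: the characterisation $\tau\in w({\rm S}_n)\Leftrightarrow \pi_k(a)\mid c_k(\tau)$ for all $k$ is right, the knapsack reformulation of $\sup_\sigma\sum_k r_k$ with the periodicity of $\pi_k(a)$ yielding $\sum_{k\le N}(\pi_k(a)-1)\sim ND$ is exactly the mechanism that produces the $\sqrt{D/n}$ scale, and cycle surgery is the natural tool for the upper bound. The order of magnitude $\Theta(\sqrt{D/n})$ follows from what you wrote.

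There is, however, a genuine gap in your lower bound, beyond the constant-chasing you already flag. Your extremal permutation has $c_k(\sigma^*)=1$ on a set $S\subseteq\{k:\pi_k(a)\ge 2\}$, and you then count \emph{obstructed indices} $m=|S|$. But $m$ scales like $\sqrt{2\rho n}$ where $\rho$ is the density of $\{k:\pi_k(a)\ge 2\}$, not like $\sqrt{2Dn}$; these coincide only when $a=2$. For $a=4$ one has $\rho=1/2$ while $D=3/2$, so your bound $m/(3n)$ is off by a fixed factor from the target $(1-\varepsilon)\sqrt{D/(4n)}$ and no ``adjusting the constant in $|S|$'' can repair this, since $|S|$ is capped by the knapsack constraint. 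To reach the stated constant one must take $c_k(\sigma^*)=\pi_k(a)-1$ (so that $\sum_k r_k\sim\sqrt{2Dn}$), and then track the total obstruction $\sum_k r_k$ rather than the number of obstructed indices. This forces a more delicate transposition-counting argument, because a single $\pm1$ change in $c_k$ can move $r_k$ by $\pi_k(a)-1$ when it crosses a multiple of $\pi_k(a)$; controlling how often this can happen along a minimal factorisation is where the real work for the sharp lower constant lies.
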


Since the argument is quite lengthy and rather straightforward, we omit it here and refer to \cite{schneider2019phd} for the proof.

\section{Unitary groups}\label{sec:un_grps}

In this section, we present the proof of Theorem~\ref{thm:main_un_grps} (Subsection~\ref{subsec:prf_thm_2} below) and draw some connections to the previous article~\cite{elkasapythom2014goto} of Elkasapy and the second author (see Subsection~\ref{subsec:con_rmk}).
	
\subsection{Proof of Theorem~\ref{thm:main_un_grps}}\label{subsec:prf_thm_2}

Denote by $K$ the one-relator group 
$$
\grp{x,y}[w]=\freegrp_2/\gennorsubgrp{w}
$$ 
associated to $w$.

The key observation is the following lemma involving the second cohomology group of a quotient of the Cayley complex of $K$, in which we interpret monomial matrices in the complex unitary group $\U_n$ as $1$-cochains.

Let $X$ be the Cayley complex of the presentation $\grp{x,y}[w]$ of $K$, i.e., its $1$-skeleton is the directed Cayley graph $\Gamma=\Cay(K,\set{x,y})$ and for each vertex $v\in V(\Gamma)=K$ we glue in a $2$-cell $c_v$ along $w$ starting at $v$. 
	
For $\pi\colon K\twoheadrightarrow G$ be a surjective homomorphism to a finite group $G$ of order $n$, set $g\coloneqq\pi(x)$, $h\coloneqq\pi(y)$ and let $X(\pi)$ be the quotient of the $2$-complex $X$ induced by $\pi$, whose $1$-skeleton is the Cayley graph $\Gamma(\pi)=\Cay(G,\set{g,h})$ of $G$. Consider also permutations $\sigma_g,\sigma_h \in \Sgrp_n$ arising from the action of $G$ on itself. Set $d(\pi)\coloneqq \dim H^2(X(\pi),\reals)$ to be the dimension of the second cohomology group of $X(\pi)$.

\begin{lemma}\label{lem:cohom_lem}
	For every diagonal unitary matrix $M\in\U_n$, we can find monomial matrices $M_g,M_h\in\U_1\wr \Sgrp_n\subseteq\U_n$ such that $M_g=(\lambda_i)_{i=1}^n.\sigma_g$ and $M_h=(\mu_i)_{i=1}^n.\sigma_h$, such that $w(M_g,M_h)$ is diagonal and differs in at most $d(\pi)$ diagonal entries from $M$. Hence, setting $\varepsilon(\pi)\coloneqq d(\pi)/n$, the image of the word map $w(\U_n)$ is $\varepsilon(\pi)$-dense in $\U_n$. 
\end{lemma}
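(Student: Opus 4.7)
The plan is to interpret the free parameters $(\lambda_v)_{v\in G}$ and $(\mu_v)_{v\in G}$ of the monomial matrices $M_g$ and $M_h$ as a $1$-cochain on $X(\pi)$ with values in $\U_1\cong\reals/\ints$, to recognise $w(M_g,M_h)$ as the $\exp(2\pi i \cdot)$-image of a cellular coboundary, and then to reduce the matching problem for a prescribed diagonal target $M$ to a linear-algebraic statement controlled by $d(\pi)$. Concretely, the edges of $\Gamma(\pi)$ come in two families: an $x$-edge $(v,vg)$ and a $y$-edge $(v,vh)$ for each $v\in G$. Writing $\lambda_v=\exp(2\pi i\alpha_v)$ and $\mu_v=\exp(2\pi i\beta_v)$ sets up a canonical bijection between admissible choices of $(M_g,M_h)\in(\U_1\wr{\rm S}_n)^2$ with prescribed permutation parts $\sigma_g,\sigma_h$ and cochains $\phi\in C^1(X(\pi),\reals/\ints)\cong(\reals/\ints)^{2n}$.

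Since $w\in\ker\pi$, the underlying permutation $w(\sigma_g,\sigma_h)$ is trivial, so $w(M_g,M_h)$ is automatically diagonal. Tracing the action on the standard basis vector $e_v$, its $v$-th diagonal entry is the product of the $\lambda$'s and $\mu^{\pm 1}$'s picked up along the closed loop at $v$ in $\Gamma(\pi)$ labelled by $w$, which is precisely the boundary of the $2$-cell $c_v$ with appropriate signs. Therefore, under the identification above, $w(M_g,M_h)=\exp(2\pi i\,\delta\phi)$ entrywise on the diagonal, where $\delta\colon C^1(X(\pi),\reals/\ints)\to C^2(X(\pi),\reals/\ints)$ is the cellular coboundary.

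Encoding the diagonal of $M$ as $m\in(\reals/\ints)^n=C^2(X(\pi),\reals/\ints)$, the task becomes: find $\phi$ with $\delta\phi$ agreeing with $m$ on at least $n-d(\pi)$ coordinates. Because $X(\pi)$ has no cells in dimension greater than $2$, $H^2(X(\pi),\reals)=\reals^n/\im(\delta_{\reals})$, so the real subspace $V\defeq\im(\delta_{\reals})\subseteq\reals^n$ has codimension $d(\pi)$. By elementary linear algebra (pick pivot coordinates of a matrix whose row span is $V$), there exists $S\subseteq\underline{n}$ with $\card{S}=n-d(\pi)$ such that the coordinate projection $V\to\reals^S$ is surjective; reducing modulo $\ints$, the same holds at the level of tori, so we can choose $\phi$ with $(\delta\phi)|_S=m|_S$. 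The resulting $M_g,M_h$ satisfy the approximation statement. The density claim for an arbitrary $u\in\U_n$ is then immediate: $u$ is $\U_n$-conjugate to a diagonal unitary, $w(\U_n)$ is conjugation-invariant, and the rank metric is unitarily invariant.

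The main point demanding care is the bookkeeping in translating products of monomial matrices along $w$ into the coboundary $\delta\phi$, in particular getting signs right when letters of $w$ carry negative exponents and checking that the formalism is unaffected by edge collapses that can occur if $g$ or $h$ has small order. Beyond that, the argument is purely formal: the cohomological content is concentrated in the codimension identity $\codim\im(\delta_{\reals})=d(\pi)$, which holds by definition since $X(\pi)$ is $2$-dimensional, while the passage from $\reals$ to $\reals/\ints$ is free because the image of a linear subspace surjecting onto $\reals^S$ surjects onto $(\reals/\ints)^S$.
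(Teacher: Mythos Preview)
Your proof is correct and follows essentially the same approach as the paper: interpret the diagonal parts of $M_g,M_h$ as a real $1$-cochain on $X(\pi)$, identify the diagonal entries of $w(M_g,M_h)$ with the exponential of its coboundary, and use that $\im(d^2(\pi))\subseteq\reals^n$ has codimension $d(\pi)$ to hit all but $d(\pi)$ prescribed coordinates. Your write-up is in fact slightly more explicit than the paper's in justifying the pivot-coordinate step and the reduction from arbitrary unitaries to diagonal ones.
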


\begin{proof}
	Write $C_\bullet(\pi)$ resp.\ $C^\bullet(\pi)$ for the chain resp.\ cochain complex over $\reals$ associated to $X(\pi)$ with differentials $d_i\colon C_i\to C_{i-1}$ resp.\ codifferentials $d^i\coloneqq d_i^\ast\colon C^{i-1}\to C^i$ ($i\in\nats$).
	A $1$-cochain $\alpha\colon X_1(\pi)\to\reals$ assigns to each edge $e$ of $\Gamma(\pi)$ a real number $\alpha_e$. Then the Cayley graph $\Gamma(\pi)$ together with this assignment encodes two elements $g_\alpha,h_\alpha\in\reals\wr \Sgrp_n\subseteq\reals^{n\times n}$, where the permutation part of $g_\alpha$ resp.\ $h_\alpha$ is given by the action of $g$ resp.\ $h$ on the vertices $V(\Gamma(\pi))=G$ of $\Gamma(\pi)$, and the first part is induced by the values $\alpha_e$ ($e\in E(\Gamma(\pi))$).
	The group $\reals\wr \Sgrp_n$ can be seen as the set of monomial matrices in $\reals^{n\times n}$, where the entries marked along the corresponding permutations are added instead multiplied.
	
	Given the $1$-cochain $\alpha$, its image under the codifferential $d^2(\pi)\colon C^1(\pi)\to C^2(\pi)$ is defined by 
	$$
	d^2(\pi)(\alpha)(c)=\sum_{e\in\partial(c)} \varepsilon_e\alpha(e),
	$$ 
	for all $c\in X_2(\pi)$, where $\partial(c)$ is the set of edges of the boundary of the cell $c$ and $\varepsilon_e\in\set{\pm 1}$ is the corresponding orientation.
	Now $C^2(\pi)/\im(d^2(\pi))=H^2(X(\pi),\reals)$.
	
	Choose $M=\diag(\lambda_v)_{v\in G}\in\U(\ell^2G)=\U_n$ arbitrarily and find $\beta_v\in\reals$ such that $\lambda_v=e^{i\beta_v}$ for $v\in G$. Then there exists a function $\alpha\colon X_1(\pi)\to\reals$ such that $d^2(\pi)(\alpha)(c_v)=\beta_v$ for all but at most $d(\pi)$ vertices $v\in V(\Gamma(\pi))=G$.
	
	But, letting $\varphi\colon\reals\wr \Sgrp_n\to\U^1\wr \Sgrp_n\subseteq\U_n$ be the homomorphism induced by exponentiation, we also see that 
	$$
	w(\varphi(g_\alpha),\varphi(h_\alpha))=(e^{i d^2(\pi)(\alpha)(c_v)})_{v\in G}.\id.
	$$ 
	Hence $M_g\coloneqq\varphi(g_\alpha)$, $M_h\coloneqq\varphi(h_\alpha)$ is a suitable choice of matrices. The last statement of the lemma follows from the definition of the normalized rank metric on $\U_n$. This completes the proof.
\end{proof}

\begin{remark}\label{rmk:surj_wrd_mp}
	Subsequently, for a chain $x\in C_i(\pi)$ ($i=0,1,2$) write $x^\ast\in C^i(\pi)$ for the corresponding dual cochain defined by $\scprod{x,\cdot}=x^\ast$, where $\scprod{\cdot,\cdot}$ is the inner product associated to the basis $X_i(\pi)$ of $C_i(\pi)$.
	
	If $w\in\freegrp_2'$, it is clear that $d(\pi)\geq 1$, as any element 
	$$
	\sum_{v\in G}{\lambda_v c_v^\ast}\in\im(d^2(\pi))
	$$ 
	lies in the hyperplane given by $\sum_{v\in G}{\lambda_v}=0$ (here we use that $G$ is finite). This reflects the fact that then $w(\U_n)\subseteq\SU_n$.
	Moreover, in this case, if $d(\pi)=1$, the word map $w\colon\SU_n\times\SU_n\to\SU_n$ is surjective (by transitivity we can then achieve equality on any $n-1$ diagonal entries in the above proof). Namely, if $w(g,h)=u$ for $g,h\in\U_n$ and $u\in\SU_n$, we can find $\lambda,\mu\in\complex$ such that $\lambda^n=\det(g)$ and $\mu^n=\det(h)$. Then $g'\coloneqq\lambda^{-1}g$, $h'\coloneqq\mu^{-1}h$ lie in $\SU_n$ and satisfy $w(g',h')=u$.
	
	In the opposite case, when $w\in\freegrp_2\setminus\freegrp_2'$, either $w(1,x)$ or $w(x,1)$ is of the form $x^m$ for $m\in\ints\setminus\set{0}$. So the word $w$ is always surjective on $\U_n$ and $\SU_n$ since every element of these groups is diagonalizable and hence has an $m$th root (of determinant one in case of $\SU_n$).
\end{remark}

\begin{remark}
	The map $d^2(\pi)$ of the Lemma~\ref{lem:cohom_lem} also makes sense for a surjective homomorphism $\pi\colon K\twoheadrightarrow G$ onto an infinite group $G$. We will consider such a case (namely for the group $G=H$ defined later; see also Lemma~\ref{lem:d_1*_nontriv}).
\end{remark}

\begin{remark}\label{rmk:fox_der}
	In the situation of the proof of Lemma~\ref{lem:cohom_lem}, write $C_\bullet=C_\bullet(X)$ resp.\ $C^\bullet=C^\bullet(X)$ for the chain resp.\ cochain complex over $\reals$ associated to $X$.
	Then we have a commutative diagram
	$$
	\begin{tikzcd}
	0 \arrow{r} & C^0 \arrow{r}{d^1}\arrow{d} & C^1 \arrow{r}{d^2}\arrow{d} & C^2 \arrow{r}\arrow{d} & 0\\
	0 \arrow{r} & C^0(\pi) \arrow{r}{d^1(\pi)} & C^1(\pi) \arrow{r}{d^2(\pi)}  & C^2(\pi) \arrow{r} & 0
	\end{tikzcd},
	$$
	where the top arrows are $K$-equivariant, the bottom arrows are $G$-equivariant, and the vertical arrows are induced by $\pi$. The duality defined in Remark~\ref{rmk:surj_wrd_mp} identifies $C^i(\pi)$ $G$-equivariantly with $C_i(\pi)$. Then identifying via the isomorphisms 
	$$
	C_1(\pi)\cong g\cdot\reals[G]\oplus h\cdot\reals[G]\quad\text{ and }\quad C_2(\pi)\cong\reals[G],
	$$ 
	where the latter is given by $c_v\mapsto v$ ($v\in G$), and letting $^\ast\colon\reals[G]\to\reals[G]$ be the natural involution induced by $g\mapsto g^{-1}$, one computes that the map $d^2(\pi)$ is then given by $d^2(\pi)(g\cdot 1)=\pi(\partial w/\partial x)^\ast$ and $d^2(\pi)(h\cdot 1)=\pi(\partial w/\partial y)^\ast$. Here, $\partial w/\partial x$ resp.\ $\partial w/\partial y$ denote the Fox derivative \cite{fox1953free} of $w$ with respect to $x$ resp.\ $y$, i.e., if $w=x^{a_1}y^{b_1}\cdots x^{a_l}y^{a_l}$ and $\varepsilon_i\coloneqq\sgn(a_i)$, $\delta_i\coloneqq\sgn(b_i)$, then
	\begin{align*}
	\frac{\partial w}{\partial x} &=\sum_{i=1}^l{\varepsilon_i x^{a_1} y^{b_1}\cdots x^{a_{i-1}}y^{b_{i-1}}x^{\frac{\varepsilon_i-1}{2}}(1+x^{\varepsilon_i}\cdots+x^{\varepsilon_i\abs{a_i-1}})},\\
	\frac{\partial w}{\partial y} &=\sum_{i=1}^l{\delta_i x^{a_1} y^{b_1}\cdots x^{a_i}y^{\frac{\delta_i-1}{2}}(1+y^{\delta_i}\cdots+x^{\delta_i\abs{b_i-1}})}.
	\end{align*}	
\end{remark}

Later we will apply Lemma~\ref{lem:cohom_lem} to a family of surjective homomorphisms $\pi(p)\colon K\twoheadrightarrow H(p)$ ($p$ a sufficiently large prime) to finite groups $H(p)$ of order $n_p=p^h$ ($h$ is a constant defined later), so that $\varepsilon(\pi(p))\to 0$ as $p\to\infty$. This is only possible if the corresponding map $d^2(\pi(p))$ in the above proof for $G=H(p)$ is non-trivial for sufficiently large $p$. Hence, next we characterize when this happens for an arbitrary homomorphism $\pi\colon K\twoheadrightarrow G$.

\begin{lemma}\label{lem:d_1*_nontriv}
	Let $G=\freegrp_2/N=\gensubgrp{x,y}/N$ be a (not necessarily finite) quotient of the one-relator group $K$, i.e., $w\in N$, and set $g\coloneqq\overline{x}$, $h\coloneqq\overline{y}$ in $G$. Define $\pi\colon K\twoheadrightarrow G$ as in Lemma~\ref{lem:cohom_lem}. Then $d^2(\pi)$ in the proof of Lemma~\ref{lem:cohom_lem} is identically zero if and only if $w\in N'=[N,N]$.
\end{lemma}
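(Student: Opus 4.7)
The plan is to reduce the statement, via Remark~\ref{rem:fox_der}, to a classical fact in Fox calculus. By that remark, after the identification $C_\bullet(\pi)\cong C^\bullet(\pi)$, the codifferential $d^2(\pi)$ is $G$-equivariant and determined by its values on the two free $\ints[G]$-module generators of $C^1(\pi)$, namely $\pi(\partial w/\partial x)^*$ and $\pi(\partial w/\partial y)^*$. Hence $d^2(\pi)\equiv 0$ if and only if both $\pi(\partial w/\partial x)$ and $\pi(\partial w/\partial y)$ vanish in $\reals[G]$; since Fox derivatives have integer coefficients, this is equivalent to their vanishing in $\ints[G]$. The lemma is therefore equivalent to the algebraic assertion that $\pi(\partial w/\partial x)=\pi(\partial w/\partial y)=0$ if and only if $w\in N'$.

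For the easy direction $(\Leftarrow)$, I would exploit that $\partial/\partial x$ is a derivation, i.e., $\partial(uv)/\partial x=\partial u/\partial x+u\cdot\partial v/\partial x$. For any $n,m\in N$, applying $\pi$ and using $\pi(n)=1$ yields $\pi(\partial(nm)/\partial x)=\pi(\partial n/\partial x)+\pi(\partial m/\partial x)$. Thus $n\mapsto\pi(\partial n/\partial x)$ is a homomorphism from $N$ into the additive group of $\ints[G]$, which automatically factors through $N/N'$ and in particular vanishes on $N'$. The same argument works for $\partial/\partial y$, giving one implication.

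For the non-trivial direction $(\Rightarrow)$, I would invoke the classical Crowell exact sequence of $\ints[G]$-modules
\[
0\longrightarrow N/N'\xrightarrow{\Phi}\ints[G]^2\longrightarrow I(G)\longrightarrow 0,
\]
where $I(G)$ is the augmentation ideal of $\ints[G]$, the right-hand map sends $(a,b)\mapsto a(g-1)+b(h-1)$, and $\Phi(nN')=(\pi(\partial n/\partial x),\pi(\partial n/\partial y))$. Exactness at $\ints[G]^2$ and surjectivity of the right map follow formally from Fox's fundamental identity $u-1=(\partial u/\partial x)(x-1)+(\partial u/\partial y)(y-1)$; the substantive content is the injectivity of $\Phi$, which is Fox's theorem on free differential calculus~\cite{fox1953free} and does not require $G$ to be finite. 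Granted this, if $\pi(\partial w/\partial x)=\pi(\partial w/\partial y)=0$ then $\Phi(wN')=0$, whence $w\in N'$. The main obstacle is precisely this injectivity of $\Phi$; everything else is a direct unwinding of the definitions together with Remark~\ref{rem:fox_der}.
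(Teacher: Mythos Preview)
Your argument is correct and takes a genuinely different route from the paper. The paper proves both directions by a direct combinatorial analysis of the loop $l_v(w)$ in the Cayley graph $\Gamma(\pi)$: for $(\Leftarrow)$ it observes that a product witnessing $w\in N'$ pairs up traversals of each edge in opposite directions; for $(\Rightarrow)$ it homotopes the image of the loop to a bouquet of circles and reads off $w\in N'$ from the fact that the loop is homologically trivial there. In contrast, you translate everything via Remark~\ref{rem:fox_der} into the vanishing of $\pi(\partial w/\partial x),\pi(\partial w/\partial y)$ in $\ints[G]$ and then invoke the Crowell exact sequence, so that the non-trivial implication becomes the injectivity of $\Phi\colon N/N'\hookrightarrow\ints[G]^2$.

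What your approach buys is concision and conceptual clarity: once one knows Fox calculus, the lemma is a one-line corollary of a classical fact, and the derivation-property argument for $(\Leftarrow)$ is very clean. What the paper's approach buys is self-containedness: it avoids importing the Crowell sequence and instead reproves the injectivity of $\Phi$ in situ via the elementary observation that $H_1(\Gamma(\pi);\ints)\cong N/N'$ embeds into $C_1(\Gamma(\pi);\ints)$. The two arguments are really the same phenomenon viewed from the algebraic versus the geometric side; your remark that ``the main obstacle is precisely this injectivity'' is exactly right and is what the paper's bouquet-of-circles argument establishes by hand. One small point: Remark~\ref{rem:fox_der} is stated in the finite-$G$ context of Lemma~\ref{lem:cohom_lem}, so you might add a word that the Fox-derivative formula for $d^2(\pi)$ is purely formal and carries over verbatim to arbitrary quotients $G$.
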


\begin{proof}
	By assumption, we have $w\in N$. If $w\in N'$, then $w=\prod_{i=1}^k{n_i}$ is a product of elements $n_i\in N$ ($i=1,\ldots,k$), where the multiset $(n_i)_{i=1}^k$ equals $(n_i^{-1})_{i=1}^k$. Consider the $w$-loop $l_v(w)$ with arbitrary starting vertex $v\in V(\Gamma(\pi))$. The above shows that each subloop of $l_v(w)$ associated to $n_i$ ($i=1,\ldots,k$) returns to $v$ and hence any edge in $\Gamma(\pi)$ is traversed equally often in both directions. But then one sees immediately that $d^2(\pi)=0$.
	
	Conversely, the assumption $d^2(\pi)=0$ implies that the loops $l_v(w)$ $(v\in V(\Gamma(\pi)))$ traverse all of its edges equally often in both directions. Let $\Delta$ be the undirected simple graph which is the image of the loop $l_v(w)$. Then $\Delta$ is homotopic to a bouquet of circles each of which is traversed equally often in both directions by the loop $l$ corresponding to $l_v(w)$ under the chosen homotopy. But this means precisely that the homotopy class of $l$ lies in $\pi_1(\Delta)'$. Pulling back the generators of the group $\pi_1(\Delta)$ to elements of $N$, we see that $w\in N'$.
\end{proof}

Now we show how to define the maps $\pi(p)\colon K\twoheadrightarrow H(p)$ and quotients $H(p)$ appropriately  (for $p$ a sufficiently large prime) such that $\varepsilon(\pi(p))\to 0$ for $p\to\infty$.

Since $\freegrp_2$ is residually nilpotent, there exists a unique integer $c=c(w)\geq 0$ such that $w\in\gamma_{c+1}(\freegrp_2)\setminus\gamma_{c+2}(\freegrp_2)$. Set $H\coloneqq\freegrp_2/\gamma_{c+1}(\freegrp_2)$ to be the free $2$-generated nilpotent group of class $c$ (in which $w$ is trivial) and let $\pi\colon K\twoheadrightarrow H$ be the corresponding quotient map.

By Jennings' embedding theorem, every finitely generated torsion-free nilpotent group $N$ can be embedded into the group $U\coloneqq\UT_d(\ints)$ of upper triangular matrices over $\ints$ (such an embedding can even be explicitly computed from a polycyclic representation of $N$ by an algorithm due to Nickel \cite{nickel2006matrix}; see also \cite{gulweiss2017dimension} and \cite{segal2005polycyclic}). Since the factors of the lower central series $\gamma_i(\freegrp_2)/\gamma_{i+1}(\freegrp_2)$ ($i=1,\ldots,c$) are free abelian, $H$ is a poly-$\ints$ group and we obtain that $H$ can be concretely realized as a subgroup of $\UT_d(\ints)$ for some dimension $d=d(w)$. 

Define the central series $H_i\coloneqq H\cap\gamma_i(U)$ of $H$ for $i=1,\ldots,d$ (note that the group $\gamma_l(U)$ consists of the upper triangular matrices $u=(u_{ij})\in U$ with $u_{ij}=0$ for $1\leq i-j\leq l-1$). Then $H_i/H_{i+1}\leq\gamma_i(U)/\gamma_{i+1}(U)\cong \ints^{d-i}$ (the $i$th off-diagonal). Let $B_i\subseteq\ints^{d-i}$ be a basis of $H_i/H_{i+1}$ and set $h_i\coloneqq\dim(H_i/H_{i+1})=\card{B_i}$ ($i=1,\ldots,d-1$). Let $p$ be a prime not dividing some $h_i\times h_i$ minor of the $(d-i)\times h_i$-matrix associated to $B_i$ for all $i=1,\ldots,d-1$. Then, writing $U(p)\coloneqq\UT_d(\ints/(p))$ and letting $H(p),H_i(p)$ ($i=1,\ldots,d$) be the image of $H,H_i$ in $U(p)$, we see that $H_i/H_{i+1}\cong\ints^{h_i}\twoheadrightarrow H_i(p)/H_{i+1}(p)\cong(\ints/(p))^{h_i}$. 
Define $\pi(p)\colon K\twoheadrightarrow H(p)$ to be the induced quotient map to the finite $p$-group $H(p)$.

Now refine the central series $(H_i)_{i=1}^d$ to a central series $(L_j)_{j=1}^{h+1}$ such that $L_j/L_{j+1}\cong\ints$ for $j=1,\ldots,h$, where $h \coloneqq\sum_{i=1}^{d-1}{h_i}$ is the Hirsch length of $H$. Then still $L_j/L_{j+1}\cong\ints\twoheadrightarrow L_j(p)/L_{j+1}(p)\cong\ints/(p)$ for $j=1,\ldots,h$ and $p$ as above. Let $x_j\in H$ be such that $\gensubgrp{x_j}L_{j+1}=L_j$ for $j=1,\ldots,h$. Note that the map $d^2(\pi)$ associated to the surjective homomorphism $\pi\colon K\twoheadrightarrow H$ is non-trivial, since if it were trivial, then by Lemma~\ref{lem:d_1*_nontriv} applied to $\pi$ and $N=\ker(\pi)=\gamma_{c+1}(\freegrp_2)$ we would have $w\in N'=[\gamma_{c+1}(\freegrp_2),\gamma_{c+1}(\freegrp_2)]\subseteq\gamma_{c+2}(\freegrp_2)$, which is not the case by the choice of $c$. Hence, from the local nature of the definition of $d^2(\pi)$, it follows that there is an edge $e\in E(\Gamma(\pi))$ such that $0\neq d^2(\pi)(e^\ast)=\sum_v{\lambda_v c_v^\ast}$ with $\lambda_v\in\ints\setminus\set{0}$. This element corresponds to the element $0\neq y=\sum_v{\lambda_v v}\in\ints[H]$ in the group ring. Subsequently, let $k$ be a finite field of large enough characteristic such that the image of $y$ in $k[H]$ is non-trivial, and for $z\in k[H]$ an element in the group algebra of $H$, write $z(p)$ for its image in $k[H(p)]$ under the reduction map. It follows that for $p$ large enough, the elements $v$ in the support $\supp(y)\subseteq H$ are mapped injectively to the elements $v(p)=\pi(p)(v)\in\supp(y(p))\subseteq H(p)$ (e.g., take $p$ larger than all matrix entries of elements $v$ from $\supp(y)$).

Now the $k$-dimension of $\im(d^2(\pi(p)))$ can be bounded from below by the $k$-dimension of the right ideal $y(p)k[H(p)]$ as the action of $H(p)$ on the $2$-cells of $C(\pi(p))$ equals its right action on the group algebra $k[H(p)]$. We bound the dimension of the latter from below by the following lemma.

\begin{lemma}\label{lem:bd_dim_rt_idl}
	In this situation, the right ideal $y(p)k[H(p)]\subseteq k[H(p)]$ generated by $y(p)$ has $k$-dimension at least $(p-f)^h$ for a constant $f=f(w)$ only depending on $w$.
\end{lemma}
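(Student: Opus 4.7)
The plan is to induct on the Hirsch length $h$, exploiting the refined central series $(L_j)_{j=1}^h$ whose successive quotients in $H(p)$ are cyclic of order $p$. For the base case $h=1$, $H\cong\ints$ and $H(p)\cong\ints/(p)$, so $k[H(p)]\cong k[X]/(X^p-1)$; then $y(p)$ corresponds to a polynomial of degree bounded by some $f_0=f_0(y)$, and its principal ideal has codimension $\deg\gcd(y(p),X^p-1)\leq f_0$, hence dimension at least $p-f_0$.

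For the inductive step, set $Z\defeq L_h(p)$, a cyclic central subgroup of order $p$. Since dimensions are preserved under base change, one may extend $k$ to contain a primitive $p$-th root of unity; then the idempotents $e_\zeta\defeq\frac{1}{p}\sum_{j=0}^{p-1}\zeta^{-j}z_0^j$ associated to characters $\zeta\in\mu_p$ of $Z$ (with $z_0$ a generator of $Z$) induce a two-sided decomposition $k[H(p)]=\bigoplus_{\zeta\in\mu_p}A_\zeta$, where $A_\zeta\defeq e_\zeta k[H(p)]$ is a twisted group algebra over $\bar H(p)\defeq H(p)/Z$, a nilpotent group of Hirsch length $h-1$. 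Decomposing $y(p)=\sum_\zeta y_\zeta$ accordingly yields $\dim y(p)k[H(p)]=\sum_\zeta\dim y_\zeta A_\zeta$. Writing each $s\in\supp(y)$ as $\bar s\cdot z_0^{c_s}$ shows $y_\zeta = e_\zeta\cdot\sum_{\bar s}\bigl(\sum_{s:\bar s(s)=\bar s}\lambda_s\zeta^{c_s}\bigr)\bar s$, so the assignment $\zeta\mapsto y_\zeta$ is polynomial in $\zeta$ of degree bounded by $\max_s|c_s|$ and, since $y\neq 0$, not identically zero; consequently only at most $f_1=f_1(y)$ values of $\zeta$ yield $y_\zeta=0$. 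For each of the remaining $\geq p-f_1$ values I invoke the inductive hypothesis, suitably strengthened to cover twisted group algebras of nilpotent groups with an analogous central series, to get $\dim y_\zeta A_\zeta\geq(p-f_2)^{h-1}$ uniformly in $\zeta$. Summing yields $\dim y(p)k[H(p)]\geq(p-f_1)(p-f_2)^{h-1}\geq(p-f)^h$ for $f\defeq\max(f_1,f_2)$.

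The main obstacle is the strengthened inductive hypothesis: for $\zeta\neq 1$ the algebras $A_\zeta$ are not ordinary group algebras, and already in the Heisenberg case they become matrix algebras $M_p(k)$, so the base case and inductive step must be formulated uniformly for twisted group algebras of nilpotent groups admitting a Mal'cev-type central series with $\ints/(p)$-factors. Since $H^2(\ints/(p),k^\times)=0$, the cocycle on each successive cyclic central layer can be normalized so that the analogous central decomposition step iterates cleanly inside each $A_\zeta$; the remaining technical burden is a careful bookkeeping of the auxiliary constants $f_1,f_2$ to ensure they remain uniform in $\zeta$ across the recursion, so that the final $f$ depends only on $w$ via the fixed data of $y$.
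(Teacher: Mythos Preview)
Your idea of decomposing $k[H(p)]$ by central characters is natural, but the iteration step has a genuine gap that is not fixed by the observation $H^2(\ints/(p),k^\times)=0$. That vanishing only lets you normalise the cocycle \emph{restricted to} the cyclic layer $\bar Z=L_{h-1}(p)/L_h(p)$; it does not make the image of $\bar Z$ \emph{central} in the twisted algebra $A_\zeta$. Centrality is governed by the antisymmetrisation $\alpha(\bar z,\bar g)/\alpha(\bar g,\bar z)$, which here computes exactly the commutator pairing $[L_{h-1},H]\to L_h\xrightarrow{\zeta}k^\times$ and is generically nontrivial. Concretely, in the Heisenberg case you yourself note $A_\zeta\cong M_p(k)$ for $\zeta$ primitive---and then the centre of $A_\zeta$ is just $k$, so there is \emph{no} further central idempotent decomposition available. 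The scheme ``decompose by characters of the next central layer'' therefore halts after one step, and the strengthened inductive hypothesis cannot be run as stated. (A minor additional point: your idempotents require $p\in k^\times$, so the argument excludes $\rchar(k)=p$; this is harmless for the applications, which use $k=\rats$ or $\reals$, but the lemma is stated for arbitrary $k$.)

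The paper sidesteps this by inducting in the opposite direction: rather than passing to quotients $H(p)/L_j(p)$ (which forces twisted coefficients), it works inside the ordinary subalgebras $k[L_j(p)]\subseteq k[H(p)]$. Writing elements in Mal'cev form, one expands $y=\sum_{e\in\ints^{h-l}} x_1^{e_1}\cdots x_{h-l}^{e_{h-l}}c_e$ with $c_e\in\ints[L_{h+1-l}]$ and shows by induction on $l$ that each nonzero $c_e(p)$ generates a right ideal in $k[L_{h+1-l}(p)]$ of dimension at least $(p-f)^l$. The inductive step is a leading-coefficient argument: if $c_{e'}=\sum_i x_{h-l}^i c_{(e',i)}$ has support of width $m\le f$, then right-multiplying by $x_{h-l}^k d_k$ for $k=0,\dots,p-m-1$ with $d_k\in k[L_{h+1-l}(p)]$ (and conjugating each $d_k$ through $x_{h-l}^k$, using only \emph{normality} of $L_{h+1-l}$) lets the coefficient of $x_{h-l}^k$ sweep out a coset of the right ideal generated by the lowest nonzero $c_{(e',i)}$, giving dimension at least $(p-m)(p-f)^l\ge(p-f)^{l+1}$. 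No semisimplicity, no twisting, and the single constant $f$ is the maximal support width of $y$ in any Mal'cev coordinate, hence depends only on $w$.
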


\begin{proof}
	For $l=0,\ldots,h$ write 
	$$
	y=\sum_{e\in\ints^{h-l}}{x_1^{e_1}\cdots x_{h-l}^{e_{h-l}}c_e}
	$$ 
	for $e=(e_1,\ldots,e_{h-l})$ and $c_e\in\ints[L_{h+1-l}]$.
	We prove by induction on $l$ that for all $e\in\ints^{h-l}$ the right ideal $c_e(p)k[L_{h+1-l}(p)]$ is either zero or has $k$-dimension at least $(p-f)^l$, obtaining the claim for $l=h$ as $c_{\emptyset}=y\neq 0$ and so $y(p)\neq 0$ for $p$ large enough.
	
	For $l=0$ there is nothing to prove, as $c_e$ is either zero or it spans a one-dimensional ideal in $k=k[L_{h+1}]$. Now for the induction step assume the statement is proven for $l\geq 0$. Let $e\in\ints^{h-1-l}$ be arbitrary and write $c_e=\sum_{i\in\ints}{x_{h-l}^i c_{(e,i)}}$. If $c_e=0$, we are done, so assume the opposite. Then, certainly, the set $S\coloneqq\set{i\in\ints}[c_{(e,i)}\neq 0]\neq\emptyset$ is an invariant of $y$ and so of $w$, hence $m\coloneqq\max S-\min S\leq f(w)$ for some function $f$ of $w$. Set $z\coloneqq x_{h-l}^{\min S}$. Since the right $k[L_{h-l}(p)]$-ideals generated by $c_e(p)$ and $(z^{-1}c_e)(p)$ have the same dimension, we may consider the element $u\coloneqq z^{-1}c_e$ instead of $c_e$. This equals $u=\sum_{i=0}^m{x_{h-l}^ic_{(e,i+\min S)}}$. Now it is easy to see that the set of linear combinations $\sum_{k=0}^{p-m-1}{(ux_{h-l}^kd_k)(p)}$ with $d_k\in k[L_{h+1-l}]$ arbitrary for $k=0,\ldots,p-m-1$ generate a $k$-subspace of dimension at least $(p-m)(p-f)^l$. Indeed, by choosing $d_0$ appropriately, one can obtain any element of $c_{(e,\min S)}k[L_{h+1-l}]$ as the left coefficient in $k[L_{h+1-l}]$ of $x_{h-l}^0$. Then, choosing $d_1$ such that $x_{h-l} d_1x_{h-l}^{-1}\in k[L_{h+1-l}]$ is appropriate, one can obtain any left coefficient in front of $x_{h-l}^1$ in some coset of the right ideal $c_{(e,\min S)}k[L_{h+1-l}]$, etc. 
	Since by assumption $p-m\geq p-f$, we are done.
\end{proof}

As a consequence of Lemma~\ref{lem:bd_dim_rt_idl}, we obtain the following immediate corollary.

\begin{corollary}\label{cor:est_p_grps}
	Applying Lemma~\ref{lem:cohom_lem} to $\pi(p)$ as above, we obtain that $\varepsilon(\pi(p))\leq 1-(1-f/p)^h\leq hf/p=hf n^{-1/h}$ for $n=p^h$, where $h=h(w)$ and $f=f(w)$ are defined as above.
\end{corollary}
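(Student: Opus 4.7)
The plan is straightforward combinatorial bookkeeping: I would chain the identity $\varepsilon(\pi(p)) = d(\pi(p))/n$ from Lemma~\ref{lem:cohom_lem} with the lower bound on $\dim \im(d^2(\pi(p)))$ produced by Lemma~\ref{lem:bd_dim_rt_idl}, and then apply Bernoulli's inequality to pass from the closed-form $1-(1-f/p)^h$ to the linear upper bound $hf/p$.

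The first key step is to rewrite $d(\pi(p))$ as a codimension. Since $d(\pi(p)) = \dim_{\reals} H^2(X(\pi(p)),\reals)$ and $H^2(X(\pi(p)),\reals) = C^2(\pi(p))/\im(d^2(\pi(p)))$, and since $\dim_{\reals} C^2(\pi(p)) = \card{H(p)} = n = p^h$ (one $2$-cell per vertex of the Cayley complex), one obtains
$$
\varepsilon(\pi(p)) = \frac{d(\pi(p))}{n} = \frac{p^h - \dim_{\reals} \im(d^2(\pi(p)))}{p^h}.
$$
Hence the whole content reduces to bounding $\dim_{\reals} \im(d^2(\pi(p))) \geq (p - f)^h$.

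The second step invokes Remark~\ref{rem:fox_der}, which $H(p)$-equivariantly identifies $C^2(\pi(p))$ with $\reals[H(p)]$. Under this identification $\im(d^2(\pi(p)))$ is an $H(p)$-invariant right submodule containing $y(p)\reals[H(p)]$, where $y(p)$ is the non-zero element of $\ints[H(p)] \subset \reals[H(p)]$ constructed immediately before Lemma~\ref{lem:bd_dim_rt_idl}. Applying that lemma with $k=\reals$ -- legitimate because $y\in\ints[H]$ has an integer reduction $y(p)\neq 0$ for $p$ large, so one need not pass to positive characteristic -- yields $\dim_{\reals} \im(d^2(\pi(p))) \geq (p-f)^h$. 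Substituting into the previous display gives the first inequality $\varepsilon(\pi(p)) \leq 1 - (1 - f/p)^h$.

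Finally, Bernoulli's inequality $(1 - f/p)^h \geq 1 - hf/p$ (valid once $p \geq f$, which is implicit in the asymptotic regime $p \to \infty$) delivers the upper bound $hf/p$, and the substitution $p = n^{1/h}$ rewrites this as $hf \cdot n^{-1/h}$. No serious obstacle is anticipated: the substantive work has already been absorbed into Lemma~\ref{lem:bd_dim_rt_idl}, and the only point warranting explicit mention is the base-field subtlety above, which is harmless here precisely because the distinguished element $y$ has integer coefficients.
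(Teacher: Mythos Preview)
Your proposal is correct and matches the paper's own proof essentially line for line: the paper likewise combines the comment preceding Lemma~\ref{lem:bd_dim_rt_idl} (that $\dim\im(d^2(\pi(p)))\geq\dim y(p)k[H(p)]$) with Lemma~\ref{lem:bd_dim_rt_idl} itself to obtain $\dim H^2(X(\pi(p)),\reals)\leq p^h-(p-f)^h$, and then normalizes. Your explicit mention of Bernoulli's inequality and the legitimacy of $k=\reals$ are small expansions of steps the paper leaves implicit, but the route is identical.
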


\begin{proof}
	Lemma~\ref{lem:bd_dim_rt_idl} and the comment preceding it imply that 
	$$
	\dim H^2(X(\pi(p)),\reals)\leq p^h-(p-f)^h.
	$$ 
	Normalizing, we obtain the desired identity.
\end{proof}

The homomorphisms $\pi(p)\colon K\twoheadrightarrow H(p)$, for $p\geq p_0=p_0(w)$ a sufficiently large prime, suffice now to prove the quantitative version of Theorem~\ref{thm:main_un_grps} given in the introduction.

Namely, one proves by induction on $n\geq 1$ that for $D=D(w)>0$ sufficiently large, 
$$
d_{\rk}(g,w(\U_n))\leq\varepsilon(n)\coloneqq (D\log(n)+1) n^{-1/h}
$$
for all $g\in\U_n$. Set $\varepsilon(0)\coloneqq 0$.

Indeed, this is true for $n<p_0^h$.
Now for $n\geq p_0^h$ we pick the largest prime $p$ such that $p^h\leq n$ and the largest integer $l\geq 1$ such that $lp^h\leq n$.
Then via the embedding $\U_{n-lp^h}\oplus\U_{p^h}^{\oplus l}\subseteq\U_n$, writing $n_1\coloneqq n-lp^h$ and $n_2\coloneqq lp^h$, we see that
$$
d_{\rk}(g,w(\U_n))\leq \frac{n_1}{n}\varepsilon(n_1)+\frac{n_2}{n}hf/p
$$
for all $g\in\U_n$ by the induction hypothesis and Corollary~\ref{cor:est_p_grps}. Since $p$ is the largest prime such that $p\leq n^{1/h}$, Bertrand's postulate implies that $p\geq n^{1/h}/2$. Moreover, by construction $n_1<n/2$, so that the above term can be bounded by
$$
\frac{1}{2}(D\log(n/2)+1)(n/2)^{-1/h}+2hfn^{-1/h}\leq (D\log(n)+1)n^{-1/h}
$$
again if $D$ is large enough.

\subsection{Further implications}\label{subsec:furth_impl}

It is easy to see that our method of proof implies that $w(\SU_n)$ has width at most two  in $\SU_n$ for $n$ large enough, which was first proven in \cite[Theorem~2.3]{huilarsenshalev2015waring} using Got{\^o}'s trick, Borel's theorem and the representation theory of $\SU_2$. The reason for this is the following basic fact about the linearized permutation representation of the Weyl group $\Sgrp_n$ of $\U_n$.

\begin{lemma}\label{lem:bd_wdth_perm_rep}
	Let $V=\reals^n$ be the permutation representation of $\Sgrp_n$. Let $V_0$ be the subrepresentation of $V$ of all vectors whose entries sum to zero. If $U_1,U_2\leq V_0$ are subspaces and
	$\dim(U_1)+\dim(U_2)\geq n-1$, then $U_1+U_2.\sigma=V_0$ for some $\sigma \in \Sgrp_n$.
\end{lemma}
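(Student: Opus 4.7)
The plan is to reformulate the statement via the Plücker embedding and then invoke the irreducibility of the exterior powers of the standard representation of ${\rm S}_n$. First, after shrinking one of the subspaces if necessary---which is harmless, since enlarging a subspace can only enlarge the sum $U_1 + U_2\sigma$---I may assume $\dim U_1 + \dim U_2 = n-1 = \dim V_0$. Under this assumption, $U_1 + U_2\sigma = V_0$ is equivalent to $U_1 \cap U_2\sigma = \{0\}$. Writing $a = \dim U_1$ and $b = \dim U_2$, and choosing nonzero Plücker representatives $\omega_1 \in \Lambda^a V_0$ of $U_1$ and $\omega_2 \in \Lambda^b V_0$ of $U_2$, the desired conclusion translates into the non-vanishing of $\omega_1 \wedge (\omega_2\sigma) \in \Lambda^{n-1} V_0 \cong \reals$ for some $\sigma \in {\rm S}_n$.

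I would then argue by contradiction: assume $\omega_1 \wedge (\omega_2\sigma) = 0$ for every $\sigma \in {\rm S}_n$. Then the entire orbit $\{\omega_2\sigma : \sigma \in {\rm S}_n\}$, and hence the ${\rm S}_n$-submodule $W \subseteq \Lambda^b V_0$ that it generates, is contained in the proper hyperplane $H \defeq \ker(\omega_1 \wedge \cdot\,) \subsetneq \Lambda^b V_0$. Since $\omega_2 \neq 0$, this $W$ is a nonzero ${\rm S}_n$-invariant subspace of $\Lambda^b V_0$.

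The remaining step, which is the main nontrivial input, is the classical fact that $\Lambda^b V_0$ is an \emph{irreducible} ${\rm S}_n$-representation---namely, the Specht module $S^{[n-b,1^b]}$ for the hook partition $[n-b,1^b]$; this can be verified for instance by a dimension count via the hook-length formula (both sides equal $\binom{n-1}{b}$) together with a character calculation. Irreducibility then forces $W = \Lambda^b V_0$, contradicting $W \subseteq H \subsetneq \Lambda^b V_0$. The only real obstacle is this appeal to irreducibility of the hook Specht module; the rest is short linear algebra and the Plücker-coordinate translation. (The degenerate cases $a = 0$ or $b = 0$ are trivial and can be treated separately.)
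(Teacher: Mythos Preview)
Your proof is correct and follows essentially the same approach as the paper's: both reduce to complementary dimensions, translate the condition $U_1+U_2\sigma=V_0$ into the non-vanishing of a wedge product in $\Lambda^{n-1}V_0$, and then use the irreducibility of $\Lambda^b V_0$ as an ${\rm S}_n$-module (the paper cites \cite[Proposition~3.12]{fultonharris1991representation} for this, where you invoke the hook Specht module) together with non-degeneracy of the wedge pairing to force the orbit of $\omega_2$ to span. The only cosmetic difference is that the paper phrases the last step directly rather than by contradiction.
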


\begin{proof} 
	This is a consequence of the fact that the exterior power $\Lambda^k(V_0)$ is irreducible for $k=0,\ldots,n-1$, see Proposition~3.12 of \cite{fultonharris1991representation}. Note that the determinant pairing
	$h\colon\Lambda^k(V_0) \times \Lambda^{n-1-k}(V_0)\to\reals$ given by $h(v_1\wedge\cdots\wedge v_k,v_{k+1}\wedge\cdots\wedge v_{n-1})=v_1\wedge\cdots\wedge v_{n-1}\in\Lambda^{n-1}(V_0)\cong\reals$, is non-degenerate. Using irreducibility, we can easily see that this implies the claim. Indeed, set $k\coloneqq\dim(U_1)$, choose a basis $u_{1},\ldots,u_{k}$ for $U_1$ and a linearly independent set $u'_{k+1},\ldots,u'_{n-1}$ in $U_2$. Now we have that $h(u_1 \wedge \ldots \wedge u_{k},(u'_{k+1} \wedge \ldots \wedge u'_{n-1}).\sigma) \neq 0$ if and only if $U_1 + U_2.\sigma = V_0$. By irreducibility, the set $\set{(u'_{k+1} \wedge \ldots \wedge u'_{n-1}).\sigma}[\sigma\in \Sgrp_n]$ spans $\Lambda^{n-1-k}(V_0)$. The fact that $h$ is non-degenerate implies the claim.
\end{proof}

We immediately obtain the following corollary.

\begin{corollary}
	Let $w_1,w_2\in\freegrp_2$ be non-trivial. Then $$w_1(\SU_n)w_2(\SU_n)=\SU_n$$ for $n$ sufficiently large.
\end{corollary}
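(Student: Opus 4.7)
The plan is to reduce to the maximal torus of $\SU_n$ and then combine the achievable subspaces for the two words using Lemma~\ref{lem:bd_wdth_perm_rep}. First, if $w_i\notin\freegrp_2'$ for some $i$, then Remark~\ref{rem:surj_wrd_mp} gives $w_i(\SU_n)=\SU_n$, so the corollary holds trivially from $\SU_n\cdot X=\SU_n$ for any non-empty $X\subseteq\SU_n$. Henceforth assume $w_1,w_2\in\freegrp_2'$, which forces $w_i(\U_n)\subseteq\SU_n$ and makes each $w_i(\SU_n)$ invariant under $\SU_n$-conjugation. Since every element of $\SU_n$ is $\SU_n$-conjugate to an element of the maximal torus $T\subseteq\SU_n$ of diagonal matrices, it suffices to show $T\subseteq w_1(\SU_n)\cdot w_2(\SU_n)$.

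Next, I run the $p$-group construction from the proof of Theorem~\ref{thm:main_un_grps} in parallel for $w_1$ and $w_2$: for each large prime $p$, one obtains surjections $\pi_i(p)\colon K_i\twoheadrightarrow H_i(p)$ to a finite $p$-group of order $p^{h_i}$, with $h_i=h(w_i)$ and $f_i=f(w_i)$. For any sufficiently large $n$, tile $\underline{n}$ by independent block decompositions into blocks of size $p^{h_1}$ resp.~$p^{h_2}$ (with a short residue absorbed into a trivial block), and apply Lemma~\ref{lem:cohom_lem} block-by-block. The set of log-phases $\gamma\in\reals^n$ for which $\exp(i\gamma)\in w_i(\U_n)\cap T$ is realized by the monomial construction of Lemma~\ref{lem:cohom_lem} sweeps out a linear subspace $U_i\subseteq V_0=\set{v\in\reals^n}[\sum_j v_j=0]$, where the inclusion $U_i\subseteq V_0$ follows from $w_i\in\freegrp_2'$ (cf.~Remark~\ref{rem:surj_wrd_mp}). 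Summing the per-block estimates of Corollary~\ref{cor:est_p_grps}, the per-block determinant constraints, and the negligible residue shows $\codim_{V_0}(U_i)/n\to 0$ as $n\to\infty$, via precisely the Bertrand's-postulate bookkeeping already used in the induction concluding the proof of Theorem~\ref{thm:main_un_grps}.

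Consequently, for all sufficiently large $n$, $\dim U_1+\dim U_2\geq n-1$, so Lemma~\ref{lem:bd_wdth_perm_rep} provides $\sigma\in {\rm S}_n$ with $U_1+U_2\sigma=V_0$. Given any diagonal $u=\exp(i\gamma)\in T$ (so $\gamma\in V_0$), decompose $\gamma=\alpha+\beta\sigma$ with $\alpha\in U_1$, $\beta\in U_2$. Then $\exp(i\alpha)\in w_1(\U_n)$ and $\exp(i\beta)\in w_2(\U_n)$, and Remark~\ref{rem:surj_wrd_mp} lifts both to elements of $w_i(\SU_n)$. A permutation matrix $P_\sigma$ rescaled by a scalar $n$-th root to lie in $\SU_n$ conjugates $\exp(i\beta)$ to $\exp(i\beta\sigma)$, hence
\[
u=\exp(i\alpha)\cdot \tilde{P}_\sigma \exp(i\beta)\tilde{P}_\sigma^{-1}\in w_1(\SU_n)\cdot w_2(\SU_n),
\]
as required. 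The main obstacle is orchestrating the two $p$-group constructions (with potentially different orders $p^{h_1}\neq p^{h_2}$) on a common index set so that both $U_i$ sit in the same $V_0$ with codimensions summing to $o(n)$; this is handled using the locality of Lemma~\ref{lem:cohom_lem} in the blocks, which permits independent tilings of $\underline{n}$ without spoiling the subspace estimates.
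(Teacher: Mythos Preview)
Your argument is correct and follows essentially the same route as the paper: build achievable log-phase subspaces $U_i\subseteq V_0$ via the cohomological construction of Lemma~\ref{lem:cohom_lem} and Corollary~\ref{cor:est_p_grps}, apply Lemma~\ref{lem:bd_wdth_perm_rep} to get $U_1+U_2\sigma=V_0$, then exponentiate. You are more explicit than the paper in two harmless places: you separate off the case $w_i\notin\freegrp_2'$ via Remark~\ref{rem:surj_wrd_mp} (the paper leaves this implicit), and you spell out the reduction to the diagonal torus and the passage from $w_i(\U_n)$ to $w_i(\SU_n)$; the paper compresses all of this into a single sentence.
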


\begin{proof} 
	Set $U_i\leq w_i(\reals\wr\Sgrp_n)\cap\reals^n\leq V\coloneqq\reals^n$ to be a vector subspace of the diagonal matrices $\reals^n$ which lies in the above $w_i$-image and has maximal dimension with respect to this property ($i=1,2$). In Lemma~\ref{lem:cohom_lem}, Corollary~\ref{cor:est_p_grps}, and the remarks thereafter, we have shown that $\dim(U_i)\geq\frac{n-1}{2}$ for $n$ large enough. Applying Lemma~\ref{lem:bd_wdth_perm_rep} to $U_1,U_2$ and $V$, and exponentiating, we see that for every diagonal matrix $g\in\SU_n$ there are $h_i\in w_i(\SU_n)$ ($i=1,2$) such that $g=h_1h_2^{\sigma}$, so that $w_1(\SU_n)w_2(\SU_n)=\SU_n$.
\end{proof}

\subsection{Concluding remarks}\label{subsec:con_rmk}

Lemma~\ref{lem:cohom_lem} and the above proof of Theorem~\ref{thm:main_un_grps} can be seen as a generalization of the methods used in \cite{elkasapythom2014goto} and clarify various aspects of it. Let us demonstrate this briefly. For a word $w\in\freegrp_2'\setminus\freegrp_2''$ set $\pi\colon K\twoheadrightarrow H\coloneqq\freegrp_2/\freegrp_2'=\ints^2=\gensubgrp{g,h}$ to be the natural homomorphism. Applying Lemma~\ref{lem:d_1*_nontriv} to $\pi$, we see that $d^2(\pi)$ is non-trivial, so again we get an edge $e\in\Gamma(\pi)$ such that $d^2(\pi)(e^\ast)\neq 0$, which corresponds to an element $z=z(g,h)\in  \ints[H]=\ints[\ints^2]=\ints[g^{\pm 1},h^{\pm 1}]$ in the integral group ring. By symmetry, it is no loss to assume that $e$ is rooted at $1_H$ and labeled by $h$.

The Laurent polynomial $p_w(X)$ defined in Section~3 of \cite{elkasapythom2014goto} is now precisely equal to $z^\ast(X,1)=z(X^{-1},1)$, where $z(g,h)=d^2(\pi)(e^\ast)=\pi(\partial w/\partial y)^\ast$ (see Remark~\ref{rmk:fox_der}). Here $z^\ast(X,1)$ is just the image of $z$ under the homomorphism $\ints[\ints^2]=\ints[g^{\pm 1},h^{\pm 1}]\twoheadrightarrow\ints[\ints]=\ints[X^{\pm1}]$ induced by $g^a h^b\mapsto X^{-a}$, as, e.g., for $w=[x^a, y^b]=x^{-a}y^{-b}x^a y^b$, $a,b>0$ we have $z(g,h)=(h+\cdots+h^b)(1-g^a)$ and $p_w(X)=-b(X^{-a}-1)$. 

Now we can find a suitable homomorphism $\varphi\colon\ints^2=\gensubgrp{g,h}\twoheadrightarrow\ints=\gensubgrp{X}$ such that the induced ring homomorphism $\ints[\ints^2]=\ints[g^{\pm1},h^{\pm1}]\twoheadrightarrow\ints[\ints]=\ints[X^{\pm 1}]$ maps $z$ to a non-zero element $\varphi(z)=p(X)$ (e.g., take as the kernel of the homomorphism $\varphi$ a saturated copy of $\ints$ in $\ints^2$ which does not hit any element in the support of $z$). For $n\in\ints_+$ we define the homomorphism $\pi(n)\colon K\twoheadrightarrow H(n)=\ints/(n)$ just by composing $\varphi\circ\pi$ with the natural projection $\ints\twoheadrightarrow\ints/(n)$. 
One now quickly derives the conclusions of Lemma~3.1, Corollary~3.2, and Proposition~3.8 of \cite{elkasapythom2014goto} from the following lemma.

\begin{lemma}
	Let $p(X)$ be as above. Write $z(n)$ for the image of $z$ in $\ints[H(n)]$.
	Define $W_n\coloneqq\set{\omega\in\complex}[p(\omega)=0\text{ and }\omega^n=1]$.
	The (right) ideal $z(n)\reals[H(n)]$ has codimension $\card{W_n}$, so in particular, if the least prime dividing $n$ is large enough, then it has codimension one and the word map $w$ on $\SU_n$ is surjective by Lemma~\ref{lem:cohom_lem} and Remark~\ref{rmk:surj_wrd_mp}.
\end{lemma}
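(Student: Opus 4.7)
The plan is to exploit the cyclicity of $H(n) = \ints/(n)$ by identifying its complex group algebra with a quotient of a Laurent polynomial ring. Fixing a generator of $H(n)$ as the image of $X$ yields a ring isomorphism $\complex[H(n)] \cong \complex[X^{\pm 1}]/(X^n - 1) \cong \complex[X]/(X^n - 1)$. Under this identification the element $z(n)$ is precisely the reduction of $p(X)$ modulo $X^n - 1$, so the right ideal $z(n)\complex[H(n)]$ corresponds to the principal ideal $\bigl(p(X)\bigr)$ in $\complex[X]/(X^n - 1)$.

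Next, I would use the factorization $X^n - 1 = \prod_{\omega^n = 1}(X - \omega)$ into pairwise coprime linear factors over $\complex$ and invoke the Chinese remainder theorem, obtaining the evaluation isomorphism
$$
\complex[X]/(X^n - 1) \xrightarrow{\ \sim\ } \prod_{\omega^n = 1} \complex, \qquad f(X) \longmapsto \bigl(f(\omega)\bigr)_{\omega^n = 1}.
$$
Under this isomorphism, $p(X)$ maps to the tuple $\bigl(p(\omega)\bigr)_{\omega^n = 1}$, and its principal ideal is the set of tuples supported on $\set{\omega : p(\omega) \neq 0}$. The codimension of this ideal equals the number of vanishing coordinates, i.e., $\card{W_n}$, which settles the main claim.

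For the consequence about surjectivity, I would first check that $1 \in W_n$ for every $n$. Since $w \in \freegrp_2'$, any element of $\im d^2(\pi)$ lies in the augmentation ideal (as observed in Remark~\ref{rem:surj_wrd_mp}), and augmentation is preserved by the ring homomorphism induced by $\varphi$; hence $p(1) = 0$. On the other hand, $p$ has only finitely many complex roots, so only finitely many of them are roots of unity; let $M = M(w)$ be the maximum of the orders of those that are (take $M = 1$ if none exist apart from $1$ itself). If the least prime divisor of $n$ exceeds $M$, then every $n$-th root of unity $\omega \neq 1$ has order greater than $M$ and cannot be a root of $p$, forcing $W_n = \set{1}$ and codimension one. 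Since $z(n)\complex[H(n)] \subseteq \im d^2(\pi(n))$, the image has codimension at most one, while Remark~\ref{rem:surj_wrd_mp} forces it to be at least one; thus $d(\pi(n)) = 1$, and Remark~\ref{rem:surj_wrd_mp} delivers surjectivity of $w$ on $\SU_n$.

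The hard part will be bookkeeping rather than any genuine obstruction: one must keep straight the various identifications (the Fox-derivative description of $d^2$ from Remark~\ref{rem:fox_der}, the chain of reductions $\ints[H] \to \ints[\ints] \to \ints[H(n)]$, and the CRT decomposition) and verify that evaluation at $n$-th roots of unity really computes the codimension of a principal ideal in the cyclic group algebra. Once those identifications are in place, the codimension count is an immediate consequence of CRT, and the ``large least prime'' criterion is merely the observation that roots of unity of sufficiently high order cannot annihilate a fixed polynomial with finitely many zeros.
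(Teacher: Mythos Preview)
Your argument is correct and follows the same route as the paper: identify $\complex[H(n)]$ with $\complex[X]/(X^n-1)$, apply the Chinese remainder theorem to decompose as $\prod_{\omega^n=1}\complex$, and read off the codimension of the principal ideal generated by $z(n)$ as the number of $n$th roots of unity killing $p$. The only minor difference is in the ``large enough least prime'' step: the paper exploits that $p(X)\in\ints[X^{\pm1}]$, so any primitive $m$th root of unity ($m>1$, $m\mid n$) annihilating $p$ would force $\Phi_m\mid p$, whence $\varphi(m)\leq\deg p$; this gives the explicit threshold $p-1>\deg p(X)$ for the least prime divisor. Your finiteness argument (bound the orders of the finitely many roots of unity among the zeros of $p$) is equally valid but non-effective.
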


\begin{proof}
	By the Chinese remainder theorem, we have the isomorphism 
	$$
	\reals[H(n)]=\reals[\ints/(n)]\cong\reals[X]/(X^n-1)\cong\bigoplus_{\substack{\chi\mid X^n-1\\\chi\text{ irreducible}}}{\reals[X]/(\chi)}\cong\reals^{e_n}\oplus\complex^{\ceil{n/2}-1},
	$$ 
	where $e_n=2$ if $n$ is even and $e_n=1$ if $n$ is odd. This holds, since the (monic) irreducible polynomials $\chi\mid X^n-1$ are either of the form $X\pm1$ (so that $\reals[X]/(\chi)\cong\reals$) or of the form $(X-\omega)(X-\overline{\omega})$ for $\omega\in\complex\setminus\reals$ an $n$th root of unity (so that $\reals[X]/(\chi)\cong\complex$). The last isomorphism in the above equation is given by $\overline{X}\mapsto(\omega_\chi)_\chi$, where $\omega_\chi$ is a root of $\chi$. Hence the ideal generated by $z(n)$ has as codimension precisely the number of $n$th roots $\omega$ for which $p(\omega)=0$, as claimed. The second claim follows from the fact that $p(X)\in\ints[X^{\pm 1}]$ and the minimal polynomial of a primitive $m$th root of unity, $m>1$ dividing $n$, over $\rats$ is the cyclotomic polynomial $\Phi_m(X)$ of degree $\varphi(m)\geq p-1$, where $p$ is the least prime divisor of $n$. Hence, if $p-1>\deg(p(X))$, we have $W_n=\set{1}$. This completes the proof.
\end{proof}

The above shows that the result from \cite{elkasapythom2014goto} is precisely the simplest application of Lemma~\ref{lem:cohom_lem}, namely when $G=H(n)$ is taken to be cyclic.
We can now also understand that Question~4.4 from \cite{elkasapythom2014goto} has a negative answer. Indeed, assume that for every choice of $\varphi\colon\ints^2=\gensubgrp{g,h}\twoheadrightarrow\ints=\gensubgrp{X}$ in the above construction, the image $\im(d^2(\pi(n)))\subseteq\reals[H(n)]\cong\reals[X]/(X^n-1)$ has codimension greater than one, i.e., $d^2(\varphi\circ\pi)(\varphi(g)\cdot 1)$ and $d^2(\varphi\circ\pi)(\varphi(h)\cdot 1)$ as Laurent polynomials in $\ints[X^{\pm1}]$ have a non-trivial $n$th root of unity as a common root. Now choose $\alpha\in\Aut(\freegrp_2)$ arbitrary. Replacing $w$ by $\alpha(w)$ will not improve this situation. To see this, set $x'\coloneqq\alpha(x)$, $y'\coloneqq\alpha(y)$
and obtain by the chain rule
\begin{align*}
d^2_{\alpha(w)}(\varphi\circ\pi)(\varphi(g)\cdot 1) &=\varphi\circ\pi\left(\frac{\partial w}{\partial x}(x',y')\frac{\partial x'}{\partial x}+\frac{\partial w}{\partial y}(x',y')\frac{\partial y'}{\partial x}\right)^\ast,\\
d^2_{\alpha(w)}(\varphi\circ\pi)(\varphi(h)\cdot 1) &=\varphi\circ\pi\left(\frac{\partial w}{\partial x}(x',y')\frac{\partial x'}{\partial y}+\frac{\partial w}{\partial y}(x',y')\frac{\partial y'}{\partial y}\right)^\ast,
\end{align*}
where $d^2_{\alpha(w)}(\varphi\circ\pi)$ denotes the map corresponding to $d^2(\varphi\circ\pi)$ but with $\alpha(w)$ in the role of $w$. But then, as $\alpha$ is an automorphism, we see that $\varphi\circ\pi(x')=X^a$ and $\varphi\circ\pi(y')=X^b$ with $a,b\in\ints$ coprime (as they must generate $\ints=\gensubgrp{X}$). Hence
$$
\varphi\circ\pi\left(\frac{\partial w}{\partial x}(x',y')\right)=\frac{\partial w}{\partial x}(X^a,X^b) \quad\text{resp.}\quad\varphi\circ\pi\left(\frac{\partial w}{\partial y}(x',y')\right)=\frac{\partial w}{\partial y}(X^a,X^b),
$$
which is equal to $d^2(\varphi'\circ\pi)(\varphi'(g)\cdot1)^\ast$ resp.\ $d^2(\varphi'\circ\pi)(\varphi'(h)\cdot1)^\ast$, where $\varphi'\colon\ints^2=\gensubgrp{g,h}\twoheadrightarrow\ints=\gensubgrp{X}$ is given by $g\mapsto X^a$, $h\mapsto X^b$ (see Remark~\ref{rmk:fox_der}). But these two expressions seen as Laurent polynomials in $\ints[X^{\pm1}]$ by our assumption have a non-trivial $n$th root of unity $\omega$ as a common root. But then, by the above equations, $d^2_{\alpha(w)}(\varphi\circ\pi)(\varphi(g)\cdot 1)$ and $d^2_{\alpha(w)}(\varphi\circ\pi)(\varphi(h)\cdot 1)$ also must have $\omega$ as a root, so that the image $\im(d^2_{\alpha(w)}(\pi(n)))\subseteq\reals[H(n)]\cong\reals[X]/(X^n-1)$ has codimension greater than one.

In retrospect, as has been pointed out to us by Jack Button, the study in \cite{elkasapythom2014goto} would have been much clearer, when the connection to Fox calculus and the even more classical subject of Alexander polynomials would have been observed from the start.

%Problematic words in $\freegrp_2'\setminus\freegrp_2''$ for which the above method does not give surjectivity for sufficiently large $n$ are hence, e.g., $w=v(x^m,y^m)$ for $m>1$ for which $\pi(\partial w/\partial x)$ and $\pi(\partial w/\partial y)$ are both divisible by $\Phi_m(X)$ and $\Phi_m(Y)$, so that $p(X)$ will always have a root of unity of order divisible by $m$ as a root.
%But of course, these are not the most convincing examples, as $v$ might be surjective (and so would $w$, since $x^m$ is surjective). Are there others, which are not easily seen to be eventually surjective?

\vspace{0.1cm}

Let us end this section by drawing some further connections to related facts.
In case that $K$ is residually finite, one could also prove Theorem~\ref{thm:main_un_grps} using L{\"u}ck's approximation theorem together with the fact that the second $\ell^2$-Betti number of a one-relator group is zero by a well-known result of Dicks and Linnell~\cite{dickslinnell2007l2} -- or the validity of the $\ell^2$-zero divisor conjecture for torsionfree nilpotent groups applied to $H$ (see \cite{lueck2002L2} for more background). However, our argument is much more explicit and does even give an effective estimate.

\section{Finite groups of Lie type}\label{sec:fin_grps_lt}

In this section, we prove Theorem~\ref{thm:main_thm_fin_grp_lt} using aspects presented in Sections~\ref{sec:sym_grps} and \ref{sec:un_grps} -- for convenience of the reader we decided to present the proof first in the case of unitary groups $\U_n$, where the methods come into play in the most natural way. However, we will now use the same cohomological method as in Lemma~\ref{lem:cohom_lem} together with Lemmas~\ref{lem:d_1*_nontriv} and \ref{lem:bd_dim_rt_idl} of Section~\ref{sec:un_grps}, but instead of using the additive group of $\reals$ as our coefficient group, we now will use groups of type $(\finfield_q[X]/(\chi))^\times$ for $\chi\in\finfield_q[X]$ some polynomial. Indeed, we will need the following modified version of Lemma~\ref{lem:bd_dim_rt_idl}, which is an easy consequence of it.

\begin{corollary}\label{cor:bd_dim_rt_idl_ints}
	In the setting of Lemma~\ref{lem:bd_dim_rt_idl}, using coefficients in $\ints$ instead of the field $k$, there are a non-zero $c\in\nats$ and $f\in\nats$ such that for all primes $p$, there exists a subset $C\subseteq H(p)$ of at least $(p-f)^h$ coordinates so that the projection of the right ideal $y(p)\ints[H(p)]$ onto $\ints[C]$ contains the $\ints$-module $(c\ints)[C]$. Here $\ints[C]$ are all $\ints$-linear combinations of elements from $C$ inside the ring $\ints[H(p)]$. Moreover, $(c\ints)[C]\coloneqq c(\ints[C])$.
\end{corollary}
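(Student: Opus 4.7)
The plan is to reduce Corollary~\ref{cor:bd_dim_rt_idl_ints} to a surjectivity statement via a reduction-modulo-$q$ argument. First, I would write $y = c \cdot y'$ where $c \in \nats$ is the gcd of the integer coefficients of $y$ and $y'$ has coprime coefficients (i.e., content~$1$). Since $y(p)\ints[H(p)] = c \cdot y'(p)\ints[H(p)]$, the corollary for $y$ reduces to finding $C \subseteq H(p)$ with $|C| \geq (p-f)^h$ such that $\pi_C(y'(p)\ints[H(p)]) = \ints[C]$; the final constant is then exactly $c$, an invariant of $y$ (hence of $w$).

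For this integral surjectivity, I would argue prime by prime. Since $y'$ has content $1$, its image in $\finfield_q[H]$ is nonzero for every prime $q$, so Lemma~\ref{lem:bd_dim_rt_idl} applies with base field $\finfield_q$. Its constructive proof produces an explicit subset $C_q \subseteq H(p)$ of size $\geq (p-f_q)^h$ for which $\pi_{C_q}$ is surjective in characteristic $q$. The combinatorial data defining $C_q$ (the ``leading indices'' $i_0$ and ``gap widths'' $m$ at each level of the recursion) is determined by the pattern of vanishing of the integer coefficients of $y'$ modulo $q$; since $y'$ has finite support, only finitely many such patterns arise as $q$ ranges over all primes. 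Intersecting the corresponding finitely many $C_q$ at each level of the inductive construction yields a single universal subset $C \subseteq \bigcap_q C_q$ with $|C| \geq (p-f)^h$ for a uniform $f = f(w)$. Because $C \subseteq C_q$, the surjection $\pi_{C_q}: y'(p)\finfield_q[H(p)] \twoheadrightarrow \finfield_q[C_q]$ composes with the further projection $\finfield_q[C_q] \twoheadrightarrow \finfield_q[C]$ to give $\pi_C$ surjective in characteristic $q$.

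Hence the finite abelian group $\ints[C]/\pi_C(y'(p)\ints[H(p)])$ has no $q$-torsion for any prime $q$ and must be trivial, so $\pi_C$ is surjective over $\ints$. Scaling by the factor $c$ yields $(c\ints)[C] \subseteq \pi_C(y(p)\ints[H(p)])$, as required. The main obstacle is the bounded-loss intersection step: one must verify that the product-of-intervals structure of each $C_q$, as produced recursively through the filtration $(L_j)$ in the proof of Lemma~\ref{lem:bd_dim_rt_idl}, is stable enough across primes $q$ that the intersection over the finitely many reduction patterns still has size at least $(p-f)^h$ for a uniform $f = f(w)$, and (for completeness) to handle the finitely many small primes $p$ separately by enlarging $c$ if necessary.
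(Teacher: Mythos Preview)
Your approach is genuinely different from the paper's, and in an important respect it is more complete. The paper's proof is very short: it applies Lemma~\ref{lem:bd_dim_rt_idl} over $k=\rats$ to obtain a set $C$ of size at least $(p-f)^h$ onto which $y(p)\rats[H(p)]$ projects surjectively, and then simply takes $c$ to be the least common multiple of the denominators needed to write each unit vector of $\rats[C]$ as $\pi_C(y(p)r)$. What the paper does \emph{not} argue is why this $c$ can be taken independent of $p$; if one naively inverts the block-triangular system coming from the proof of Lemma~\ref{lem:bd_dim_rt_idl}, the denominators are powers of the leading integer coefficient with exponent growing like $p$. Your reduction-mod-$q$ strategy---factor out the content, apply Lemma~\ref{lem:bd_dim_rt_idl} over every $\finfield_q$, and conclude that the integral cokernel has no $q$-torsion for any $q$---is precisely what is needed to pin down a uniform $c$.

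Your flagged obstacle, the intersection step, is real but admits a simplification you may have missed. The support of $y'$ is finite and its nonzero integer coefficients are fixed, so for all primes $q$ not dividing any of these coefficients the vanishing pattern mod~$q$ is identical to the pattern over $\rats$; for such $q$ the recursive construction in Lemma~\ref{lem:bd_dim_rt_idl} chooses the same pivots at every level and hence produces $C_q=C_\rats$. Only finitely many exceptional primes $q$ remain, and for each of these Lemma~\ref{lem:bd_dim_rt_idl} still yields \emph{some} $C_q$ of size at least $(p-f)^h$. Thus you only have to intersect $C_\rats$ with these finitely many exceptional $C_q$, and since each has complement of size $O(p^{h-1})$ in $H(p)$, the intersection still has size at least $(p-f')^h$ for a uniform $f'$. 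This avoids having to analyze the recursive product-of-intervals structure across varying pivot choices. Your final remark about small $p$ is harmless but unnecessary: for $p\le f$ the bound $(p-f)^h\le 0$ makes the statement vacuous.
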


\begin{proof}
	Applying Lemma~\ref{lem:bd_dim_rt_idl} to $k=\rats$, we get a set $C\subseteq H(p)$ of coordinates of size $\card{C}\geq(p-f)^h$ such that $y(p)\rats[H(p)]$ projects surjectively on these. Hence we generate the unit vectors in $\rats[H(p)]/\rats[H(p)\setminus C]$. So multiplying by the least common multiple $c$ of the denominators of the involved scalars, we obtain that the projection of $y(p)\ints[H(p)]$ onto the coordinates $C$ still contains the module $(c\ints)[C]$. 
\end{proof}

Subsequently, we fix the symbol $c$ to be the constant from Corollary~\ref{cor:bd_dim_rt_idl_ints}.
For a polynomial $\chi\in k[X]$ for a field $k$, write $F(\chi)$ for the \emph{Frobenius block} associated to $\chi$, that is the matrix of multiplication by $X$ in $k[X]/(\chi)$ with respect to the standard monomial basis. Similarly, for $\lambda\in\overline{k}$ write $J_e(\lambda)$ for the \emph{Jordan block} of size $e$ with respect to $\lambda$, that is the matrix of multiplication by $\lambda+X$ in $k[\lambda,X]/(X^e)$ with respect to the basis $1,\overline{X},\ldots,\overline{X}^{e-1}$. Call a polynomial $\chi\in k[X]$ \emph{primary} if it is a power of an irreducible polynomial, i.e., if the ideal it generates is primary. Recall that for an element $g\in\End(V)$, $V$ is irreducible resp.\ indecomposable resp.\ cyclic as a $k[X]$-module, where $X$ acts as $g$, if and only if $g\cong F(i)$ resp.\ $g\cong F(\chi)$ resp.\ $\bigoplus_{i=1}^l{F(\chi_i)}\cong F(\chi_1\cdots\chi_l)$ for an irreducible polynomial $i\in k[X]$ resp.\ a primary polynomial $\chi\in k[X]$ resp.\ pairwise coprime primary polynomials $\chi_1,\ldots,\chi_l\in k[X]$.

\subsection{The linear case.}\label{subsec:lin_cs}

We start by proving Theorem~\ref{thm:main_thm_fin_grp_lt} in the case when $G=\GL_n(q)$. We consider here the more general case that $G=\GL_n(k)$ for an arbitrary field $k$. So let $V=k^n$ be the natural module of $G$. We use the same approach as in Subsection~\ref{subsec:prf_mn_thm_sym_grps}, first approximating isotypic elements $g\in\GL(V)$ by word values, i.e., we first assume that $V$ is the direct sum of isomorphic cyclic $k[X]$-submodules so that $g\cong F(\chi)^{\oplus c_\chi}$ for some polynomial $\chi\in k[X]$ of degree $m$, and then deducing the general case by using the Frobenius normal form and Jensen's inequality.

\emph{The isotypic case.} So let $\chi$, $c_\chi$, and $m$ be as previously mentioned. We want to approximate $F(\chi)$-isotypic elements by word values with these parameters, so that $n=c_\chi m$. As in Subsection~\ref{subsec:prf_mn_thm_sym_grps}, we distinguish two cases, one in which $m$ is small and one in which it is large (compared to $c_\chi$).

\emph{Estimate for small $m$.} In view of Corollary~\ref{cor:bd_dim_rt_idl_ints}, we need the following auxiliary fact.

\begin{lemma}\label{lem:pow_jor_blks}
	It holds that $F(\chi(X^c))^c\cong F(\chi)^{\oplus c}$.
\end{lemma}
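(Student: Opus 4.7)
The plan is to unravel the definitions on both sides and to exhibit a direct sum decomposition of the underlying polynomial quotient as a free module over a subring.

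First I would unpack what is being asserted. By definition $F(\chi(X^c))$ is multiplication by $X$ on $R \defeq k[X]/(\chi(X^c))$, so $F(\chi(X^c))^c$ is multiplication by $X^c$ on $R$. Setting $Y \defeq X^c$ and viewing $R$ as a $k[Y]$-module (with $Y$ acting as $X^c$), the claim becomes the $k[Y]$-module isomorphism
$$R \cong \bigl(k[Y]/(\chi(Y))\bigr)^{\oplus c}.$$
The right-hand side is precisely $F(\chi)^{\oplus c}$, so this is what has to be established.

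Next I would use the standard fact that $k[X]$ is a free $k[Y]$-module of rank $c$, with basis $1, X, X^2, \ldots, X^{c-1}$, yielding
$$k[X] = \bigoplus_{i=0}^{c-1} X^i\, k[Y].$$
The crucial observation is that the ideal $(\chi(X^c)) = (\chi(Y))$ of $k[X]$ is generated by an element of $k[Y]$, so it respects this decomposition, i.e.,
$$(\chi(Y)) = \bigoplus_{i=0}^{c-1} X^i\, \chi(Y)\, k[Y].$$
Passing to the quotient I would therefore obtain
$$R = \bigoplus_{i=0}^{c-1} X^i \cdot \bigl(k[Y]/(\chi(Y))\bigr),$$
and each summand $X^i\cdot(k[Y]/(\chi(Y)))$ is a $k[Y]$-submodule isomorphic to $k[Y]/(\chi(Y))$ via multiplication by $X^i$ (with inverse given by multiplication by $X^{c-i}$ combined with the relation $X^c = Y$ on the quotient).

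There is really no serious obstacle here; the whole content is a change of variables together with the observation that the ideal $(\chi(Y))$ is already defined over the subring $k[Y]$, so the quotient inherits the rank-$c$ free decomposition. The only thing to be a little careful about is verifying that each translate $X^i\cdot(k[Y]/(\chi(Y)))$ is stable under multiplication by $Y = X^c$ (trivially true, since $Y\in k[Y]$) and isomorphic as a $k[Y]$-module to $k[Y]/(\chi(Y))$, which is immediate because multiplication by $X^i$ is $k[Y]$-linear and bijective between the summands.
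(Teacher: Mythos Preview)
Your proof is correct and follows essentially the same approach as the paper: both identify $F(\chi(X^c))^c$ with multiplication by $X^c$ on $k[X]/(\chi(X^c))$ and then decompose this quotient as $\bigoplus_{i=0}^{c-1} X^i\cdot k[X^c]/(\chi(X^c))$. The only minor difference is that the paper justifies the directness of the sum by a dimension count, whereas you argue (slightly more explicitly) that the ideal $(\chi(X^c))$ is already defined over the subring $k[Y]=k[X^c]$ and hence respects the free decomposition of $k[X]$ over $k[Y]$; both justifications are fine.
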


\begin{proof}
	The block $F(\chi(X^c))$ is the matrix of multiplication by $X$ in the ring $k[X]/(\chi(X^c))$, so that $F(\chi(X^c))^c$ is the multiplication by $X^c$ in $k[X]/(\chi(X^c))$. But $k[X]/(\chi(X^c))=\bigoplus_{i=0}^{c-1}\overline{X}^i\gensubalg{\overline{X}^c}_k$ holds for dimension reasons, so that the claim follows.
\end{proof}

Now we use the same idea as in Lemma~\ref{lem:cohom_lem} with appropriate coefficient group. Consider the ring $R\coloneqq k[X]/(\chi(X^c))$ and write $c_\chi=rc+s$ for $r\in\nats$ and $0\leq s<c$.
Corollary~\ref{cor:bd_dim_rt_idl_ints} and Lemma~\ref{lem:pow_jor_blks} give us that in $R^\times\wr\Sym(r)\subseteq\GL_{cmr}(k)$ we have that $w(R^\times\wr\Sym(r))$ approximates the block diagonal matrix $(F(\chi(X^c))^{\oplus r})^c\cong F(\chi)^{\oplus cr}$ up to an error of $d(1/r)$.
Hence, since the function $d$ is concave, we obtain
$$
d_{\rm rk}(g,w(\GL_n(k)))\leq \frac{cr}{c_\chi}d(1/r)+s/c_\chi<d(c/c_\chi)+c/c_\chi.
$$

\emph{Estimate for large $m$.} On the other hand, the matrices $F(\chi)$ and $F(X^m-1)$ differ only in the last row, so by rank one. The last matrix is the permutation matrix of an $m$-cycle, which we can approximate by word values by the result for symmetric groups (Theorem~\ref{thm:main_sym_grps}). Hence $d_{\rm rk}(F(\chi),F(X^m-1))\leq 1/m$, implying that
$$
d_{\rk}(g,w(\GL_n(k)))<d(1/m)+1/m.
$$

\emph{Global estimate for isotypic elements.} Now we combine both estimates, as in the proof for symmetric groups.
Using the first estimate if $m\leq\sqrt{n/c}$ and the second in the opposite case, we obtain 
$$
d_{\rk}(g,w(\GL_n(q)))<d\left(\sqrt{c/n}\right)+\sqrt{c/n}
$$
as wished. Subsequently, in analogy to the proof of Theorem~\ref{thm:main_sym_grps} in Section~\ref{sec:sym_grps}, write $d_{\rm it}$ for the function of $1/n$ on the right.

\emph{The general case.} Using the Frobenius normal form we can write $g\cong\bigoplus_{m\geq 1}{F(\chi_m)^{\oplus c_m}}$, $\chi_m$ being the invariant factor of degree $m$ and $c_m\coloneqq c_{\chi_m}$.

Now we can finish the proof. Writing $n_m\coloneqq c_m m$, we get that

\begin{align*}
d_{\rk}(g,w(\GL_n(k)))&\leq\sum_{m\geq 1}{\frac{n_m}{n}d_{\rm it}(1/n_m)}\\
&\leq d_{\rm it}\left(\sum_{m\geq 1}{1/n}\right)\leq d_{\rm it}\left(\sqrt{2/n}\right).
\end{align*}
as in the end of Subsection~\ref{subsec:prf_mn_thm_sym_grps}, as desired.

\begin{remark}
	Similarly to the symmetric case, one verifies that such a bound is also attained for power words $w=x^p$ when $\rchar(k)=p$.
\end{remark}

\subsection{The case of nearly simple groups of Lie type stabilizing a form.}

We proceed by proving Theorem~\ref{thm:main_thm_fin_grp_lt} for nearly simple groups of Lie type of unbounded rank which stabilize a form, i.e., subsequently $G$ is of the form $\Sp_{2n'}(q)$, $\GO_{2n'+1}(q)$, $\GO_{2n'}^{\pm}(q)$ or $\GU_n(q)$ ($n\geq 2$, $n'\geq 1$).

Subsequently, let $k=\finfield_q$ if $G$ is one of $\Sp_{2n'}(q)$, $\GO_{2n'+1}(q)$, $\GO_{2n'}^{\pm}(q)$, and $k=\finfield_{q^2}$ in case $G=\GU_n(q)$ for some $n\geq 2$ or $n'\geq 1$. Write $(V,f)$ for the natural module of $G$, i.e., $V=k^n$ and $f$ is a non-singular alternating bilinear form ($G=\Sp_{2n'}(q)$), a non-singular symmetric bilinear form ($G=\GO_{2n'+1}(q)$ or $G=\GO_{2n'}^\pm(q)$, $q$ odd), or a conjugate-symmetric sesquilinear form ($G=\GU_n(q)$). In case that $p\coloneqq\rchar(k)=2$, and $G=\GO_n(q)$, $f$ comes from a non-singular quadratic form $Q$, so that $f$ is alternating. Note that we can neglect the case that $G=\GO_{2n'+1}(q)$ for $q$ even ($n'\geq 1$), where $f$ is singular, since then $G\cong\Sp_{2n'}(q)$ via $g\mapsto\overline{g}\in\GL(V/\rad(f))$ (see, e.g., \cite{wilson2009finite}).

In the unitary case, $f$ is semilinear in the second entry with respect to the $q$-Frobenius endomorphism $x\mapsto x^q$. The sign $\varepsilon\in\set{\pm1}$ is defined to be $+1$ if $f$ is symmetric bilinear or conjugate-symmetric sesquilinear, and to be $-1$ if $f$ is alternating. Similarly, the automorphism $\sigma$ of $k$ is defined to be the $q$-Frobenius endomorphism in the unitary case, and the identity in the bilinear case.

For a fixed $g\in G$, which we want to approximate by word values, subsequently consider $(V,f)$ as a $k[X]$-module, where $X$ acts as $g$. A non-singular submodule of $V$ is said to be \emph{orthogonally indecomposable} if it is not an orthogonal direct sum of non-trivial proper submodules (with respect to the form $f$).

In analogy to the linear case, $V$ is the orthogonal direct sum of such submodules. Hence, following the same strategy as in Subsection~\ref{subsec:lin_cs}, we first consider the case when $V$ is itself orthogonally indecomposable. We recall the classification of such modules $V$ (all statements are well known and are, e.g., used in \cite[Section 6]{liebeckshalev2001diameters}; see also \cite{wall1963conjugacy} and \cite{gonshawliebeckobrien2017unipotent} for the unipotent case; for a unified treatment we refer to Subsection~3.4.2~\S1 of \cite{schneider2019phd}). For a monic polynomial $\chi=a_0+a_1X+\cdots+a_{m-1}X^{m-1}+X^m\in k[X]$, write $\chi^\ast\coloneqq a_0^{-\sigma}X^m\chi^\sigma(X^{-1})$ for its \emph{dual polynomial}, where $\chi^\sigma$ is the polynomial $\chi$ with coefficients twisted by $\sigma$, and say $\chi$ is self-dual if $\chi=\chi^\ast$. For a module $U$, write $U^\ast$ for the module of $\sigma$-semilinear functionals on $U$.\vspace{0.2cm}

\subsubsection{Structure of orthogonally indecomposable modules}\label{subsubsec:str_indec}

We distinguish into three cases (for a detailed discussion of these, we refer to~\cite{schneider2019phd}).

\emph{Case 1: The non-self-dual case.} $V\cong U\oplus U^\ast$, where $\rest{g}_U\cong F(i^e)$ and $\rest{g}_{U^\ast}\cong F(i^{\ast e})$ for $i\in k[X]$ non-self-dual irreducible and $e\geq 1$ (so that $g\cong F((ii^\ast)^e)$ on $V$). The form $f$ is given by the dual pairing $f(u,u^\ast)=\varepsilon f(u^\ast,u)^\sigma=(u^\ast(u))^\sigma$ for $u\in U, u^\ast\in U^\ast$ and $\rest{f}_U=\rest{f}_{U^\ast}=0$.
So $f$ is uniquely determined by $g$ up to equivalence.

\emph{Case 2: The self-dual case when $i\neq X\pm 1$.} In this case, $g\cong F(\chi)=F(i^e)$ for self-dual polynomials $\chi$ resp.\ $i\neq X\pm1$ which are primary resp.\ irreducible. Again $f$ is uniquely determined by $g$ here. Set $C\coloneqq k[X]/(\chi)$ and let $\alpha\in\Aut(C)$ be the map inducing $\sigma$ on $k$ and sending $\nu\coloneqq\overline{X}$ to $\nu^{-1}$ (which is an automorphism, since $\chi$ is self-dual). Then identifying $V$ with $C$, $f$ is given by $(u,v)\mapsto\ell(uv^\alpha)$, where $\ell\colon C\to k$ is an appropriate linear form such that $\ell(u)^\sigma=\varepsilon\ell(u^\alpha)$ for $u\in C$.

Let $\lambda\in\overline{k}$ be a root of $i$. First assume that $f$ is bilinear. Then, since $i\neq X\pm 1$, we have that $\lambda\neq\lambda^{-1}$, so that $d=\deg(i)$ is even, i.e., $i=i'(X+X^{-1})X^{d/2}$ for an irreducible polynomial $i'\in k[X]$. Next assume that $f$ is $\sigma$-sesquilinear and $\lambda^2\neq 1$. Consider the field extension $k[\lambda]\supset k\supset k_\sigma$. The automorphism $\alpha$ of $C$ descends to an automorphism $\overline{\alpha}$ of $k[\lambda]$ which maps $\lambda\mapsto\lambda^{-1}$ and restricts to $\sigma\colon x\mapsto x^q$ on $k=\finfield_{q^2}$. As $k[\lambda]=\finfield_{q^{2d}}$ is a finite field, $\overline{\alpha}$ is the unique involution $x\mapsto x^{q^d}$. But $\overline{\alpha}$ must induce $\sigma$ on $k$, so that $d$ must be odd. Now one observes that $k[\lambda]\supset k_\sigma$ must be of even degree, as its Galois group contains the involution $\overline{\alpha}$, so that the minimal polynomial of $\lambda$ over $k_\sigma=\finfield_q$ is (up to constant factor) $ii^\sigma$ of degree $2d$. Again define the $k_\sigma$-irreducible polynomial $i'\in k_\sigma[X]$ of degree $d$ by $ii^\sigma=i'(X+X^{-1})X^d$.
Finally, in both cases one verifies easily that, setting $\chi'\coloneqq i'^e\in k_\sigma[X]$, $i'$ resp.\ $\chi'$ is the characteristic polynomial of $\lambda+\lambda^{-1}$ resp.\ $\nu+\nu^{-1}$ in $k[\lambda]$ resp.\ $C$ over $k_\sigma$.

\emph{Case 3: The case that $i=X\pm 1$.} This case is extensively discussed by \cite[Proposition~2.2, 2.3, 2.4, and Theorem~3.1]{gonshawliebeckobrien2017unipotent}. Note the following fact, which we will use later: Assume $g$ is isotypic with many orthogonally indecomposable summands of this type. If $p\neq 2$, then again $g$ determines $f$ up to equivalence on all but at most one indecomposable summand, as is shown in \cite[Proposition~2.2, 2.3, 2.4]{gonshawliebeckobrien2017unipotent}. If $p=2$, then choosing coordinates such that $f$ is in the normal form of \cite[Theorem~3.1]{gonshawliebeckobrien2017unipotent}, we see that $f$ restricted to all but constantly many Jordan blocks is of the form $W(e)$.

\begin{remark}
	In each of the cases, when $\rchar(k)=p=2$, there exists a suitable quadratic form inducing $f$. However, we do not need its explicit form.
\end{remark}

\emph{The Frobenius normal form for elements $g\in G$.}
We wish to apply the same method as in Subsection~\ref{subsec:lin_cs}, for which we need an analog of the Frobenius normal form for elements $g\in G$.

Write $g=h\perp u=h\perp u_1\perp u_{-1}$, where $u_1,-u_{-1}$ are unipotent and $h$ has only eigenvalues different from $\pm 1$. This is possible by considering the Cases~1,~2, and~3 of indecomposables. 

We obtain a normal form for $h$ in the same way as the Frobenius normal form is obtained from the primary canonical form: In the first summand we collect all orthogonally indecomposable summands from Case~1 resp.\ Case~2 of the form $F((ii^\ast)^e)=F(i^e)\oplus F(i^{\ast e})$ with $i\in k[X]$ irreducible and non-self-dual resp.\ $F(i^e)$ with $i\in k[X]$ irreducible and self-dual (and $i\neq X\pm1$), and $e$ as large as possible. Then we split off this summand and proceed in the same way with the perpendicular complement.

For $u_1$ and $u_{-1}$ we use the normal form provided by \cite[Proposition~2.2, 2.3, 2.4, and Theorem~3.1]{gonshawliebeckobrien2017unipotent}.

We still need the following fact, which follows from the analysis of Cases~1,~2, and~3:
\begin{fact}\label{fct:ex_frm} 
Whenever $\chi\in k[X]$ is self-dual and is not divisible by $X\pm1$ in the bilinear case, then there exists a non-singular form $f$ (coming from a quadratic form $Q$ when $p=2$) which is preserved by $F(\chi)$ ($f$ is even unique up to linear equivalence). On the other hand, for all $e$, we have that there is a form $f$ (together with $Q$ when $p=2$) preserving $F((X\pm1)^e)^{\oplus 2}$.
\end{fact}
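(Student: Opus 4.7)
The plan is to reduce the first statement to the constructions already given in Cases~1 and~2 above, and to prove the second by producing an explicit hyperbolic form.

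For the first statement, I would factor $\chi = \prod_j \chi_j$ into pairwise coprime primary factors, so that by the Chinese remainder theorem $V = k[X]/(\chi) \cong \bigoplus_j k[X]/(\chi_j)$, and multiplication by $X$ acts on the $j$-th summand as $F(\chi_j)$. Self-duality $\chi = \chi^*$ forces the multiset $\{\chi_j\}$ to be stable under $\chi_j \mapsto \chi_j^*$, so its elements split into self-dual primaries $\psi = \psi^*$ and non-self-dual twins $\{i^e, i^{*e}\}$. The summand of a non-self-dual twin, namely $k[X]/(i^e) \oplus k[X]/(i^{*e})$, is precisely the setting of Case~1 and carries the canonical dual pairing; the summand of a self-dual primary $\psi$ --- which, thanks to the assumption $X \pm 1 \nmid \chi$ in the bilinear case, is of the form required by Case~2 (or by the unitary analogue with $\lambda^2 = 1$ treated in \cite{wall1963conjugacy} and \cite{schneider}) --- carries a form of the shape $\ell(uv^\alpha)$ for a suitable linear functional $\ell$. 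Taking the orthogonal direct sum of these pieces yields a non-singular form $f$ on $V$ preserved by $F(\chi)$, and its uniqueness up to linear equivalence follows from the uniqueness on each summand. When $\rchar(k) = 2$ and $G = \GO$, the same procedure applied to the quadratic forms of Cases~1 and~2 assembles a compatible $Q$.

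For the second statement, let $U = k[X]/((X \pm 1)^e)$. The image of $X$ in $U$ is a unit (its expansion in the basis $1, X \mp 1, \ldots, (X \mp 1)^{e-1}$ has constant term $\mp 1 \ne 0$), so the $k$-algebra automorphism $\alpha \colon U \to U$ sending $X \mapsto X^{-1}$ is well defined and of order two. Let $\ell \colon U \to k$ be the linear form extracting the coefficient of $(X \mp 1)^{e-1}$ in that basis; then $g(u, v) \defeq \ell(uv^\alpha)$ is a non-singular $k$-bilinear pairing on $U$ satisfying $g(Xu, Xv) = g(u, v)$, since $\alpha(X) = X^{-1}$. Define the hyperbolic form on $U \oplus U$ by
\[
  f\bigl((u_1, u_2), (v_1, v_2)\bigr) \defeq g(u_1, v_2) + \varepsilon\, g(v_1, u_2);
\]
this is non-singular, of the correct symmetry type (symmetric for $\varepsilon = 1$, alternating for $\varepsilon = -1$), and is invariant under $F((X \pm 1)^e)^{\oplus 2}$. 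When $\rchar(k) = 2$ and $\varepsilon = 1$, refine to the quadratic form $Q\bigl((u_1, u_2)\bigr) \defeq g(u_1, u_2)$, whose polarization is $f$.

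The main obstacle will be existence in Case~2: one must produce a linear functional $\ell \colon C = k[X]/(\psi) \to k$ with the twisted symmetry $\ell(u)^\sigma = \varepsilon\, \ell(u^\alpha)$ for which the associated form is non-singular. The map $\tau \colon C^\vee \to C^\vee$ defined by $\tau(\ell)(u) \defeq \ell(u^\alpha)^{\sigma^{-1}}$ is a $\sigma^{-1}$-semilinear involution (using $\alpha^2 = \sigma^2 = \id$), and the desired $\ell$ is a non-zero element of its $\varepsilon$-eigenspace at which the Frobenius trace form is non-degenerate. This is precisely where the degree and parity restrictions carved out in the discussion of Case~2 --- even degree of $i$ in the bilinear subcase, odd degree in the sesquilinear subcase --- are invoked, ensuring that the $\varepsilon$-eigenspace of $\tau$ meets the non-degenerate locus; the remaining unitary case with $\lambda^2 = 1$ is handled by an explicit anti-diagonal hermitian normal form on a single Jordan block.
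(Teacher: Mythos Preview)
Your proposal is correct and matches what the paper has in mind: the paper does not give a separate proof of this Fact but simply records that it ``follows from the analysis of Cases~1,~2, and~3''. Your factorisation of $\chi$ into primary pieces, pairing non-self-dual twins via the dual-pairing form of Case~1 and equipping self-dual primaries with the form $\ell(uv^\alpha)$ of Case~2, is precisely that analysis unpacked, and your identification of the existence of a suitable $\ell$ in the $\varepsilon$-eigenspace of $\tau$ as the only nontrivial step is accurate.

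For the second assertion your route differs slightly from the paper's. The paper would simply invoke the explicit normal forms of \cite{gonshawliebeckobrien2017unipotent} for unipotent blocks in the bilinear case (and the first part of the Fact already covers the unitary case, since there the restriction $X\pm 1\nmid\chi$ is absent). Your hyperbolic construction on $U\oplus U$ via $g(u,v)=\ell(uv^\alpha)$ and $f=(g,\varepsilon g)$ is a self-contained alternative that avoids the external reference; it is cleaner, and the verification you give (non-singularity of $g$ because $\ell$ does not vanish on the socle $(X\mp 1)^{e-1}U$, invariance from $\alpha(X)=X^{-1}$, and the quadratic refinement $Q(u_1,u_2)=g(u_1,u_2)$ in characteristic~$2$) is correct. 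One small remark: as written your $\alpha$ is $k$-linear, so the resulting $f$ is bilinear; for the unitary case you should either take $\alpha$ to be $\sigma$-semilinear (same formula, now a $k_\sigma$-algebra automorphism) or, more simply, note that the first part of the Fact already furnishes a hermitian form on a single copy of $U$, making the $\oplus 2$ statement redundant there.
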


\subsubsection{Proof of Theorem~\ref{thm:main_thm_fin_grp_lt} for the remaining groups}
We decompose $g\cong h\perp u_1\perp u_{-1}$ as described above. Hence, using Jensen's inequality, we only need to consider two cases: (a) $g=h$ and (b) $g=\pm u$, where $u$ is unipotent. Now we can apply the previous considerations to elements that are $F(\chi)$-isotypic for $\chi\in k[X]$ of degree $m$ which is not divisible by $X\pm 1$ in Case (a), and elements that are $F((X\pm1)^e)^{\oplus d}$-isotypic, where $d=1$ or $2$ and $m=de$ in Case (b). It is enough to consider these two isotypic cases. Again we derive an estimate for $m$ small and $m$ large. 

\emph{Estimate for small $m$.} In Case~(a), we have that $g\cong F(\chi)^{c_\chi}$ for $\chi$ self-dual and there is up to equivalence only one form $f$ preserved by $F(\chi)$ (which follows from the first part of Fact~\ref{fct:ex_frm} above). 
We can approximate the linear map $g$ by elements from $w(\gensubgrp{\overline{X}}\wr\Sym(r))$, where $\gensubgrp{\overline{X}}\subseteq R^\times=(k[X]/(\chi(X^c)))^\times$ and $c_\chi=rc+s$ for $0\leq s<r$, as in the estimate for small $m$ in Subsection~\ref{subsec:lin_cs}. But $\overline{X}\in R^\times$ preserves a non-singular form $f$ as $\chi(X^c)$ is again self-dual (which again follows from Fact~\ref{fct:ex_frm} above), so that also the group $\gensubgrp{\overline{X}}$ preserves such a form and we are done by uniqueness.

In Case~(b), we have $g\cong F((X\pm1)^e)^{\oplus dc_{d,e}}$. Since $m=de$ is small, $c_{d,e}$ is large and we can certainly assume it to be even. Observe that $F((X\pm1)^e)^{\oplus 2}$ always supports a non-singular form, so that $F((X^c\pm1)^e)^{\oplus 2}$ will do as well (by Fact~\ref{fct:ex_frm}). Hence we can use the same trick as in Case~(a) and use Fact~\ref{fct:ex_frm} in the unitary case and \cite[Propositions~2.3,~2.4, and Theorem~3.1]{gonshawliebeckobrien2017unipotent} in the bilinear case, which says that the form $f$ is essentially determined by $g$ up to a constant number of summands $F((X\pm1)^e)^{\oplus d}$ (namely, with the notation used there, most of its blocks will be $U(e)$ in the unitary case and $V_1(e)$ or $W(e)$ in the bilinear case).

\emph{Estimate for large $m$.} In this case, we assume that $g\cong F(\chi)$ in Case~(a) (so $c_\chi=1$) and $g\cong F((X\pm1)^e)^{\oplus d}$ for $d=1$ or $2$. Here we want to apply the following simple fact.

\begin{lemma}\label{lem:alm_inv_tot_sing_subsp_approx}
	Let $C>0$ be a fixed constant. Assume that $V=X\oplus Y\oplus Z$, where $X$ and $Y$ are totally isotropic, $n-2\dim(X),n-2\dim(Y)\leq C$, i.e., $X$ and $Y$ are close to a Witt subspace, and $\codim_X(X\cap X.g),\codim_Y(Y\cap Y.g)\leq C$, i.e., $X$ and $Y$ are almost $g$-invariant. Then $g$ can be approximated by word values.
\end{lemma}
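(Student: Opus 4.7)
The plan is to reduce the lemma to the linear case handled in Subsection~\ref{subsec:lin_cs}, using that the simultaneous stabilizer in $G$ of a pair of complementary maximal totally isotropic subspaces (a \emph{polarization}) is a Levi subgroup of type $\GL$.

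\textbf{Step 1: enlarge $X,Y$ to genuine Lagrangians.} Each of $X$ and $Y$ sits at codimension $\leq C$ in a maximal totally isotropic subspace of $V$, so by adjusting $X, Y$ (and correspondingly $Z$) within $V$, I may at the price of a rank-$O(C)$ perturbation of $g$ assume that $X, Y$ are maximal totally isotropic and $Z$ is the anisotropic kernel of $(V,f)$, whose dimension is bounded by a constant depending only on the family of classical groups under consideration.

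\textbf{Step 2: force exact $g$-invariance of $X$ and $Y$.} Because $X$ is almost $g$-invariant, the totally isotropic subspace $Xg$ differs from $X$ only in codimension $O(C)$. By Witt's extension theorem I can find $u_1\in G$ with $u_1(Xg)=X$ and $\rk(u_1-\id) = O(C)$; set $g_1\defeq u_1 g$, which preserves $X$. Applying Witt's theorem inside the parabolic $\operatorname{Stab}_G(X)$ to the partial isometry sending $Yg_1$ to $Y$ (again only an $O(C)$-dimensional correction), I obtain $u_2 \in \operatorname{Stab}_G(X)$ close to the identity such that $g_2 \defeq u_2 g_1$ preserves both $X$ and $Y$. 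In total, $d_{\rk}(g,g_2) = O(C/n)$.

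\textbf{Step 3: identify the Levi and apply the linear case.} The simultaneous stabilizer of $X$ and $Y$ in $G$ is a Levi subgroup $L\cong\GL(X)$, embedded by $h\mapsto h\oplus h^{-*}$ on $X\oplus Y$ and acting trivially on $Z$, where $h^{-*}$ denotes the form-adjoint inverse of $h$. By construction $g_2\in L$ corresponds to some $h\in\GL(X)$. Invoke the $\GL$-case of Theorem~\ref{thm:main_thm_fin_grp_lt} from Subsection~\ref{subsec:lin_cs} to find $a,b\in\GL(X)$ with
\[
d_{\rk}\!\bigl(h,\,w(a,b)\bigr)\leq d_{\rm it}\!\bigl(\sqrt{2/\dim X}\bigr).
\]
Pushing $(a,b)$ through the embedding $\GL(X)\hookrightarrow L\subseteq G$ yields the word value $w(a\oplus a^{-*},\,b\oplus b^{-*})\in w(G)$, which approximates $g_2$ in normalized rank distance to the same order (since the embedding is a rank-preserving block inclusion up to the bounded anisotropic kernel). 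Combined with the $O(C/n)$ error from Steps~1--2, this yields the required word-value approximation of $g$.

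\textbf{Main obstacle.} The delicate point is Step~2: the two successive Witt-type perturbations must be done \emph{inside} $G$ (not merely in $\GL(V)$), so the partial isometries being extended have to be verified to respect the relevant bilinear, sesquilinear, or (in the $p=2$ orthogonal case) quadratic form. This requires a small case distinction across the four families, but in each case the fact that $X$ and $Y$ are totally isotropic, plus the freedom provided by Witt's theorem inside the parabolic $\operatorname{Stab}_G(X)$, makes the construction routine. Steps~1, 3, and~4 are then direct consequences of structure theory and the already established linear case.
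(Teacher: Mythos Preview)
Your argument is correct and reaches the same conclusion, but by a genuinely different route than the paper's. The paper does not perturb $g$ into the Levi at all; instead it (i) \emph{shrinks} $X$ and $Y$ slightly so that $f|_{X\times Y}$ becomes a perfect pairing, (ii) picks any $g'\in\GL(X)$ agreeing with $g$ on the large subspace $X\cap Xg^{-1}$, (iii) takes a word value $h\in w(\GL(X))$ close to $g'$ and extends it to $V$ via $h\mapsto h\oplus h^{-*}$ and Witt on the complement, and then (iv) bounds $\rk((g-h)|_Y)$ directly from the form identity $f(x(g^{-1}-h^{-1}),y)=f(x,y(g-h))$, which forces $Y(g-h)\cap Y$ to lie in the $Y$-annihilator of a large subspace of $X$. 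Your Steps~1--2 trade this duality estimate for two controlled Witt moves pushing $g$ into $\Stab_G(X)\cap\Stab_G(Y)$; this is more structural (one literally lands in a Levi of type $\GL$), but, as you anticipate, the second move is the delicate one. The cleanest way to carry it out is not a second Witt extension ``inside the parabolic'' but simply the Levi decomposition: once $g_1\in\Stab_G(X)=L\ltimes U_P$, write $g_1=\ell u$ and observe that the unipotent part satisfies $\rk(u-\id)=\rk(s)=O(C)$ because $Y\cap Yg_1$ equals $\ker s$; then $g_2\defeq\ell\in L$ already does the job. Either way the reduction to Subsection~\ref{subsec:lin_cs} and the resulting bounds are identical. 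Two minor points: you refer to a Step~4 that does not exist, and in Step~3 the simultaneous stabilizer of $X$ and $Y$ is $\GL(X)\times G(Z)$ rather than just $\GL(X)$, though since $\dim Z\leq 2$ this costs only a further $O(1/n)$.
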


\begin{proof}
	Note that $\dim(Y^\perp)=n-\dim(Y)\leq\frac{n+C}{2}$, so that $\dim(X\cap Y^\perp)\leq C$. Hence we can find $X'\leq X$ of dimension at least $\dim(X)-C\geq\frac{n-3C}{2}$ such that $\rest{f}_{X'\times Y}$ is separating in $X'$, i.e., the ma¸p $x\mapsto f(x,\bullet)$ is injective on $X'$. Hence, choosing $Y'\leq Y$ which induces all $\sigma$-semilinear functionals $X'^\ast$, we can assume by passing from $X$ to $X'$ and $Y$ to $Y'$ that $\rest{f}_{X\times Y}$ is non-degenerate, so in particular $\dim(X)=\dim(Y)$.
	
	Now let $g'$ be an extension of $\rest{g}_{X\cap X.g^{-1}}\colon X\cap X.g^{-1}\to X\cap X.g$ to an invertible linear map $X\to X$. By Subsection~\ref{subsec:lin_cs}, we find $h=w(x,y)\in w(\GL(X))$ such that $d_{\rk,X}(g',h)\leq d(1/\dim(X))\leq d(\frac{2}{n-C})$.
	
	We extend $h$ to all of $V$ as follows: Extend $x,y\in\GL(X)$ to $Y$ by taking their dual on $Y$, so that they fix $X$ and $Y$ setwise, and then extend them to $V$ with Witt's lemma. Then set $h\coloneqq w(x,y)$ on all of $V$. Write $Y.(g-h)=(Y.(g-h)\cap Y)\oplus W$. Then, since $h$ fixes $Y$, $W$ is injectively mapped by the natural map $Y.(g-h)\to (Y+Y.g)/Y$, but the last quotient, by assumption, had dimension at most $C$, so that $\dim(W)\leq C$. Now $f(x.(g^{-1}-h^{-1}),y)=f(x,y.(g-h))=0$ for $x\in X$, $y\in Y$, when $x\in\ker(g^{-1}-h^{-1})=(\ker(g-h)).h$ as $g^{-1}-h^{-1}=h^{-1}(h-g)g^{-1}$.
	But the vector space of all such $x\in X$ has dimension at least $\dim(X)(1-d(1/\dim(X))-C/\dim(X))$, which follows from the above estimate on $d_{\rk,X}(g',h)$ and the fact that $g$ and $g'$ agree on $X\cap X.g^{-1}$.
	
	Hence the dimension of $Y.(g-h)\cap Y$ is at most $\dim(X)(d(1/\dim(X))+C/\dim(X)))$, so that, using $\dim(W)\leq C$, the dimension of $Y.(g-h)$ is at most $\dim(X)(d(1/\dim(X))+2C/\dim(X))$. Hence the rank of $g-h$ is small on $X$ and $Y$, so is small on $V$. This ends the proof.
\end{proof}

Now $V$ is the direct sum of orthogonally indecomposable modules, each type of which occurs at most once. Subsequently, we construct subspaces $X$, $Y$, and $Z$ with the property required by Lemma~\ref{lem:alm_inv_tot_sing_subsp_approx}. Write $\mathcal S_j$ ($j=1,2,3$) for the orthogonally indecomposable summands of $V$ described in Case~$j$ from above.

For each orthogonally indecomposable summand $S=U\oplus U^\ast\in\mathcal S_1$ of $V$ as in Case~1, set $X_S\coloneqq U$, $Y_S\coloneqq U^\ast$, and $Z_S\coloneqq 0$. Then define $X_1\coloneqq\bigoplus_{S\in\mathcal S_1}{X_S}$, $Y_1\coloneqq\bigoplus_{S\in\mathcal S_1}{Y_S}$, and $Z_1\coloneqq 0$. Define $\chi_1$ by the fact that all the summands from Case~1 grouped together act as $F(\chi_1)$.

In Case~2, for each $S=U\in\mathcal S_2$ we have that $g$ acts as $F(\chi_S)=F(i_S^{e_S})$ on $S$, where $i_S$ is irreducible of degree $d_S$ and $\chi_S$ is of degree $m_S=d_Se_S$. Set $\chi_2\coloneqq\prod_{S\in\mathcal S_2}{\chi_S}\in k[X]$ and set $m_2\coloneqq\deg(\chi_2)$.
The form $f$ on $\bigoplus\mathcal S_2$ is given by $(u,v)\mapsto \ell(uv^\alpha)=\sum_{S\in\mathcal S_2}{\ell_S(u_Sv_S^{\alpha_S})}$, where $u=(u_S)_{S\in\mathcal S_2}$, $v=(v_S)_{S\in\mathcal S_2}$, and $\ell_S,\alpha_S$ ($S\in\mathcal S_2$) are as described in Case~2 above. Set $\nu\coloneqq(\nu_S)_{S\in\mathcal S_2}\in C\coloneqq\prod_{S\in\mathcal S_2}{C_S}$, $\alpha\coloneqq(\alpha_S)_{S\in\mathcal S_2}$ (cf.\ Case~2), and recall that $g$ acts on $V\cong C$ as multiplication by $\nu$.

That the vectors $v,\ldots,v.g^{l-1}$, for $v\in\bigoplus\mathcal S_2$, span a totally singular subspace hence means that $\ell(\N_\alpha(v)\nu^j)=0$ for $j=0,\ldots,l-1$, where we write $\N_\alpha(v)=vv^\alpha$. Write $u=\N_\alpha(v)$. We demonstrate how to find such a vector $v$ only in the orthogonal case when $p\neq 2$ (for simplicity). The other cases are similar. So assume we are in the orthogonal case. Define $i_S',\chi_S'\in k[X]$ for $S\in\mathcal S_2$ as in Case~2 above, and $\chi_2'$ by $\chi_2'\coloneqq\prod_{S\in\mathcal S_2}{\chi_S'}\in k[X]$, so that $\deg(\chi_2')=m_2/2$. Set $l\coloneqq\deg(\chi_2')-1$. Note that $C_\alpha\coloneqq\set{c\in C}[c^\alpha=c]$ is a $k$-subalgebra of $C$ of $k$-dimension $\deg(\chi_2')$ and $\ell$ descends to a $k$-linear functional $C_\alpha\to k$. The minimal polynomial of $\nu+\nu^{-1}\in C_\alpha$ over $k$ is $\chi_2'$, so that $C_\alpha=k[\nu+\nu^{-1}]\cong k[X]/(\chi_2')$. This implies that $(\nu+\nu^{-1})^j$ and hence $\nu^j+\nu^{-j}$ ($j=0,\ldots,l-1$) span an $l$-dimensional $k$-subspace of $C_\alpha$ and are hence linearly independent.

Now note that $\ell(\nu^j u)=0$ is equivalent to $\ell(\nu^{-j}u)=0$ when $u\in C_\alpha$, as $\ell$ has the property $\ell(x)=\ell(x^\alpha)$ and $\nu^\alpha=\nu^{-1}$.
But for $j\leq l-1$, $\nu^j+\nu^{-j}\neq 0$ from the previous observation, so that the two preceding equations are equivalent to $\ell((\nu^j+\nu^{-j})u)=0$. Now, from the construction of $\ell$, one sees that $f$ restricts to $C_\alpha$ as a non-singular form (see \cite{schneider2019phd}; here we use $p\neq 2$). Hence $R\coloneqq\gensubsp{(\nu+\nu^{-1})^j}[j=0,\ldots,l-1]^\perp\cap C_\alpha$ is one-dimensional. Let $0\neq u\in R$ be a generator of this subspace. We show that $u$ is a unit in $C_\alpha\subset C$. Assume the contrary, namely that $u_S\in(i_S)$ for some $S\in\mathcal S_2$. Then the $k$-linear functional $C_\alpha\to k$ which is zero on all $C_{S',\alpha_{S'}}$ ($S\neq S'\in\mathcal S_2$) and which equals $x\mapsto\ell(r i_S'^{e_S-1}(\nu_S+\nu_S^{-1})x)$ for $r\in C_{S,\alpha_S}^\times$ arbitrary must be a linear combination of the functionals $C_\alpha\to k$; $x\mapsto\ell((\nu+\nu^{-1})^j x)$ ($j=0,\ldots,l-1$). This means that there is a polynomial $s\in k[X]$ of degree $l-1=\deg(\chi_2')-2$ such that $s(\nu+\nu^{-1})$ is zero on $C_{S',\alpha_{S'}}$ for $S'\neq S$ and lies in $(i_S'^{e_S-1}(\nu_S+\nu^{-1}_S))=(i_S^{e_S-1})\cap C_{S,\alpha_S}$. This means that $\chi'_{S'}\mid s$ for $S'\neq S$ and $i_S'^{e_S-1}\mid s$. Hence, since the polynomials $i_T'\in k[X]$ ($T\in\mathcal S_2$) are irreducible and pairwise coprime, to achieve an arbitrary $r$, we hence need that $s=(\chi'_2/i'_S)s_0$, where $s_0\in k[X]$ is arbitrary of degree less than $d_S/2$. Hence we would need in the worst case that $\deg(s)=\deg(\chi_2')-1$, which is a contradiction. So we have that $u$ is a unit in $C$ and every unit is in the image of the norm $\N_\alpha\colon C\to C_\alpha$, since $k$ is a finite field (see~\cite{schneider2019phd} for the details), so that we find an appropriate vector $v\in C$ such that $\N_\alpha(v)=u$. Also, since $v$ is a unit in $C$, the space $X_2\coloneqq\gensubsp{v.g^j}[j=0,\ldots,\deg(\chi_2')-2]$ is actually of dimension $l$. Hence, setting $Y_2\coloneqq X_2.g^l$ and choosing $Z_3$ appropriately, we are done in this case.

Now in Case (a), we have that $g$ acts as $F(\chi)=F(\chi_1\chi_2)$. Setting $X\coloneqq X_1\oplus X_2$, $Y\coloneqq Y_1\oplus Y_2$, and $Z\coloneqq Z_1\oplus Z_2$ (and cutting of a further dimension if necessary when $p=2$ by restricting to $Q=0$), we can apply Lemma~\ref{lem:alm_inv_tot_sing_subsp_approx} to $F(\chi)$.

In Case (b), when $S\in\mathcal S_3$ one can easily extract almost invariant isotropic subspaces $X_S,Y_S$ and a space $Z_S$ from the explicit representations given in \cite[Propositions~2.2, 2.3,~2.4, and Theorem~3.1]{gonshawliebeckobrien2017unipotent} and sum them up as in Case (a).

\emph{Final proof.} The final proof is now identical with the one given in the last two paragraphs of Subsection~\ref{subsec:lin_cs}.

\section*{Acknowledgments}

We want to thank Vadim Alekseev and Sebastian Manecke for interesting discussions. The results presented in this paper are part of the PhD project \cite{schneider2019phd} of the first author. This research was supported in part by the ERC Consolidator Grant No.\ 681207.

\begin{bibdiv}
\begin{biblist}
\bib{avnigelanderkassabovshalev2013word}{article}{
	title={Word values in $p$-adic and adelic groups},
	author={Avni, Nir},
	author={Gelander, Tsachik},
	author={Kassabov, Martin},
	author={Shalev, Aner},
	journal={Bulletin of the London Mathematical Society},
	volume={45},
	number={6},
	pages={1323--1330},
	year={2013},
	publisher={Wiley Online Library}
}
\bib{bandmangariongrunewald2012surjectivity}{article}{
   author={Bandman, Tatiana},
   author={Garion, Shelly},
   author={Grunewald, Fritz},
   title={On the surjectivity of Engel words on ${\rm PSL}(2,q)$},
   journal={Groups, Geometry, and Dynamics},
   volume={6},
   date={2012},
   number={3},
   pages={409--439}
}
\bib{borel1983free}{article}{
	title={On free subgroups of semisimple groups},
	author={Borel, Armand},
	journal={L'Enseignement Math{\'e}matique},
	volume={29},
	number={1-2},
	pages={151--164},
	year={1983}
}
\bib{chowla1934}{article}{
	author={Chowla, Sarvadaman},
	title={The least prime congruent to one modulo $n$},
	journal={Journal of the Indian Mathematical Society},
	volume={1},
	number={2},
	year={1934},
	pages={1--3}
}
\bib{dickslinnell2007l2}{article}{
	title={$L^2$-Betti numbers of one-relator groups},
	author={Dicks, Warren},
	author={Linnell, Peter},
	journal={Mathematische Annalen},
	volume={337},
	number={4},
	pages={855--874},
	year={2007},
	publisher={Springer Science \& Business Media}
}
\bib{doughertymycielski1999representations}{article}{
	title={Representations of infinite permutations by words (II)},
	author={Dougherty, Randall},
	author={Mycielski, Jan},
	journal={Proceedings of the American Mathematical Society},
	volume={127},
	number={8},
	pages={2233--2243},
	year={1999}
}
\bib{dowerkthom2019bounded}{article}{
	title={Bounded normal generation and invariant automatic continuity},
	author={Dowerk, Philip},
	author={Thom, Andreas},
	journal={Advances in Mathematics},
	volume={346},
	pages={124--169},
	year={2019},
	publisher={Elsevier}
}
\bib{elkasapythom2014goto}{article}{
	title={About Got{\^o}'s method showing surjectivity of word maps},
	author={Elkasapy, Abdelrhman},
	author={Thom, Andreas},
	journal={Indiana University Mathematics Journal},
	volume={63},
	date={2014},
	number={5},
	pages={1553--1565}
}
\bib{fox1953free}{article}{
	title={Free differential calculus. I:\ Derivation in the free group ring},
	author={Fox, Ralph},
	journal={Annals of Mathematics},
	volume={57},
	number={3},
	pages={547--560},
	year={1953}
}
\bib{frickeklein1965vorlesungen}{book}{
	title={Vorlesungen {\"u}ber die Theorie der automorphen Funktionen. Band 1: Die gruppentheoretischen Grundlagen. Band II: Die funktionentheoretischen Ausf{\"u}hrungen und die Anwendungen},
	author={Fricke, Robert},
	author={Klein, Felix},
	series={Bibliotheca Mathematica Teubneriana},
	volume={4},
	publisher={Johnson Reprint Corporation, New York; B.\ G.\ Teubner Verlagsgesellschaft, Stuttgart},
	date={1965},
	pages={Band I: xiv+634 pp.; Band II: xiv+668}
}
\bib{fultonharris1991representation}{book}{
   author={Fulton, William},
   author={Harris, Joe},
   title={Representation theory},
   series={Graduate Texts in Mathematics},
   volume={129},
   publisher={Springer Science \& Business Media},
   date={1991},
   pages={xvi+551},
}
\bib{garionshalev2009commutator}{article}{
	title={Commutator maps, measure preservation, and $T$-systems},
	author={Garion, Shelly},
	author={Shalev, Aner},
	journal={Transactions of the American Mathematical Society},
	volume={361},
	number={9},
	pages={4631--4651},
	year={2009}
}
\bib{goldman2009trace}{article}{
	author={Goldman, William},
	title={Trace coordinates on Fricke spaces of some simple hyperbolic
		surfaces},
	conference={
		title={Handbook of Teichm\"{u}ller theory. Vol. II},
	},
	book={
		series={IRMA Lectures in Mathematics and Theoretical Physics},
		volume={13},
		publisher={European Mathematical Society, Z\"{u}rich},
	},
	date={2009},
	pages={611--684}
}
\bib{gonshawliebeckobrien2017unipotent}{article}{
	title={Unipotent class representatives for finite classical groups},
	author={Gonshaw, Samuel},
	author={Liebeck, Martin},
	author={O'Brien, Eamonn},
	journal={Journal of Group Theory},
	volume={20},
	number={3},
	pages={505--525},
	year={2017},
	publisher={De Gruyter}
}
\bib{gordeevkunyavskiiplotkin2016word}{article}{
	title={Word maps and word maps with constants of simple algebraic groups},
	author={Gordeev, Nikolai},
	author={Kunyavski{\u\i}, Boris},
	author={Plotkin, Eugene},
	journal={Doklady Mathematics},
	volume={94},
	number={3},
	pages={632--634},
	year={2016},
	organization={Springer Science \& Business Media}
}
\bib{gordeevkunyavskiiplotkin2018word}{article}{
	title={Word maps on perfect algebraic groups},
	author={Gordeev, Nikolai},
	author={Kunyavski{\u\i}, Boris},
	author={Plotkin, Eugene},
	journal={International Journal of Algebra and Computation},
	volume={28},
	number={8},
	pages={1487--1515},
	year={2018},
	publisher={World Scientific}
}
\bib{gulweiss2017dimension}{article}{
	title={On the dimension of matrix embeddings of torsion-free nilpotent groups},
	author={Gul, Funda},
	author={Wei{\ss}, Armin},
	journal={Journal of Algebra},
	volume={477},
	pages={516--539},
	year={2017},
	publisher={Elsevier}
}
\bib{guralnickliebeckobrienshalevtiep2018surjective}{article}{
	title={Surjective word maps and {B}urnside's $p^a q^b$ theorem},
	author={Guralnick, Robert},
	author={Liebeck, Martin}, 
	author={O'Brien, Eamonn},
	author={Shalev, Aner},
	author={Tiep, Pham Huu},
	journal={Inventiones mathematicae},
	volume={213},
	number={2},
	pages={589--695},
	year={2018},
	publisher={Springer Science \& Business Media}
}
\bib{guralnicktiep2015effective}{article}{
	title={Effective results on the Waring problem for finite simple groups},
	author={Guralnick, Robert},
	author={Tiep, Pham Huu},
	journal={American Journal of Mathematics},
	volume={137},
	number={5},
	pages={1401--1430},
	year={2015},
	publisher={The Johns Hopkins University Press}
}
\bib{horowitz1972characters}{article}{
	author={Horowitz, Robert},
	title={Characters of free groups represented in the two-dimensional
		special linear group},
	journal={Communications on Pure and Applied Mathematics},
	volume={25},
	date={1972},
	pages={635--649},
}
\bib{huilarsenshalev2015waring}{article}{
	title={The Waring problem for Lie groups and Chevalley groups},
	author={Hui, Chun Yin},
	author={Larsen, Michael},
	author={Shalev, Aner},
	journal={Israel Journal of Mathematics},
	volume={210},
	number={1},
	pages={81--100},
	year={2015},
	publisher={Springer Science \& Business Media}
}
\bib{huppert2013endliche}{book}{
	title={Endliche Gruppen I},
	author={Huppert, Bertram},
	volume={134},
	year={2013},
	publisher={Springer Science \& Business Media}
}
\bib{jamborliebeckobrien2013some}{article}{
	title={Some word maps that are non-surjective on infinitely many finite simple groups},
	author={Jambor, Sebastian},
	author={Liebeck, Martin},
	author={O'Brien, Eamonn},
	journal={Bulletin of the London Mathematical Society},
	volume={45},
	number={5},
	pages={907--910},
	year={2013},
	publisher={Wiley Online Library}
}
\bib{klyachkothom2017new}{article}{
	title={New topological methods to solve equations over groups},
	author={Klyachko, Anton},
	author={Thom, Andreas},
	journal={Algebraic \& Geometric Topology},
	volume={17},
	number={1},
	pages={331--353},
	year={2017},
	publisher={Mathematical Sciences Publishers}
}
\bib{larsen1997often}{article}{
	title={How often is a permutation an $n$'th power?},
	author={Larsen, Michael},
	journal={arXiv preprint math/9712223},
	year={1997}
}
\bib{larsen2004word}{article}{
	title={Word maps have large image},
	author={Larsen, Michael},
	journal={Israel Journal of Mathematics},
	volume={139},
	number={1},
	pages={149--156},
	year={2004},
	publisher={Springer Science \& Business Media}
}
\bib{larsenshalev2016distribution}{article}{
	title={On the distribution of values of certain word maps},
	author={Larsen, Michael},
	author={Shalev, Aner},
	journal={Transactions of the American Mathematical Society},
	volume={368},
	number={3},
	pages={1647--1661},
	year={2016}
}
\bib{larsenshalev2009word}{article}{
	title={Word maps and Waring type problems},
	author={Larsen, Michael},
	author={Shalev, Aner},
	journal={Journal of the American Mathematical Society},
	volume={22},
	number={2},
	pages={437--466},
	year={2009}
}
\bib{larsenshalev2017words}{article}{
	title={Words, Hausdorff dimension and randomly free groups},
	author={Larsen, Michael},
	author={Shalev, Aner},
	journal={Mathematische Annalen},
	pages={1--19},
	year={2017},
	publisher={Springer Science \& Business Media}
}
\bib{larsenshalevtiep2012waring}{article}{
	title={Waring problem for finite quasisimple groups},
	author={Larsen, Michael},
	author={Shalev, Aner},
	author={Tiep, Pham Huu},
	journal={International Mathematics Research Notices},
	volume={2013},
	number={10},
	pages={2323--2348},
	year={2012},
	publisher={Oxford University Press}
}
\bib{liebeckshalev2001diameters}{article}{
	title={Diameters of finite simple groups: sharp bounds and applications},
	author={Liebeck, Martin},
	author={Shalev, Aner},
	journal={Annals of Mathematics},
	pages={383--406},
	year={2001},
	publisher={JSTOR}
}
\bib{liebeck2015width}{article}{
	title={Width questions for finite simple groups},
	author={Liebeck, Martin},
	journal={Groups St Andrews 2013},
	volume={422},
	pages={51},
	year={2015},
	publisher={Cambridge University Press}
}
\bib{linnik1944least}{article}{
	title={On the least prime in an arithmetic progression. I.~The basic theorem},
	author={Linnik, Yuri Vladimirovich},
	journal={Recueil Math{\'e}matique. Nouvelle S{\'e}rie},
	volume={15},
	number={2},
	pages={139--178},
	year={1944}
}
\bib{lubotzky2014images}{article}{
	title={Images of word maps in finite simple groups},
	author={Lubotzky, Alexander},
	journal={Glasgow Mathematical Journal},
	volume={56},
	number={2},
	pages={465--469},
	year={2014},
	publisher={Cambridge University Press}
}
\bib{lueck2002L2}{book}{
   author={L{\"u}ck, Wolfgang},
   title={$L^2$-invariants: theory and applications to geometry and
   $K$-theory},
   volume={44},
   publisher={Springer Science \& Business Media},
   date={2002},
   pages={xvi+595}
}
\bib{magnus1980rings}{article}{
	author={Magnus, Wilhelm},
	title={Rings of Fricke characters and automorphism groups of free groups},
	journal={Rings of Fricke characters and automorphism groups of free groups},
	volume={170},
	date={1980},
	number={1},
	pages={91--103}
}
\bib{nickel2006matrix}{article}{
	title={Matrix representations for torsion-free nilpotent groups by Deep Thought},
	author={Nickel, Werner},
	journal={Journal of Algebra},
	volume={300},
	number={1},
	pages={376--383},
	year={2006},
	publisher={Elsevier}
}
\bib{nikolovschneiderthom2017some}{article}{
	title={Some remarks on finitarily approximable groups},
	author={Nikolov, Nikolay},
	author={Schneider, Jakob},
	author={Thom, Andreas},
	journal={Journal de l'{\'E}cole polytechnique -- Math{\'e}matiques},
	volume={5},
	year={2017},
	publisher={Journal de l'{\'E}cole Polytechnique}
}
\bib{pestov2008hyperlinear}{article}{
	title={Hyperlinear and sofic groups: a brief guide},
	author={Pestov, Vladimir},
	journal={Bulletin of Symbolic Logic},
	volume={14},
	number={04},
	pages={449--480},
	year={2008},
	publisher={Cambridge University Press}
}
\bib{schneider2019phd}{thesis}{
	author={Schneider, Jakob},
	title={On ultraproducts of compact quasisimple groups},
	type={PhD thesis}
	school={TU Dresden},
	year={2019},
	status={to appear on \url{http://www.qucosa.de}}
}
\bib{segal2005polycyclic}{book}{
	title={Polycyclic groups},
	author={Segal, Daniel},
	number={82},
	year={2005},
	publisher={Cambridge University Press}
}
\bib{stolzthom2014lattice}{article}{
	title={On the lattice of normal subgroups in ultraproducts of compact simple groups},
	author={Stolz, Abel},
	author={Thom, Andreas},
	journal={Proceedings of the London Mathematical Society},
	volume={108},
	number={1},
	pages={73--102},
	year={2014},
	publisher={Oxford University Press}
}
\bib{thangaduraivatwani2011least}{article}{
	title={The least prime congruent to one modulo $n$},
	author={Thangadurai, Ravindranathan},
	author={Vatwani, Akshaa},
	journal={American Mathematical Monthly},
	volume={118},
	number={8},
	pages={737--742},
	year={2011},
	publisher={Mathematical Association of America}
}
\bib{thom2013convergent}{article}{
   author={Thom, Andreas},
   title={Convergent sequences in discrete groups},
   journal={Canadian Mathematical Bulletin},
   volume={56},
   date={2013},
   number={2},
   pages={424--433}
}  
\bib{thomwilson2018some}{article}{
	title={Some geometric properties of metric ultraproducts of finite simple groups},
	author={Thom, Andreas},
	author={Wilson, John},
	journal={Israel Journal of Mathematics},
	volume={227},
	number={1},
	pages={113--129},
	year={2018},
	publisher={Springer Science \& Business Media}
}
\bib{traina1980trace}{article}{
	title={Trace polynomial for two-generator subgroups of ${\rm SL}(2,\mathbb{C})$},
	author={Traina, Charles R.},
	journal={Proceedings of the American Mathematical Society},
	volume={79},
	number={3},
	pages={369--372},
	year={1980}
}
\bib{vogt1889sur}{article}{
	author={Vogt, H.},
	title={Sur les invariants fondamentaux des \'{e}quations diff\'{e}rentielles
		lin\'{e}aires du second ordre},
	journal={Annales Scientifiques de l'\'{E}cole Normale Sup\'{e}rieure. Troisi\`eme S\'{e}rie},
	volume={6},
	date={1889},
	pages={3--71}
}
\bib{wall1963conjugacy}{article}{
	title={On the conjugacy classes in the unitary, symplectic and orthogonal groups},
	author={Wall, G.\ E.\ },
	journal={Journal of the Australian Mathematical Society},
	volume={3},
	number={1},
	pages={1--62},
	year={1963},
	publisher={Cambridge University Press}
}
\bib{wilson2009finite}{book}{
	title={The finite simple groups},
	author={Wilson, Robert},
	volume={251},
	year={2009},
	publisher={Springer Science \& Business Media}
}
\end{biblist}
\end{bibdiv}
\end{document}